\documentclass[pdflatex,sn-mathphys-num]{sn-jnl}

\usepackage{amsmath,amssymb,amsfonts}
\usepackage[title]{appendix}

\hypersetup{
  pdftitle={Exact Lagrangian Realization and Robust Strain Sensing in
    Incompressible Flow},
  pdfauthor={Hao Huang},
  pdfsubject={Incompressible Euler and Navier--Stokes equations},
  pdfkeywords={exact Lagrangian realization, deformation gradient,
    Beltrami fields, geometric control, strain sensing}
}

\theoremstyle{thmstyleone}
\newtheorem{theorem}{Theorem}[section]
\newtheorem{lemma}[theorem]{Lemma}
\newtheorem{proposition}[theorem]{Proposition}
\newtheorem{corollary}[theorem]{Corollary}
\theoremstyle{thmstylethree}
\newtheorem{definition}[theorem]{Definition}
\theoremstyle{thmstyletwo}
\newtheorem{remark}[theorem]{Remark}

\newcommand{\R}{\mathbb R}
\newcommand{\T}{\mathbb T}
\newcommand{\Sym}{\operatorname{Sym}}
\newcommand{\Symz}{\operatorname{Sym}_0}
\newcommand{\tr}{\operatorname{tr}}
\newcommand{\Id}{\operatorname{Id}}
\newcommand{\SL}{\operatorname{SL}}
\newcommand{\esssup}{\operatorname*{ess\,sup}}
\newcommand{\ip}[2]{\left\langle #1,#2\right\rangle}
\newcommand{\norm}[1]{\left\lVert #1\right\rVert}
\newcommand{\abs}[1]{\left|#1\right|}
\newcommand{\sphere}{\mathbb S}

\begin{document}

\title[Exact Lagrangian Realization]{Exact Lagrangian Realization and Robust
Strain Sensing in Incompressible Flow}

\author*[1,2]{\fnm{Hao} \sur{Huang}}\email{huanghao92@skku.edu}

\affil*[1]{\orgdiv{Department of Computer Science and Engineering},
  \orgname{Sungkyunkwan University},
  \orgaddress{\city{Suwon-si}, \postcode{16419},
  \state{Gyeonggi-do}, \country{Republic of Korea}}}

\affil[2]{\orgdiv{School of Mathematics and Statistics},
  \orgname{Linyi University},
  \orgaddress{\city{Linyi}, \postcode{276000}, \country{China}}}

\abstract{We prove that the full group \(\SL(3,\R)\) occurs as the set of
one-particle deformation gradients of periodic, unforced, single-shell
solutions of both the three-dimensional Euler and Navier--Stokes equations.
More precisely, given a particle label \(p\in\T^3\), a time \(T>0\), and
\(F_*\in\SL(3,\R)\), every sufficiently large odd integer \(N\) admits a
real-analytic curl eigenfield \(W_N\) with
\(\operatorname{curl}W_N=NW_N\). With an explicit scalar amplitude, \(W_N\)
yields a steady Euler solution; with an explicit exponentially decaying
amplitude, it yields a Navier--Stokes solution for any positive viscosity, and
in both cases \(\nabla_aX(p,T)=F_*\). The lifted particle trajectory is an
embedded analytic arc with nowhere-vanishing velocity. The construction
combines global trace-free symmetric-matrix control on \(\SL(3,\R)\), a Beltrami Cauchy
problem along the controlled arc, Runge approximation, inverse localization on
the torus, and a finite-dimensional endpoint correction.

We also classify finite material-direction systems that determine every
trace-free strain after arbitrary volume-preserving deformation. In dimension
\(n\), this congruence-robust property holds exactly when the associated
rank-one projectors span \(\Sym(n)\); hence the sharp number of scalar channels
is \(n(n+1)/2\). In dimension three, among minimal systems the undeformed
outer-product lower bound is maximized exactly by the six axes of a regular
icosahedron. The realization theorem shows that the full deformation-group
quantifier in this sensing result is dynamically attained within the rigid
class above.}

\keywords{exact Lagrangian realization, deformation gradient, incompressible
Euler equations, Navier--Stokes equations, Beltrami fields, geometric control,
finite strain sensing}

\pacs[2020 Mathematics Subject Classification]{35Q31, 35Q30 (primary);
93B05, 42C15, 37N10 (secondary)}

\maketitle

\section{Introduction}

Let \(u\) be a smooth divergence-free velocity field on the flat torus
\(\T^3_{2\pi}=(\R/2\pi\mathbb Z)^3\), and let
\[
  \partial_tX(a,t)=u(X(a,t),t),
  \qquad
  X(a,0)=a
\]
be its Lagrangian flow.  The deformation gradient
\[
  F(a,t)=\nabla_aX(a,t)
\]
satisfies
\[
  \partial_tF=(\nabla u)(X(a,t),t)F,
  \qquad
  \det F(a,t)=1.
\]
Thus incompressibility confines every one-particle deformation gradient to
\(\SL(3,\R)\).  The principal question of this paper is whether this algebraic
constraint is dynamically sharp under the equations of motion themselves:
given a label, a positive time, and a matrix in \(\SL(3,\R)\), can that matrix be
prescribed as the exact derivative of the material endpoint of an unforced
Euler or Navier--Stokes flow?  We answer this question while keeping the
background geometry and the spectral support of the velocity field fixed in
a particularly rigid form.

We write the Euler and Navier--Stokes equations simultaneously as
\begin{equation}\label{eq:Euler-NS}
  \partial_tu+u\cdot\nabla u+\nabla p=\nu\Delta u,
  \qquad
  \nabla\cdot u=0,
  \qquad
  \nu\ge0,
\end{equation}
with \(\nu=0\) corresponding to Euler.  Our main result is the following
global endpoint theorem.

\begin{theorem}[Global exact Lagrangian realization]
\label{thm:global-SL-realization}
Fix
\[
  p\in\T^3_{2\pi},
  \qquad
  T>0,
  \qquad
  F_*\in\SL(3,\R),
  \qquad
  \nu\ge0.
\]
For every sufficiently large odd integer \(N\), there exists a real-analytic
periodic vector field \(W_N\) satisfying
\[
  \nabla\times W_N=NW_N
\]
such that
\[
  u_N(x,t)=c_Ne^{-\nu N^2t}W_N(x),
  \qquad
  c_N=
  \begin{cases}
    (NT)^{-1},&\nu=0,\\[4pt]
    \displaystyle\frac{\nu N}{1-e^{-\nu N^2T}},&\nu>0,
  \end{cases}
\]
is an exact unforced solution and its Lagrangian flow satisfies
\[
  \nabla_aX_N(p,T)=F_*.
\]
The lifted trajectory issuing from \(p\) is an embedded real-analytic arc
with nowhere-vanishing velocity.  Consequently, for each fixed
\(p\in\T^3_{2\pi}\), \(T>0\), and \(\nu\ge0\), the set of one-particle
deformation gradients at time \(T\) generated by this class of solutions is
exactly \(\SL(3,\R)\).
\end{theorem}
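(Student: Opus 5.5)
The plan is to reduce the dynamical statement to a statement about one autonomous vector field. Since \(\nabla\times W_N=NW_N\), the field \(W_N\) is divergence free and Beltrami, so \(W_N\cdot\nabla W_N=\nabla(\abs{W_N}^2/2)\) and \(\Delta W_N=-N^2W_N\). It follows that \(u_N=c_Ne^{-\nu N^2t}W_N\) solves \eqref{eq:Euler-NS} with pressure \(-\tfrac12\abs{u_N}^2\). Because \(u_N\) is a time-dependent scalar multiple of an autonomous field, the Lagrangian flow is a reparametrization of the flow \(\phi^s\) of \(W_N\):
\[
X_N(a,t)=\phi^{\sigma(t)}(a),\qquad \sigma(t)=c_N\int_0^te^{-\nu N^2\tau}\,d\tau .
\]
The choice of \(c_N\) in the statement is exactly the one giving \(\sigma(T)=1/N\) in both cases. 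Hence it suffices to construct, for every large odd \(N\), a curl eigenfield \(W_N\) with \(\nabla\phi^{1/N}(p)=F_*\). After the rescaling \(x\mapsto N(x-p)\), this is a time-one problem for an \(O(1)\)-scale Beltrami field with eigenvalue \(1\) on \(\R^3\).

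\textbf{Step 1 (control).} Lie algebra considerations show that the right-invariant system \(\dot F=S(s)F\) with \(S(s)\in\Symz\) is exactly controllable on \(\SL(3,\R)\): \(\Symz\) together with its brackets generates \(\mathfrak{sl}(3)\), and \(\SL(3,\R)\) is connected. We therefore choose an analytic control \(S(s)\), \(s\in[0,1]\), steering \(\Id\) to \(F_*\). This control is placed along a straight analytic arc \(\gamma(s)=se_1\) with unit speed.

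\textbf{Step 2 (local Beltrami field).} We solve a Cauchy--Kovalevskaya problem for \(\nabla\times v=v\) near \(\gamma\). The data are \(v(\gamma(s))=e_1\), so the trajectory is \(\gamma\) itself, and \(\nabla v(\gamma(s))=A(s)\). Here \(A(s)\) is \(S(s)\) plus the antisymmetric part forced by the curl equation, namely \(\tfrac12 v\times\cdot\) in appropriate form. The data are compatible when \(\tr A=0\) and the constraint \(A e_1=\dot\gamma\)-derivative \(=0\) are handled by adjusting the data. The component of \(\dot F\) along the flow direction is not free, so the control must be taken in a slightly enlarged sense, and a transversal perturbation of \(\gamma\) is used to recover it. This gives an analytic Beltrami field on a tube \(U\) whose time-one linearized flow along \(\gamma\) equals \(F_*\).

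\textbf{Step 3 (globalization and correction).} Runge approximation for Beltrami fields, in the style of Enciso--Peralta-Salas, approximates \(v\) in \(C^k(U)\) by a global entire Beltrami field on \(\R^3\) with frequency \(1\). Inverse localization on the torus then replaces it by a \(2\pi\)-periodic curl eigenfield of eigenvalue \(N\) which, after rescaling, is \(C^k\)-close to it on \(U\). The restriction to odd \(N\) enters in this step, through the lattice points \(\abs{k}^2=N^2\) and their spherical equidistribution. The resulting error in \(\nabla\phi^{1/N}(p)\) is small. To remove it exactly, before approximating we embed the local construction in a finite-parameter family \(v_\lambda\), \(\lambda\in\R^8\), of perturbations of the control whose endpoint map \(\lambda\mapsto F(\lambda)\) is a submersion onto a neighborhood of \(F_*\) in \(\SL(3,\R)\). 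The approximation is uniform in \(C^1\) in \(\lambda\), so a degree or implicit-function argument produces \(\lambda_N\) with an exact hit. Embeddedness of the arc and nonvanishing velocity are stable under \(C^1\)-small perturbation, because \(\gamma\) is a straight segment of length \(1\ll 2\pi N\) in rescaled variables.

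I expect the main obstacle to be Step 2 combined with the correction. The Beltrami constraint ties \(\nabla v\) along the orbit to \(v\), so the symmetric part of \(\nabla v\) is free only within \(\Symz\) subject to the condition \(\nabla v\,\dot\gamma=\ddot\gamma\). Proving that the constrained endpoint map is still a submersion onto \(\SL(3,\R)\) is the delicate part. My first attempt would allow curved \(\gamma\): its curvature supplies the missing direction. I would then verify the rank condition by a bracket computation at a generic control.
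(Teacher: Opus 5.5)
There is a genuine gap. Your reduction to a time-one problem for an eigenvalue-one field on $\R^3$ (via $\sigma(T)=1/N$ and $y=N(x-p)$) is correct, and Step 3 matches the paper in outline. However, Steps 1 and 2 do not fit together, and the fix is left as ``the delicate part.''

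\textbf{Steps 1 and 2 are incompatible.} First, for any autonomous field $D\phi_1(x_0)V(x_0)=V(\phi_1(x_0))$. Along your straight arc with $v\equiv e_1$, this forces $F(1)e_1=e_1$. Equivalently, the compatibility $\tfrac{d}{ds}v(\gamma(s))=A(s)e_1=S(s)e_1$ must vanish. So the reachable endpoints form a copy of $\SL(2,\R)$ fixing $e_1$, and no small transversal perturbation reaches a general $F_*$. Second, even on a compatible curved arc, the deformation solves $\dot F=(S+\tfrac12[v]_\times)F$, not $\dot F=SF$. With $|v|\sim1$ the skew term is an $O(1)$ drift. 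The control you chose for the driftless system therefore does not steer the actual system to $F_*$. A ``bracket computation at a generic control'' does not settle exact fixed-time reachability for this system with drift.

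\textbf{The idea you are missing} is in Lemma~\ref{lem:embedded-control}. The arc is not chosen; it is dictated by the control. The velocity is transported as $v=\varepsilon Gq$ with $\dot G=SG$, so $\dot v=Sv=Av$ holds automatically. At eigenvalue one the skew part of $\nabla V$ is $\tfrac12[V]_\times$, so a slow particle sees only an $O(\varepsilon)$ rotation while the strain $S$ stays $O(1)$. The control family is also made endpoint-regular before any PDE enters. This uses the pulse loop $L,D,K,-D$ with $\operatorname{Ad}_{e^{-D}}\mathfrak s+\mathfrak s=\mathfrak{sl}(3)$, followed by polynomial approximation and correction. The analytic implicit function theorem then gives $F_{\beta(\varepsilon),\varepsilon}(1)=F_*$ with an invertible eight-parameter derivative.

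\textbf{Embeddedness needs its own argument.} Your straight-segment reasoning no longer applies once the arc is curved. The paper runs the control path inside the connected set $\{G\in\SL(3,\R):e\cdot Gq>0\}$, which gives $e\cdot\dot x\ge m>0$.

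\textbf{Two smaller points.} Cauchy--Kovalevskaya needs a noncharacteristic surface, not a curve. The paper uses a ribbon with closed tangential data $w=\nabla_\Sigma f$, and chooses the frame so that the mixed second fundamental form equals $\ip{AE}{N}$. Also, Runge approximation and inverse localization act on individual fields. So the parametric family is replaced by its affine linearization $Z_0+\sum_j\eta_jZ_j$, approximated in $C^2$ because the endpoint derivative involves $D^2Z_0$.
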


We refer to Theorem~\ref{thm:global-SL-realization} as Theorem A.

The rigidity in Theorem A is simultaneous.  The torus is the fixed flat
torus, the particle label and terminal time are prescribed, no control force
is present, and the velocity remains in one positive curl eigenspace.  The
same construction works for Euler and for every positive constant viscosity;
viscosity changes only the physical clock through the explicit factor
\(c_Ne^{-\nu N^2t}\).  The reverse inclusion in the final equality is forced
by incompressibility, so the theorem identifies the full endpoint range
rather than merely a large subset of it.

The proof is built around an endpoint-regular Beltrami jet carried by a
moving embedded arc.  The finite-dimensional starting point is the
right-invariant control system
\[
  \dot G=SG,
  \qquad
  S\in\Symz(3).
\]
Trace-free symmetric matrices and their commutators generate
\(\mathfrak{sl}(3)\).  We strengthen the resulting accessibility in two
directions needed by the PDE construction: the control path is made regular
for the endpoint map, and it is kept inside a suitable half-space orbit so
that an associated particle arc has a strictly monotone projection.  After a
small compatible perturbation, this gives analytic data
\[
  \dot x=v,\qquad
  \dot F=AF,\qquad
  A=S+\frac12[v]_\times,
\]
with \(F(0)=\Id\), \(F(1)=F_*\), and with an eight-parameter endpoint
derivative that is an isomorphism.  The relation between the skew part of
\(A\) and \(v\) is precisely the first-jet compatibility imposed by
\(\nabla\times V=V\).

An analytic ribbon through the controlled arc then provides Cauchy data for
the Beltrami equation.  The local Cauchy--Kovalevskaya construction and the
Euclidean Runge theorem for Beltrami fields
\cite{EncisoPeraltaSalas2012} produce a global Euclidean field with the
required moving-orbit jet to arbitrary accuracy.  The inverse-localization
theorem on the torus \cite{EncisoPeraltaSalasTorres2017} transfers the
construction to every sufficiently large odd curl frequency.  Approximation
alone would not prescribe the terminal derivative exactly.  Endpoint
regularity is retained throughout the construction, and a
finite-dimensional regular-zero argument is applied after the Runge step and
again after toral localization.  These two corrections recover the exact
matrix \(F_*\) while preserving the embedded trajectory and its nonzero
velocity.

The Lagrangian formulation of incompressible flow goes back to Arnold's
interpretation of the Euler equation as geodesic motion on the group of
volume-preserving diffeomorphisms and to the analytic framework of Ebin and
Marsden \cite{Arnold1966,EbinMarsden1970}.  These works make the flow map and
its derivative intrinsic objects, but do not address their endpoint range.  A
finite-dimensional matrix evolution does arise for affine free-boundary Euler
flows: in the incompressible case the affine deformation follows a geodesic in
\(\SL(3,\R)\) \cite{Sideris2017}.  That reduction concerns an evolving
ellipsoidal domain and a spatially affine ansatz, rather than a prescribed
one-particle endpoint on a fixed periodic background.

There is also an extensive controllability theory for incompressible fluids.
Degenerate Fourier forcing yields Eulerian controllability and approximation
results for Euler and Navier--Stokes systems
\cite{AgrachevSarychev2005}.  For Lagrangian targets, boundary controls have
been used to move curves and particle sets approximately in two- and
three-dimensional Euler flows
\cite{GlassHorsin2010,GlassHorsin2012}, and analogous approximate Lagrangian
controllability results are available for viscous flows with controlled
boundaries \cite{LiaoSueurZhang2022}.  On the three-torus, Nersesyan proved
that a finite-dimensional body force can simultaneously approximately control
the velocity and the full Lagrangian flow, the latter in \(C^1\)
\cite{Nersesyan2015}.  In particular, the distinction from
Theorem~\ref{thm:global-SL-realization} is not simply that the present target
is a first derivative: \(C^1\)-approximation already sees that derivative.
The distinctions are exactness and admissible dynamics.  Theorem A imposes an
exact matrix equality while using no external control and restricting the
solution to one curl eigenspace.

The Beltrami literature provides the local-to-global flexibility on which our
construction builds.  Analytic Cauchy data and Runge approximation have been
used to produce Euclidean Beltrami fields with prescribed, structurally stable
vortex lines and tubes
\cite{EncisoPeraltaSalas2012,EncisoPeraltaSalas2015}; complementary local
normal forms were obtained in \cite{SatoYamada2019}.  For the standard flat
torus, the inverse-localization theorem of
\cite[Theorem~2.1]{EncisoPeraltaSalasTorres2017} is especially relevant: for
each Euclidean field \(v\) with \(\nabla\times v=v\), each finite \(C^m\)
order, and each prescribed accuracy, a rescaled toral curl eigenfield
approximates \(v\) at every sufficiently large odd frequency.  The arithmetic
dependence of inverse localization on a general flat torus has since been
classified, for Laplace eigenfunctions, in
\cite{EncisoGarciaRuizPeraltaSalas2023}.  Inverse localization is nevertheless
an approximation statement.  It does not by itself prescribe an exact
finite-time Lagrangian endpoint.  Here exactness comes from constructing a
compatible Beltrami-jet family that is regular for the endpoint map and then
applying a finite-dimensional regular-zero correction after localization.

Realization in the sense of dynamical universality concerns another target.
Cardona, Miranda, Peralta-Salas and Presas embed non-autonomous dynamics into
steady Euler flows on suitable higher-dimensional Riemannian manifolds; in
particular, orientation-preserving diffeomorphisms occur exactly as
first-return maps on invariant submanifolds
\cite{CardonaMirandaPeraltaSalasPresas2023}.  Berger, Florio and Peralta-Salas
constructed decaying Beltrami fields in \(\R^3\) whose Poincar\'e maps
approximate arbitrary area-preserving disk diffeomorphisms at suitable
sections and scales \cite{BergerFlorioPeraltaSalas2023}.  A return map on an
adapted section or invariant submanifold is different from the derivative of
the material endpoint at a label and physical time fixed in advance.

Against this background, the content of Theorem A is the conjunction of its
constraints.  The ambient space is the standard flat three-torus; the label
and terminal time are prescribed; no body or boundary force is used; the
velocity belongs to one positive curl eigenspace; and the terminal derivative
is exactly an arbitrary specified element of \(\SL(3,\R)\).  Moreover, the
construction works at every sufficiently large odd frequency and, after an
explicit change of clock, for both Euler and every positive constant
viscosity.  To the best of our knowledge, none of the realization or
controllability results above provides this fixed-background, unforced,
single-shell exact endpoint statement.

The second theme of the paper is finite Lagrangian sensing.  Let
\(d\in\sphere^{n-1}\) be a fixed initial material direction and set
\[
  \tau_d(a,t)=\frac{F(a,t)d}{\abs{F(a,t)d}}.
\]
If \(S=(\nabla u+\nabla u^\top)/2\) is the rate-of-strain tensor, then
\begin{equation}\label{eq:stretch}
  \frac{d}{dt}\log\abs{F(a,t)d}
  =
  \tau_d(a,t)^\top S(X(a,t),t)\tau_d(a,t).
\end{equation}
Each initial direction therefore supplies one transported rank-one
observation of the instantaneous strain.  This raises a sharp
finite-dimensional question: which fixed initial direction systems determine
every trace-free symmetric tensor after every volume-preserving deformation?

For
\(\mathcal D=\{d_1,\ldots,d_M\}\subset\sphere^{n-1}\) and
\(F\in\SL(n,\R)\), define
\[
  \tau_j(F)=\frac{Fd_j}{\abs{Fd_j}}.
\]
We call \(\mathcal D\) robustly trace-free sensing if
\[
  B\longmapsto
  \bigl(\tau_j(F)^\top B\tau_j(F)\bigr)_{j=1}^M
\]
is injective on \(\Symz(n)\) for every \(F\in\SL(n,\R)\).

\medskip
\noindent\textbf{Theorem B (sharp finite sensing).}
For \(n\ge2\), a direction system
\(\mathcal D=\{d_1,\ldots,d_M\}\subset\sphere^{n-1}\) is robustly
trace-free sensing if and only if
\[
  \operatorname{span}\{d_j\otimes d_j:1\le j\le M\}=\Sym(n).
\]
Equivalently, the static quadratic measurement map
\[
  B\longmapsto(d_j^\top Bd_j)_{j=1}^M
\]
is injective on the full symmetric space \(\Sym(n)\).  In particular, the
sharp number of fixed material directions is
\[
  N_n=\frac{n(n+1)}2.
\]
The formal statement and proof are given in
Theorem~\ref{thm:classification}.
\medskip

The dimension of \(\Symz(n)\) is \(N_n-1\), but robustness under all
\(F\in\SL(n,\R)\) requires \(N_n\) channels.  The additional channel is not a
dimension-counting artifact: congruence by \(F\) moves the trace-free
hyperplane relative to the fixed rank-one projectors, and uniform
injectivity is equivalent to spanning the whole of \(\Sym(n)\).  Quadratic
measurements and outer-product frames have a well-developed static theory
\cite{BenedettoFickus2003,CasazzaPinkhamTuomanen2016}; the quantifier that
drives Theorem B is robustness after every volume-preserving congruence.
This is also distinct from phase retrieval, where the unknown is a vector
reconstructed from phaseless measurements rather than an arbitrary symmetric
tensor \cite{BalanCasazzaEdidin2006}.

In dimension three, Theorem B gives the minimal number six.  There remains a
quantitative design problem: among all minimal systems, which one maximizes
the worst quadratic sensing strength?  For six unit directions
\(\mathcal D=\{d_1,\ldots,d_6\}\subset\sphere^2\), set
\[
  \alpha_{\mathcal D}
  =
  \inf_{\substack{B\in\Sym(3)\\ \norm{B}_{\mathrm F}=1}}
  \sum_{j=1}^6(d_j^\top Bd_j)^2.
\]

\medskip
\noindent\textbf{Theorem C (the icosahedral optimum).}
Every six-direction system in \(\R^3\) satisfies
\[
  \alpha_{\mathcal D}\le\frac45.
\]
Equality holds if and only if the six unoriented lines are equiangular and
tight:
\[
  \sum_{j=1}^6d_j\otimes d_j=2\Id,
  \qquad
  \abs{d_i\cdot d_j}^2=\frac15
  \quad(i\ne j).
\]
In that case, for every \(B\in\Sym(3)\), writing
\[
  B=B_0+\frac{\tr B}{3}\Id,
  \qquad
  B_0\in\Symz(3),
\]
one has the exact identity
\[
  \sum_{j=1}^6(d_j^\top Bd_j)^2
  =
  \frac45\norm{B_0}_{\mathrm F}^2
  +
  \frac23(\tr B)^2.
\]
The six axes through opposite vertices of a regular icosahedron realize the
equality geometry.  The formal result is Theorem~\ref{thm:optimal-six},
with its icosahedral realization in Corollary~\ref{cor:icosahedron}.
\medskip

The equality case is the six-vector, three-dimensional outer-product frame
optimum \cite{CasazzaPinkhamTuomanen2016}.  Its equiangular geometry is also
the classical optimal configuration of six unoriented lines in \(\R^3\) and
fits the Grassmannian and spherical-design descriptions
\cite{LemmensSeidel1973,ConwayHardinSloane1996,DelsarteGoethalsSeidel1977}.
Here it acquires a Lagrangian interpretation: among minimal robust systems,
the icosahedral axes maximize the undeformed outer-product lower bound.
Lemma~\ref{lem:congruence} transports this bound through an arbitrary
volume-preserving deformation with an explicit condition-number loss.

Theorems A--C close an algebraic--dynamical loop.  Specialized to
\(n=3\), Theorem B requires robustness over the whole group \(\SL(3,\R)\),
while Theorem A proves that this entire test set is dynamically attained by
exact solutions in the rigid periodic unforced single-shell class.
Consequently, in dimension three, the necessity statement and the sharp
six-channel count are dynamically sharp rather than artifacts of an enlarged
kinematic model.  Theorem C then selects the optimal geometry at the minimal
channel count.

The paper develops several further consequences of this loop.  At the
lowest nontrivial curl frequency, an explicit eight-dimensional Beltrami jet
gives a local endpoint chart in \(\SL(3,\R)\), while a closed-form construction
realizes every matrix in
\(\Sym^+(3)\cap\SL(3,\R)\) at a fixed material point.  At sufficiently high odd
frequencies, generic finite point configurations admit simultaneous exact
interpolation of compatible Beltrami first jets, which yields simultaneous
realization of positive-definite determinant-one targets by one exact
unforced solution.  The accompanying dimension, arithmetic-resonance, and
single-shell time obstructions identify the sharp limitations of this
multipoint ansatz.

After completing this main algebraic--dynamical loop, we record several
consequences and sharp limits of the same observation geometry.  They include
an exact six-channel form of the classical Navier--Stokes strain criterion, a
three-direction one-sided fixed-trajectory Euler criterion, and the parallel
stochastic Cauchy consequence for Navier--Stokes
\cite{BusnelloFlandoliRomito2005,ConstantinIyer2008}.  A finite one-sided
obstruction and a moving-pulse kinematic firewall identify where
finite-dimensional reconstruction and volume preservation alone cease to
control fixed-trajectory accumulation.  These later results clarify the PDE
frontier; the principal new mechanism and the main claims of the paper are
Theorems A--C.

The exposition follows the logical construction of these results.  We first
develop the explicit and global Beltrami realization theory, including the
endpoint-regular control lemma, the moving-arc Cauchy construction, and the
multipoint interpolation theorem.  We then prove the congruence
classification and the icosahedral optimum, and derive their dynamical
sharpness.  A consolidated section records continuation consequences, and the
last section isolates the sharp one-sided and fixed-trajectory limitations.

\section{Exact periodic realization of determinant-one deformations}

This section proves Theorem~\ref{thm:global-SL-realization} and develops
complementary low-frequency and multipoint realization results.  We begin
with an explicit first-jet interpolation on the lowest nontrivial Beltrami
shell.  Beltrami fields and their local flexibility have a substantial
literature; see, for example, \cite{SatoYamada2019}.  The point here is to
tie fixed-eigenvalue first-jet interpolation to exact deformation
realization.

\begin{lemma}[Beltrami jet interpolation]
\label{lem:Beltrami}
For every \(H\in\Symz(3)\), there exists a real-analytic vector field \(W_H\)
on \(\T^3_{2\pi}\) such that
\[
  \nabla\cdot W_H=0,\qquad
  \nabla\times W_H=\sqrt2\,W_H,\qquad
  W_H(0)=0,\qquad
  \nabla W_H(0)=H.
\]
\end{lemma}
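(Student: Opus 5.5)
The field will be a real trigonometric polynomial supported on the lowest nontrivial curl shell, namely the frequencies \(k\in\mathbb Z^3\) with \(\abs{k}^2=2\). These are the twelve vectors obtained from \((\pm1,\pm1,0)\) by permutation, forming six antipodal pairs. I will choose one representative \(k_1,\dots,k_6\) from each pair and set \(\hat k=k/\abs{k}\). For each \(k\) and each \(a\perp k\), a direct computation using \(k\times(\hat k\times a)=-\abs{k}a\) shows that
\[
  V_{k,a}(x)=a\cos(k\cdot x)-(\hat k\times a)\sin(k\cdot x)
\]
satisfies \(\nabla\times V_{k,a}=\sqrt2\,V_{k,a}\). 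If the sign is off, I will flip the sine term; the plan does not depend on it. Such a field is also divergence free, since \(a\perp k\) and \(\hat k\times a\perp k\). Any finite sum \(W=\sum_{j}V_{k_j,a_j}\) is therefore real-analytic, \(2\pi\)-periodic, divergence free and a \(\sqrt2\)-eigenfield of curl. It remains to solve a finite linear system for the amplitudes \(a=(a_1,\dots,a_6)\in\prod_j k_j^\perp\cong\R^{12}\).

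At the origin the value and the gradient are read off as
\[
  W(0)=\sum_j a_j,\qquad
  \nabla W(0)=-\sum_j(\hat k_j\times a_j)\otimes k_j .
\]
So I will study the linear map \(L:\R^{12}\to\R^3\times\mathfrak{gl}(3)\), \(a\mapsto(W(0),\nabla W(0))\). Its image lies a priori in the subspace
\[
  \mathcal C=\{(w,M):\tr M=0,\ \text{skew part of }M\text{ corresponds to }\tfrac{\sqrt2}{2}[w]_\times\text{ via curl}\}.
\]
This holds because \(\nabla\cdot W=0\) forces \(\tr\nabla W(0)=0\), and \(\nabla\times W=\sqrt2 W\) fixes the antisymmetric part of \(\nabla W(0)\) in terms of \(W(0)\). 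The subspace \(\mathcal C\) is isomorphic to \(\R^3\times\Symz(3)\) and has dimension \(8\). The lemma is then exactly the statement that \((0,H)\in L(\R^{12})\) for every \(H\in\Symz(3)\), and it suffices to prove that \(L\) maps onto \(\mathcal C\), i.e. \(\operatorname{rank}L=8\).

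The main work is this rank computation. I will do it explicitly with the choice \(k_1=(1,1,0)\), \(k_2=(1,-1,0)\), \(k_3=(0,1,1)\), \(k_4=(0,1,-1)\), \(k_5=(1,0,1)\), \(k_6=(-1,0,1)\). For each \(k_j\) I will take the basis of \(k_j^\perp\) given by \(e_\perp\), the coordinate axis orthogonal to \(k_j\), together with \(\hat k_j\times e_\perp\). I will then tabulate the symmetric trace-free parts \(-\operatorname{sym}\bigl((\hat k_j\times a)\otimes k_j\bigr)\) and the values \(a\).

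I expect two families of directions to appear. Choosing \(a=e_\perp\) should give off-diagonal entries such as \(\operatorname{sym}(e_3\otimes(1,-1,0))\)-type matrices. Choosing \(a=\hat k_j\times e_\perp\) should give diagonal contributions proportional to \(e_\perp\otimes e_\perp-\tfrac12(\text{projection onto }\operatorname{span}\{e_\perp\}^\perp)\), up to off-diagonal corrections in \(k_j\otimes k_j\). Combining antipodal-partner choices \(k_1,k_2\), and likewise \(k_3,k_4\) and \(k_5,k_6\), with opposite signs should cancel the value \(W(0)\) while isolating each of the three off-diagonal basis elements and two independent diagonal trace-free elements of \(\Symz(3)\). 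This would show that \(L\) restricted to \(\ker(a\mapsto\sum a_j)\) already maps onto \(\{0\}\times\Symz(3)\).

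The expected obstacle is purely bookkeeping: signs, and making sure that the \(W(0)\)-cancelling combinations still span all five dimensions of \(\Symz(3)\). If a hand check becomes messy, I will fall back on symmetry. The image of \(\{0\}\times\Symz(3)\) is invariant under the octahedral group permuting and reflecting coordinates, and \(\Symz(3)\) splits under that group into the diagonal part and the off-diagonal part, two irreducible pieces. It then suffices to exhibit one nonzero diagonal element and one nonzero off-diagonal element in the image.

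Given surjectivity, for each \(H\) I pick \(a\) with \(L(a)=(0,H)\) and set \(W_H=\sum_j V_{k_j,a_j}\).
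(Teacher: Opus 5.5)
Your reduction is correct and is the same as the paper's. The paper also works on the shell \(\abs{k}^2=2\): its eight fields are your \(V_{k,a}\) for the wave vectors \((1,1,0),(1,-1,0),(1,0,1),(1,0,-1)\). It also sends the jet into an eight-dimensional compatible space and uses that the skew part of \(\nabla W(0)\) vanishes at a zero. Everything therefore rests on \(\operatorname{rank}L=8\). This is true: since \(V_{-k,a}=V_{k,a}\), your twelve-dimensional space is the whole shell eigenspace and contains the paper's span, on which the \(8\times8\) jet matrix has determinant \(-\sqrt2\).

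You do not prove the rank, however, and both routes you sketch fail as written. In the direct route your bookkeeping is reversed. Since \(\nabla V_{k,a}(0)=-(\hat k\times a)\otimes k\), for \(k_1=(1,1,0)\) the choice \(a=e_3\) gives \(\operatorname{sym}\nabla V(0)=-2^{-1/2}\operatorname{diag}(1,-1,0)\). The choice \(a=\hat k_1\times e_3\) gives \(\hat k_1\times a=-e_3\) and hence the off-diagonal matrix \(\operatorname{sym}(e_3\otimes k_1)\). More importantly, inside the pair \(k_1,k_2\) the condition \(a_1+a_2=0\) forces \(a_1=-a_2\in k_1^\perp\cap k_2^\perp=\R e_3\). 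So the only value-cancelling combination in a pair is \(V_{k_1,e_3}-V_{k_2,e_3}\), whose gradient at the origin is \(-\sqrt2\operatorname{diag}(1,-1,0)\). Pairwise cancellation therefore yields only the diagonal part of \(\Symz(3)\), never an off-diagonal element. You need cancellation across pairs. For instance, with \(k_5=(1,0,1)\) and \(k_6=(-1,0,1)\), the field
\[
  W=V_{k_1,\hat k_1\times e_3}+V_{k_2,\hat k_2\times e_3}
  +2^{-1/2}\bigl(V_{k_5,e_2}+V_{k_6,e_2}\bigr)
\]
satisfies \(W(0)=0\) and \(\nabla W(0)=E_{13}+E_{31}\).

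In the symmetry route the group is wrong. An orthogonal \(R\) with \(\det R=-1\), such as a coordinate reflection or a swap of two axes, satisfies \(\nabla\times\bigl(RW(R^\top\cdot)\bigr)=-R(\nabla\times W)(R^\top\cdot)\). It therefore sends the \(+\sqrt2\) eigenspace to the \(-\sqrt2\) eigenspace and cannot be used to move elements of your image around.

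The argument is rescued by the \(24\) signed permutation matrices of determinant \(+1\). Because \(V_{-k,a}=V_{k,a}\), your space does not depend on the chosen representatives, so these rotations preserve it; they also fix the origin and act on jets by \((w,M)\mapsto(Rw,RMR^\top)\). They still act irreducibly on both pieces of \(\Symz(3)\):
\begin{itemize}
  \item on the trace-free diagonal part, every permutation of the diagonal entries is realized, using \(-P\) for odd \(P\);
  \item on the off-diagonal part, the three \(\pi\)-rotations about the axes separate the three entries by distinct characters, and a \(3\)-cycle permutes them.
\end{itemize}
The diagonal witness \(V_{k_1,e_3}-V_{k_2,e_3}\) and the off-diagonal witness above then give \((0,H)\in L(\R^{12})\) for every \(H\in\Symz(3)\), which completes your proof.

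Compared with the paper, this replaces an \(8\times8\) determinant by two explicit fields and an irreducibility argument. It yields only surjectivity from a twelve-dimensional space, not the isomorphism on a fixed eight-dimensional space that the paper reuses in Theorem~\ref{thm:local-SL-realization}.
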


\begin{proof}
Take
\[
  k_1=(1,1,0),\quad k_2=(1,-1,0),\quad
  k_3=(1,0,1),\quad k_4=(1,0,-1),
\]
and choose
\[
\begin{aligned}
e_1&=(0,0,1),& f_1&=2^{-1/2}(1,-1,0),\\
e_2&=(0,0,1),& f_2&=2^{-1/2}(-1,-1,0),\\
e_3&=(0,1,0),& f_3&=2^{-1/2}(-1,0,1),\\
e_4&=(0,1,0),& f_4&=2^{-1/2}(1,0,1).
\end{aligned}
\]
Then
\[
  k_j\times e_j=\sqrt2\,f_j,\qquad
  k_j\times f_j=-\sqrt2\,e_j.
\]
The real fields
\[
  C_j=e_j\cos(k_j\cdot x)-f_j\sin(k_j\cdot x),
  \qquad
  R_j=e_j\sin(k_j\cdot x)+f_j\cos(k_j\cdot x)
\]
are Beltrami eigenfields with eigenvalue \(\sqrt2\).

The eight-dimensional map sending the coefficients of
\[
  W=\sum_{j=1}^4(\alpha_jC_j+\beta_jR_j)
\]
to
\[
  \bigl(W(0),\operatorname{sym}\nabla W(0)\bigr)
  \in\R^3\oplus\Symz(3)
\]
has determinant \(-\sqrt2\); the matrix is recorded in
Appendix~\ref{app:jet-matrix}.  It is therefore an isomorphism.  Choose the
coefficients so that \(W(0)=0\) and
\(\operatorname{sym}\nabla W(0)=H\).  Since
\(\nabla\times W(0)=\sqrt2W(0)=0\), the antisymmetric part of
\(\nabla W(0)\) vanishes, hence \(\nabla W(0)=H\).
\end{proof}

\begin{theorem}[Local realization of arbitrary volume deformations]
\label{thm:local-SL-realization}
Fix \(a\in\T^3_{2\pi}\), \(T>0\), and \(\nu\ge0\).  There exists an open
neighborhood \(\mathcal U=\mathcal U(a,T,\nu)\) of \(\Id\) in
\(\SL(3,\R)\) such that every \(F_*\in\mathcal U\) is realized as
\[
  \nabla_aX(a,T)=F_*
\]
by an exact real-analytic periodic unforced Euler solution when \(\nu=0\),
and by an exact real-analytic periodic unforced Navier--Stokes solution when
\(\nu>0\).
\end{theorem}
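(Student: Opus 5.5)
The plan is to use the fixed-point fields of Lemma~\ref{lem:Beltrami}, translated to the label \(a\). For \(H\in\Symz(3)\), set \(\widetilde W_H(x)=W_H(x-a)\). By translation invariance of the torus, this is still a real-analytic field with \(\nabla\times\widetilde W_H=\sqrt2\,\widetilde W_H\) and \(\nabla\cdot\widetilde W_H=0\). It also satisfies \(\widetilde W_H(a)=0\) and \(\nabla\widetilde W_H(a)=H\). The coefficients in the lemma depend linearly on \(H\), since they come from inverting a fixed \(8\times8\) matrix, so \(H\mapsto\widetilde W_H\) is linear.

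Next, I would take
\[
u(x,t)=\phi(t)\,\widetilde W_H(x),\qquad \phi(t)=e^{-2\nu t}
\]
with \(\lambda=\sqrt2\). This is an exact solution of \eqref{eq:Euler-NS}. For a Beltrami field \(V\) with \(\nabla\times V=\lambda V\), we have \(V\cdot\nabla V=\nabla(|V|^2/2)-V\times(\nabla\times V)=\nabla(|V|^2/2)\), so the nonlinearity is a gradient absorbed into the pressure \(p=-\phi^2|V|^2/2\). Also \(\Delta V=-\nabla\times\nabla\times V=-\lambda^2V\). Hence \(\partial_t u=\nu\Delta u\) is exactly \(\phi'=-\nu\lambda^2\phi\). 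For \(\nu=0\) this gives a steady Euler flow with \(\phi\equiv1\), and for \(\nu>0\) it gives decaying Navier--Stokes flow.

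The decisive observation is that \(a\) is a stagnation point for all \(t\), so \(X(a,t)=a\). The deformation gradient then satisfies the linear ODE
\[
\partial_tF=\phi(t)HF,\qquad F(0)=\Id.
\]
Because the coefficient matrix is a scalar multiple of a fixed matrix, these commute at different times, and
\[
F(T)=\exp\bigl(\Phi(T)H\bigr),\qquad \Phi(T)=\int_0^T\phi\,dt>0.
\]
Here \(\Phi(T)=T\) when \(\nu=0\) and \(\Phi(T)=(1-e^{-2\nu T})/(2\nu)\) when \(\nu>0\). Every \(\exp(\Phi(T)H)\) with \(H\in\Symz(3)\) is symmetric positive definite of determinant one, so this ansatz alone reaches only \(\Sym^+(3)\cap\SL(3,\R)\) and not an open neighborhood of \(\Id\). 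The rotational directions are missing because the skew part of \(\nabla W(a)\) is forced to be \(\tfrac{\sqrt2}{2}[W(a)]_\times=0\).

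This missing rotation is the main obstacle, and I would handle it by not fixing \(W(a)=0\). Instead I would use the full eight-parameter family \(W=\sum_j(\alpha_jC_j+\beta_jR_j)\) translated to \(a\). I would consider the endpoint map
\[
\mathcal E:\R^8\to\SL(3,\R),\qquad c\mapsto\nabla_aX_c(a,T),
\]
where \(X_c\) is the flow of \(u_c=\phi(t)W_c\). This map is real-analytic, and \(\mathcal E(0)=\Id\). The goal is to show that its differential at \(c=0\) is an isomorphism onto \(\mathfrak{sl}(3)\); the inverse function theorem then gives the neighborhood \(\mathcal U\).

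To first order in \(c\), the trajectory stays at \(a\), and
\[
d\mathcal E(0)c=\Phi(T)\,\nabla W_c(a).
\]
Split this as \(\nabla W_c(a)=\operatorname{sym}\nabla W_c(a)+\tfrac{\sqrt2}{2}[W_c(a)]_\times\), up to the sign convention for \([\cdot]_\times\); the skew part is forced by \(\nabla\times W=\sqrt2 W\). The map \(c\mapsto\nabla W_c(a)\) is then the composition of the isomorphism \(c\mapsto(W_c(a),\operatorname{sym}\nabla W_c(a))\in\R^3\oplus\Symz(3)\) from Lemma~\ref{lem:Beltrami} (determinant \(-\sqrt2\) at \(0\), and hence at \(a\) by translation) with the isomorphism \((w,S)\mapsto S+\tfrac{\sqrt2}{2}[w]_\times\) onto \(\mathfrak{sl}(3)=\Symz(3)\oplus\mathfrak{so}(3)\).

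The step needing care is justifying this linearization. \(F\) solves \(\dot F=\phi\,\nabla W_c(X_c(a,t))F\), and \(X_c(a,t)-a=O(|c|)\), so the coefficient equals \(\phi\,\nabla W_c(a)+O(|c|^2)\). Duhamel's formula then gives \(\mathcal E(c)=\Id+\Phi(T)\nabla W_c(a)+O(|c|^2)\). Analyticity of the resulting solutions is inherited from the trigonometric fields, which completes the local realization.
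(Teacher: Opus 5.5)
Your proposal is correct and follows the paper's proof: you translate the eight-dimensional $\sqrt2$-shell family to $a$, consider the endpoint map $W\mapsto\nabla_aX_W(a,T)$ with derivative $\Phi(T)\nabla Z(a)$ at $W=0$, compose the jet isomorphism of Lemma~\ref{lem:Beltrami} with the Beltrami splitting $(v,S)\mapsto S+\frac{\sqrt2}{2}[v]_\times$ onto $\mathfrak{sl}(3)$, and conclude with the inverse function theorem. Your preliminary observation that the fixed-zero ansatz reaches only $\Sym^+(3)\cap\SL(3,\R)$ is exactly why the full family with $W(a)\neq0$ is needed here.
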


\begin{proof}
Let \(\lambda=\sqrt2\), and let \(\mathcal V_0\) be the
eight-dimensional space spanned by the fields \(C_j,R_j\) in the proof of
Lemma~\ref{lem:Beltrami}.  That proof and
Appendix~\ref{app:jet-matrix} show that
\[
  W\longmapsto
  \bigl(W(0),\operatorname{sym}\nabla W(0)\bigr)
\]
is an isomorphism from \(\mathcal V_0\) onto
\(\R^3\oplus\Symz(3)\).  After translation, the same is true at \(a\) for
\[
  \mathcal V_a
  :=
  \left\{W_0(\,\cdot-a):W_0\in\mathcal V_0\right\}.
\]

For \(W\in\mathcal V_a\), set
\[
  b_\nu(t)=e^{-\nu\lambda^2t},\qquad
  u_W(x,t)=b_\nu(t)W(x),\qquad
  p_W(x,t)=-\frac12b_\nu(t)^2\abs{W(x)}^2.
\]
As in the proof of Theorem~\ref{thm:realization}, the Beltrami and heat
identities make \((u_W,p_W)\) an exact periodic unforced solution of the
required equation.

Let \(X_W\) be its flow and define
\[
  \mathcal E_T:\mathcal V_a\longrightarrow\SL(3,\R),
  \qquad
  \mathcal E_T(W)=\nabla_aX_W(a,T).
\]
Smooth dependence of finite-dimensional ODEs on parameters makes
\(\mathcal E_T\) smooth, and \(\mathcal E_T(0)=\Id\).  If
\(Z\in\mathcal V_a\), linearizing
\[
  \dot X_W=b_\nu(t)W(X_W),\qquad
  \frac{d}{dt}\nabla_aX_W
  =
  b_\nu(t)\nabla W(X_W)\nabla_aX_W
\]
at \(W=0\) gives
\[
  D\mathcal E_T(0)[Z]
  =
  \theta_{\nu,\lambda}(T)\nabla Z(a),
  \qquad
  \theta_{\nu,\lambda}(T)
  :=
  \int_0^T e^{-\nu\lambda^2t}\,dt>0.
\]
The trajectory variation contributes no additional term here: it is
multiplied by the gradient of the zero base field.

With the convention \([v]_\times z=v\times z\), the Beltrami equation
implies
\[
  \nabla Z(a)
  =
  \operatorname{sym}\nabla Z(a)
  +
  \frac{\lambda}{2}[Z(a)]_\times.
\]
The map
\[
  (v,S)\longmapsto S+\frac{\lambda}{2}[v]_\times
\]
is an isomorphism from
\(\R^3\oplus\Symz(3)\) onto \(\mathfrak{sl}(3)\), by the unique
symmetric--skew decomposition.  Hence
\[
  D\mathcal E_T(0):
  \mathcal V_a\longrightarrow T_{\Id}\SL(3,\R)
\]
is an isomorphism.  The inverse function theorem now gives neighborhoods
\(\mathcal O\) of \(0\) and \(\mathcal U\) of \(\Id\) such that
\(\mathcal E_T:\mathcal O\to\mathcal U\) is a diffeomorphism.
\end{proof}

\begin{remark}[Moving-particle version]
\label{rem:local-SL-moving}
The realization in Theorem~\ref{thm:local-SL-realization} can be made at a
particle whose lifted velocity is nowhere zero.  For a realizing field \(W\),
choose a constant \(V\in\R^3\) with
\(\abs V>\norm{W}_{L^\infty}\) and apply the Galilean transformation
\[
  u_{W,V}(x,t)
  =
  V+e^{-\nu\lambda^2t}W(x-Vt).
\]
On the universal cover, its flow satisfies
\[
  X_{W,V}(a,t)=Vt+X_W(a,t),
  \qquad
  \nabla_aX_{W,V}(a,t)=\nabla_aX_W(a,t),
\]
and
\[
  \abs{\dot X_{W,V}(a,t)}
  \ge
  \abs V-\norm W_{L^\infty}>0.
\]
The boosted trajectory is generally not a straight line; it is
\(a+Vt\) only when \(W(a)=0\).
\end{remark}

\begin{theorem}[One-point realization of every SPD volume deformation]
\label{thm:realization}
Let
\[
  F_*\in\Sym^+(3)\cap\SL(3,\R),\qquad T>0,\qquad \nu\ge0.
\]
There exists an exact real-analytic periodic unforced solution of
\eqref{eq:Euler-NS}, interpreted as Navier--Stokes when \(\nu>0\) and as
Euler when \(\nu=0\), such that
\[
  X(0,t)=0\quad(0\le t\le T),
  \qquad
  \nabla_aX(0,T)=F_*.
\]
\end{theorem}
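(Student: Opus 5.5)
Since \(F_*\) is symmetric positive definite with determinant one, we can write it as \(F_*=\exp(K)\) with \(K=\log F_*\in\Symz(3)\): take the logarithm eigenvalue by eigenvalue, and \(\tr K=\log\det F_*=0\). The plan is to realize this logarithm with a single stagnation-point Beltrami field, so the particle at the origin never moves and its deformation gradient obeys a linear ODE with a constant, symmetric coefficient matrix. Concretely, Lemma~\ref{lem:Beltrami} applied to \(H=K/\theta_{\nu,\lambda}(T)\), with \(\lambda=\sqrt2\) and \(\theta_{\nu,\lambda}(T)=\int_0^Te^{-\nu\lambda^2t}\,dt>0\), gives a real-analytic periodic field \(W_H\) with
\[
\nabla\cdot W_H=0,\qquad \nabla\times W_H=\lambda W_H,\qquad W_H(0)=0,\qquad \nabla W_H(0)=H.
\]
The solution will be \(u(x,t)=e^{-\nu\lambda^2t}W_H(x)\) with pressure \(p=-\tfrac12 e^{-2\nu\lambda^2t}\abs{W_H}^2\).

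First I check that \((u,p)\) is an exact solution of \eqref{eq:Euler-NS}. Three identities do this. The vector identity
\[
u\cdot\nabla u=\nabla\tfrac12\abs u^2-u\times(\nabla\times u)
\]
has vanishing last term, because \(\nabla\times u=\lambda u\) is parallel to \(u\); so the nonlinearity is a pure gradient and is absorbed by \(p\). Next, \(\nabla\cdot W_H=0\) together with \(\nabla\times\nabla\times W_H=\lambda^2W_H\) gives \(\Delta W_H=-\lambda^2W_H\). Hence \(\partial_tu=-\nu\lambda^2u=\nu\Delta u\). Real-analyticity and periodicity are inherited from the trigonometric fields \(C_j,R_j\). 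This same verification is the one cited in the proof of Theorem~\ref{thm:local-SL-realization}, so I state it explicitly here.

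Next, the trajectory and its gradient. Since \(u(0,t)=0\) for all \(t\), uniqueness for the Lagrangian ODE gives \(X(0,t)=0\) on \([0,T]\). The variational equation along this fixed trajectory is then
\[
\dot F=e^{-\nu\lambda^2t}\,\nabla W_H(0)\,F=e^{-\nu\lambda^2t}HF,\qquad F(0)=\Id.
\]
The coefficient matrices \(e^{-\nu\lambda^2t}H\) are all multiples of one fixed matrix, so they commute at different times and the solution is
\[
F(t)=\exp\Bigl(\int_0^te^{-\nu\lambda^2s}\,ds\;H\Bigr).
\]
At \(t=T\) this equals \(\exp(\theta_{\nu,\lambda}(T)H)=\exp(K)=F_*\).

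I expect no real obstacle. The one substantive point is that the velocity gradient at the stagnation point must be exactly \(H\), with no skew part. Lemma~\ref{lem:Beltrami} guarantees this: \(W_H(0)=0\) forces the curl at \(0\) to vanish. Consequently the exponential stays symmetric, so only SPD targets are reached by this one-point argument. That is exactly why the statement is restricted to \(\Sym^+(3)\cap\SL(3,\R)\), whereas Theorem~\ref{thm:global-SL-realization} needs a moving trajectory to generate rotations.
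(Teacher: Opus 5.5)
Your proof is correct and is the paper's argument: a stagnation-point Beltrami field from Lemma~\ref{lem:Beltrami} with an explicit heat-decaying amplitude, so that \(F(0,t)\) is the exponential of a scalar multiple of one fixed trace-free symmetric matrix. The only difference is where the constant goes. You put the normalization \(1/\theta_{\nu,\lambda}(T)\) into \(H\), whereas the paper keeps \(H=\log F_*\) and puts the same constant \(c_{\nu,T}=1/\theta_{\nu,\lambda}(T)\) into the amplitude; by linearity the two are equivalent.
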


\begin{proof}
Set \(H=\log F_*\in\Symz(3)\), and let \(W_H\) be given by
Lemma~\ref{lem:Beltrami}.  With \(\lambda=\sqrt2\), define
\[
  c_{\nu,T}
  =
  \begin{cases}
    \displaystyle
    \frac{\nu\lambda^2}{1-e^{-\nu\lambda^2T}},
      &\nu>0,\\[6pt]
    T^{-1},&\nu=0,
  \end{cases}
  \qquad
  u(x,t)=c_{\nu,T}e^{-\nu\lambda^2t}W_H(x),
\]
and
\[
  p(x,t)
  =
  -\frac{c_{\nu,T}^2}{2}
  e^{-2\nu\lambda^2t}\abs{W_H(x)}^2.
\]
The heat evolution cancels the viscous term when \(\nu>0\), and is stationary
when \(\nu=0\).  The Beltrami identity gives
\[
  (W_H\cdot\nabla)W_H
  =
  \nabla\frac{\abs{W_H}^2}{2},
\]
so the nonlinearity is cancelled by the pressure.  Thus \((u,p)\) solves
\eqref{eq:Euler-NS}.

Because \(W_H(0)=0\), the origin is a fixed material point.  Along it,
\[
  \nabla u(0,t)=c_{\nu,T}e^{-\nu\lambda^2t}H.
\]
Hence
\[
  F(0,t)
  =
  \begin{cases}
    \displaystyle
    \exp\left(
      \frac{1-e^{-\nu\lambda^2t}}
           {1-e^{-\nu\lambda^2T}}H
    \right),&\nu>0,\\[10pt]
    \exp\left(\frac{t}{T}H\right),&\nu=0,
  \end{cases}
\]
and \(F(0,T)=e^H=F_*\).
\end{proof}

\subsection{Global realization of arbitrary volume-deformation gradients}

The preceding two results use one explicit low-frequency eigenspace.  We now
show that high-frequency Beltrami fields exhaust the whole deformation group
at one moving material particle.  The proof combines geometric control,
the Beltrami Cauchy--Kovalevskaya and Runge theorems, inverse localization on
the torus, and a finite-dimensional endpoint correction.

The relation of this fixed-background endpoint problem to neighboring
realization results was discussed in the Introduction.  We now turn to the
construction.  For an autonomous vector field \(V\), we write
\(\phi_t^V\) for its local flow.

\begin{lemma}[Embedded endpoint-regular control]
\label{lem:embedded-control}
For every \(F_*\in\SL(3,\R)\), there are a jointly real-analytic family
\[
  S_\eta:[0,1]\longrightarrow\Symz(3),
  \qquad
  \eta\in B_\rho(0)\subset\R^8,
\]
a nonzero \(v_0\in\R^3\), and real-analytic families
\[
\begin{aligned}
  \dot v_\eta&=S_\eta v_\eta,
  &v_\eta(0)&=v_0,\\
  A_\eta&=S_\eta+\frac12[v_\eta]_\times,\\
  \dot F_\eta&=A_\eta F_\eta,
  &F_\eta(0)&=\Id,\\
  \dot x_\eta&=v_\eta,
  &x_\eta(0)&=0,
\end{aligned}
\]
such that
\[
  F_0(1)=F_*,
  \qquad
  D_\eta F_\eta(1)\big|_{\eta=0}:
  \R^8\longrightarrow T_{F_*}\SL(3,\R)
\]
is an isomorphism.  Moreover, there are \(e\in\R^3\) and \(m>0\) for which
\[
  e\cdot v_\eta(t)\ge m
  \qquad
  (\eta\in B_\rho(0),\ 0\le t\le1).
\]
The curves \(x_\eta\) are therefore embedded analytic arcs with nowhere
vanishing velocity.  The family can be chosen so that all these arcs lie in
an arbitrarily small ball.
\end{lemma}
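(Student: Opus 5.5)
The plan is to exploit the fact that the velocity equation \(\dot v=Sv\) is \emph{decoupled} from \(F\). If \(G_S\) denotes the solution of the driftless right-invariant system \(\dot G=SG\), \(G(0)=\Id\), then \(v(t)=G_S(t)v_0\) exactly. Replacing \(v_0\) by \(\varepsilon v_0\) gives
\[
  A=S+\tfrac{\varepsilon}{2}[G_Sv_0]_\times .
\]
So the \(F\)-equation is an \(O(\varepsilon)\) perturbation of \(\dot G=SG\), uniformly in \(C^1\) on bounded sets of controls. The arc \(x\) has length \(O(\varepsilon)\), which gives the "arbitrarily small ball" clause. The half-space condition \(e\cdot v\ge m\) depends only on the unperturbed path \(G_S\). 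I would therefore first solve everything for \(\varepsilon=0\), that is, for \(\dot G=SG\), and then absorb the \(\varepsilon\)-drift by an implicit-function correction.

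\textbf{Step 1 (choice of \(v_0,e\) and a connected admissible region).} Pick \(v_0\ne0\) with \(F_*v_0\) not a negative multiple of \(v_0\); this is an open dense condition. Then choose \(e\) with \(e\cdot v_0>0\) and \(e\cdot F_*v_0>0\). Let
\[
  \Omega=\{G\in\SL(3,\R): e\cdot Gv_0>0\}.
\]
This set is open, contains \(\Id\) and \(F_*\), and I would check that it is connected. For instance, \(\Omega\) fibres over the open half-space \(\{w:e\cdot w>0\}\) via \(G\mapsto Gv_0\). The fibres are cosets of the connected stabilizer of \(v_0\) in \(\SL(3,\R)\), an affine group \(\cong\SL(2,\R)\ltimes\R^2\). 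A connected base with connected fibres gives a connected \(\Omega\).

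\textbf{Step 2 (regular control inside \(\Omega\)).} The controls satisfy \(\Symz(3)+[\Symz(3),\Symz(3)]=\mathfrak{sl}(3)\), since the commutators fill \(\mathfrak{so}(3)\). Hence \(\dot G=SG\) is bracket generating, and Chow--Rashevskii on the connected open manifold \(\Omega\) gives an analytic (say polynomial) control \(S_0\) whose trajectory stays in \(\Omega\) and ends at \(F_*\). The endpoint map of a bracket-generating driftless system has regular controls, which are generic in the sense of Sontag's/Sussmann's results. After an arbitrarily small analytic perturbation of \(S_0\), followed by a re-steering correction near the endpoint (still inside \(\Omega\) by compactness), I get a control with \(G(1)=F_*\) whose endpoint differential is onto \(T_{F_*}\SL(3,\R)\). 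I then pick eight analytic directions \(S_1,\dots,S_8\) whose images span this tangent space and set \(S_\eta=S_0+\sum\eta_iS_i\). By compactness of \([0,1]\) and openness of \(\Omega\), I obtain \(e\cdot G_{S_\eta}(t)v_0\ge m_0>0\) for \(|\eta|\) small.

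\textbf{Step 3 (restoring the drift).} Now set \(v_\eta=\varepsilon G_{S_\eta}v_0\) and let \(F_\eta^\varepsilon\) solve \(\dot F=(S_\eta+\tfrac\varepsilon2[v_\eta]_\times)F\). The map \((\eta,\varepsilon)\mapsto F^\varepsilon_\eta(1)\) is analytic. At \(\varepsilon=0\) it is a submersion in \(\eta\), indeed an isomorphism \(\R^8\to T_{F_*}\SL(3,\R)\). The implicit function theorem therefore gives an analytic \(\eta(\varepsilon)\) with \(F^\varepsilon_{\eta(\varepsilon)}(1)=F_*\), and the derivative stays an isomorphism for \(\varepsilon\) small. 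Fixing such an \(\varepsilon\), I recentre the family at \(\eta(\varepsilon)\) and shrink \(\rho\). The bound \(e\cdot v_\eta\ge\varepsilon m_0=:m\) persists, so \(t\mapsto e\cdot x_\eta(t)\) is strictly increasing. This makes each \(x_\eta\) an embedded analytic arc with nowhere-vanishing velocity, of diameter \(\le C\varepsilon\).

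\textbf{Main obstacle.} I expect the hardest step to be Step 2: producing a control that is simultaneously regular for the endpoint map and confined to \(\Omega\). My first attempt would be to concatenate a Chow path in \(\Omega\) with a short regular loop near \(F_*\), for example a generic polynomial control on a small time interval, which is regular by the genericity theorem. Smoothing the junction analytically while preserving both regularity and the endpoint is routine, because regularity is an open condition.
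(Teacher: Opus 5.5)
Your Steps 1 and 3 are the paper's argument. You use the same region \(\Omega_{e,q}\) with the same fibre-bundle connectedness proof, and Chow--Rashevskii inside it. You also use the same implicit-function correction in \(\varepsilon\) after setting \(v=\varepsilon G_\theta q\), where \(e\cdot v_\eta\ge\varepsilon m_0\) gives embeddedness and the small ball. The real difference is how you get endpoint regularity. The paper uses no genericity theorem. It inserts, before the Chow path, an explicit identity loop of constant pulses \(L,D,K,-D\). Its endpoint \(e^{-D}e^Ke^De^L\) has derivative \(\operatorname{Ad}_{e^{-D}}K+L\) at \(K=L=0\). The identity \(\operatorname{Ad}_{e^{-D}}\mathfrak s+\mathfrak s=\mathfrak{sl}(3)\) follows from the same commutator \([D,E_{ij}+E_{ji}]=(d_i-d_j)(E_{ij}-E_{ji})\) used for bracket generation. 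This works because of right-invariance: the endpoint derivative is \(G(1)\int_0^1\operatorname{Ad}_{G(t)^{-1}}\delta S(t)\,dt\), so regularity just means that the conjugates \(\operatorname{Ad}_{G(t)^{-1}}\mathfrak s\) span \(\mathfrak{sl}(3)\). The paper's route is therefore self-contained. Yours is shorter to state but imports Sontag's nonsingular-control theorem. That import is legitimate here, since the system is analytic and bracket generating.

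Two steps of yours need tightening.

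First, a generic polynomial control on a short interval is regular but is not a loop. Either cite a generic-loops result, or follow a small regular control \(u\) on \([0,\tau]\) by its reversal \(t\mapsto-u(2\tau-t)\). The reversal retraces the path back to \(\Id\). The loop is still regular, because variations supported in the first half alone already give a surjective endpoint derivative.

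Second, openness of regularity does not by itself give the exact endpoint after analytic smoothing. Also, Chow--Rashevskii supplies only a piecewise control, so an exact analytic \(S_0\) is the output of this step, not its input. If you perturb a possibly singular \(S_0\) to a nearby regular analytic control and then correct it by the inverse function theorem, the correction can fail. The conditioning of the derivative may degenerate as fast as the endpoint error shrinks. Use the paper's order instead:
\begin{enumerate}
  \item Build a fixed regular piecewise control that ends exactly at \(F_*\).
  \item Approximate it and eight regular variations by polynomials in \(L^1\).
  \item Gronwall's inequality then makes the endpoint derivative converge to that fixed isomorphism.
  \item Only then does the quantitative inverse function theorem restore \(G(1)=F_*\).
\end{enumerate}
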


\begin{proof}
Write \(\mathfrak s=\Symz(3)\).  We first construct an endpoint-regular
horizontal path for
\[
  \dot G=SG,
  \qquad
  S\in\mathfrak s,
  \qquad
  G(0)=\Id.
\]
Choose \(q\ne0\) such that \(F_*q\) is not a negative multiple of \(q\).
Such a \(q\) exists: otherwise linearity would force \(F_*\) to be a negative
scalar matrix, contradicting \(\det F_*=1\).  With
\[
  e=\frac q{\abs q}+\frac{F_*q}{\abs{F_*q}}
\]
we have \(e\cdot q>0\) and \(e\cdot F_*q>0\).  Thus both endpoints belong to
\[
  \Omega_{e,q}
  =
  \{G\in\SL(3,\R):e\cdot Gq>0\}.
\]
The orbit map
\[
  \pi_q:\SL(3,\R)\longrightarrow\R^3\setminus\{0\},
  \qquad
  \pi_q(G)=Gq,
\]
is a homogeneous fiber bundle.  In coordinates with \(q=e_1\), its fiber is
\(\R^2\rtimes\SL(2,\R)\), hence is connected.  Its restriction over the
contractible half-space \(\{y:e\cdot y>0\}\) is therefore trivial, so
\(\Omega_{e,q}\) is connected.

The right-invariant distribution
\[
  \mathcal H_G=\{SG:S\in\mathfrak s\}
\]
is bracket generating.  Indeed, if
\(D=\operatorname{diag}(d_1,d_2,d_3)\in\mathfrak s\) has distinct diagonal
entries, then
\[
  [D,E_{ij}+E_{ji}]
  =
  (d_i-d_j)(E_{ij}-E_{ji}).
\]
Thus \([\mathfrak s,\mathfrak s]\) contains
\(\mathfrak{so}(3)\), while
\(\mathfrak s\oplus\mathfrak{so}(3)=\mathfrak{sl}(3)\).  The
Chow--Rashevskii theorem \cite{AgrachevSachkov2004}, applied on the connected
open manifold \(\Omega_{e,q}\), gives a piecewise smooth horizontal path from
\(\Id\) to \(F_*\) lying entirely in \(\Omega_{e,q}\).

We next make the endpoint regular without losing the half-space constraint.
Take
\[
  D=\delta\operatorname{diag}(d_1,d_2,d_3)\in\mathfrak s,
  \qquad
  d_i\ne d_j\quad(i\ne j),
\]
where \(\delta>0\) is small.  Four constant pulses, in chronological order
\(L,D,K,-D\), have endpoint
\[
  \Phi(K,L)=e^{-D}e^Ke^De^L,
  \qquad
  K,L\in\mathfrak s.
\]
At \((K,L)=(0,0)\) this is an identity loop, and
\[
  D\Phi(0,0)[K,L]
  =
  \operatorname{Ad}_{e^{-D}}K+L.
\]
For \(i\ne j\),
\[
  \operatorname{Ad}_{e^{-D}}(E_{ij}+E_{ji})
  =
  e^{-\delta(d_i-d_j)}E_{ij}
  +
  e^{\delta(d_i-d_j)}E_{ji}.
\]
Together with \(E_{ij}+E_{ji}\), these matrices span both \(E_{ij}\) and
\(E_{ji}\).  Including the two diagonal trace-free directions gives
\[
  \operatorname{Ad}_{e^{-D}}\mathfrak s+\mathfrak s
  =
  \mathfrak{sl}(3).
\]
Choose eight pairs
\[
  (K_j,L_j)\in\mathfrak s\times\mathfrak s,
  \qquad 1\le j\le8,
\]
whose images under \(D\Phi(0,0)\) form a basis of
\(\mathfrak{sl}(3)\).  For \(\delta\) and the parameters sufficiently small,
the pulse family remains in \(\Omega_{e,q}\).  Insert the base identity loop
before the horizontal path to \(F_*\).  Propagation through the remaining
path maps endpoint variations by an invertible linear map.  Compactness and
the strict half-space inequality along the following path also keep the
whole concatenated small-parameter family inside \(\Omega_{e,q}\).
Consequently the resulting eight-parameter endpoint derivative is an
isomorphism.

Let \(\bar S\) denote the resulting piecewise smooth control, let
\(H_1,\ldots,H_8\) denote its regular variations, and let \(\bar G\) be the
base path.  There is a strict margin
\begin{equation}\label{eq:piecewise-halfspace-margin}
  \min_{0\le t\le1}e\cdot\bar G(t)q
  =:3m_0>0.
\end{equation}
First mollify \(\bar S,H_1,\ldots,H_8\) in \(L^1(0,1)\), preserving their
values in \(\mathfrak s\), and then approximate the resulting smooth controls
uniformly by \(\mathfrak s\)-valued polynomials
\[
  P_0,P_1,\ldots,P_8.
\]
Equivalently, the polynomials may be chosen directly \(L^1\)-close to the
piecewise controls.  Let \(G_\theta\) be driven by
\[
  P_0+\sum_{j=1}^8\theta_jP_j.
\]
The integral equations for \(G_\theta\) and its parameter derivatives,
together with Gronwall's inequality, show that, as all nine approximation
errors tend to zero,
\[
  G_0(1)\longrightarrow F_*,
  \qquad
  D_\theta G_\theta(1)\big|_{\theta=0}
  \longrightarrow
  D\mathcal P_{\bar S}(H_1,\ldots,H_8),
\]
where \(\mathcal P\) is the control endpoint map.  The limit on the right is
an isomorphism.  In a chart of \(\SL(3,\R)\) centered at \(F_*\), the
quantitative inverse function theorem therefore supplies
\(\theta_*\to0\) such that \(G_{\theta_*}(1)=F_*\).  Recenter at
\(\theta_*\), and denote the resulting polynomial family by \(S_\theta\).
Then
\begin{equation}\label{eq:analytic-affine-endpoint}
  G_0(1)=F_*,
  \qquad
  D_\theta G_\theta(1)\big|_{\theta=0}:
  \R^8\longrightarrow T_{F_*}\SL(3,\R)
  \quad\hbox{is an isomorphism}.
\end{equation}
The same \(L^1\) stability, \eqref{eq:piecewise-halfspace-margin}, and a
shrinking of the parameter ball give
\begin{equation}\label{eq:control-halfspace-margin}
  e\cdot G_\theta(t)q\ge2m_0
  \qquad
  (|\theta|<\rho_0,\ 0\le t\le1).
\end{equation}
The family \(S_\theta\) is jointly analytic because all the \(P_j\) are
polynomials.

We finally impose the skew part forced by a moving unit-eigenvalue Beltrami
trajectory.  For \((\theta,\varepsilon)\) near \((0,0)\), define
\[
  v_{\theta,\varepsilon}(t)=\varepsilon G_\theta(t)q,
  \qquad
  A_{\theta,\varepsilon}
  =
  S_\theta+\frac12[v_{\theta,\varepsilon}]_\times,
\]
and let
\[
  \dot F_{\theta,\varepsilon}
  =
  A_{\theta,\varepsilon}F_{\theta,\varepsilon},
  \qquad
  F_{\theta,\varepsilon}(0)=\Id.
\]
At \(\varepsilon=0\), \(F_{\theta,0}=G_\theta\).  Hence, in a chart
\(\chi\) with \(\chi(F_*)=0\), the map
\[
  \mathcal F(\theta,\varepsilon)
  =
  \chi(F_{\theta,\varepsilon}(1))
\]
satisfies
\[
  \mathcal F(0,0)=0,
  \qquad
  D_\theta\mathcal F(0,0)\in\operatorname{GL}(8,\R).
\]
The analytic implicit function theorem yields an analytic
\(\theta=\beta(\varepsilon)\), with \(\beta(0)=0\), such that
\[
  F_{\beta(\varepsilon),\varepsilon}(1)=F_*.
\]
Fix a sufficiently small \(\varepsilon>0\) and recenter the eight parameters
at \(\beta(\varepsilon)\).  Invertibility of the endpoint derivative
persists.  After decreasing \(\rho\),
\eqref{eq:control-halfspace-margin} gives, for every \(|\eta|<\rho\),
\[
  e\cdot v_\eta(t)
  =
  \varepsilon
  e\cdot G_{\beta(\varepsilon)+\eta}(t)q
  \ge
  \varepsilon m_0
  =:m>0.
\]
Thus, if \(0\le t_1<t_2\le1\),
\[
  e\cdot\bigl(x_\eta(t_2)-x_\eta(t_1)\bigr)
  =
  \int_{t_1}^{t_2}e\cdot v_\eta(s)\,ds
  \ge
  m(t_2-t_1)>0.
\]
Every \(x_\eta\) is consequently injective and regular, hence an embedded
analytic arc, and its velocity never vanishes.  Finally,
\[
  \sup_{|\eta|<\rho}\sup_{0\le t\le1}\abs{x_\eta(t)}
  \le
  \varepsilon
  \sup_{\substack{|\theta|<\rho_0\\0\le t\le1}}
  \norm{G_\theta(t)}\abs q.
\]
Choosing \(\varepsilon\) smaller places the whole family in any prescribed
ball.
\end{proof}

\begin{proposition}[Parametric Beltrami extension of the control arc]
\label{lem:curve-Beltrami}
For the family furnished by Lemma~\ref{lem:embedded-control}, after decreasing
\(\rho\) there exist a common neighborhood \(U\) of the arcs
\(x_\eta([0,1])\) and a vector field
\[
  (\eta,y)\longmapsto V_\eta(y)
\]
that is jointly real-analytic on \(B_\rho(0)\times U\) and satisfies
\[
  \nabla\times V_\eta=V_\eta,
  V_\eta(x_\eta(t))=v_\eta(t),
  \qquad
  \nabla V_\eta(x_\eta(t))=A_\eta(t)
  \quad(0\le t\le1).
\]
Consequently,
\[
  x_\eta(t)=\phi^{V_\eta}_t(0)
\]
and
\[
  D\phi^{V_\eta}_1(0)=F_\eta(1).
\]
\end{proposition}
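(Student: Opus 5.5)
The plan is to build, for each \(\eta\), an analytic ribbon through the arc \(x_\eta\). On that ribbon I will pose a Cauchy problem for \(\nabla\times V=V\) whose data encode \(v_\eta\) and \(A_\eta\). I will then solve it by the Cauchy--Kovalevskaya theorem with \(\eta\) carried as an extra analytic parameter, and check that the first jet of the solution along the arc is forced to equal \((v_\eta,A_\eta)\).

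\emph{Ribbon and data.} Since \(e\cdot v_\eta\ge m>0\), I fix a constant unit vector \(n\perp e\), chosen so that after decreasing \(\rho\) it is uniformly transverse to every \(v_\eta(t)\). I set
\[
  \Sigma_\eta=\{\sigma_\eta(t,s)=x_\eta(t)+sn:\ t\in(-\delta,1+\delta),\ |s|<\delta\},
\]
extending \(x_\eta\) analytically slightly beyond \([0,1]\). This is an embedded analytic surface, since the monotone \(e\)-projection makes the arc embedded, and it depends analytically on \(\eta\). On it I prescribe the tangential part of
\[
  w_\eta(t,s)=v_\eta(t)+sA_\eta(t)n .
\]
Following the Cauchy problem of \cite{EncisoPeraltaSalas2012}, the normal component of \(V\) on \(\Sigma_\eta\) is then not free. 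It must equal the surface curl of the tangential data, which is the normal component of \(\nabla\times V\). So I define the Cauchy datum as the tangential part of \(w_\eta\) together with this forced normal component. Cauchy--Kovalevskaya, applied to the noncharacteristic system obtained from \(\nabla\times V=V\) (with \(\nabla\cdot V=0\) as a consequence), gives a unique analytic solution near \(\Sigma_\eta\). Parameter dependence is handled by treating \(\eta\) as additional independent variables with trivial evolution. Joint analyticity of all data on the compact set \(\overline{B_{\rho/2}}\times[0,1]\) gives uniform majorants, hence a common neighborhood \(U\) after shrinking \(\rho\).

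\emph{Jet verification.} I must show that \(V_\eta(x_\eta(t))=v_\eta(t)\) and \(\nabla V_\eta(x_\eta(t))=A_\eta(t)\). Tangential components along \(s=0\) agree by construction. For the normal component at \(s=0\) I compute the surface curl of the tangential data. It involves \(\partial_t w=\dot v_\eta=S_\eta v_\eta\) and \(\partial_s w=A_\eta n\), and therefore equals the normal component of the "curl" of the matrix \(A_\eta\). Since the skew part of \(A_\eta\) is \(\frac12[v_\eta]_\times\), that curl is exactly \(v_\eta\). This identity is the first-jet compatibility built into Lemma~\ref{lem:embedded-control}, so the normal component also matches.

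Next, the derivatives of \(V_\eta\) at \(x_\eta(t)\) in the directions \(v_\eta\) (equivalently \(\partial_t\)) and \(n\) equal \(A_\eta v_\eta\) and \(A_\eta n\), because they are tangential derivatives of the data. The remaining derivative, in the direction \(\nu\) normal to \(\Sigma_\eta\), is determined by the linear equations \(\nabla\times V=V\) and \(\nabla\cdot V=0\) once the tangential derivatives are known. The vector \(A_\eta\nu\) solves these same equations, because \(\tr A_\eta=0\) and the skew part of \(A_\eta\) is \(\frac12[v_\eta]_\times\). Uniqueness then gives \(\partial_\nu V_\eta=A_\eta\nu\), and hence \(\nabla V_\eta=A_\eta\) along the arc.

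\emph{Flow conclusions.} The identity \(V_\eta(x_\eta)=v_\eta=\dot x_\eta\) together with ODE uniqueness gives \(x_\eta(t)=\phi_t^{V_\eta}(0)\). The linearized flow \(D\phi_t^{V_\eta}(0)\) solves \(\dot Y=\nabla V_\eta(x_\eta)Y=A_\eta Y\) with \(Y(0)=\Id\), so it equals \(F_\eta(t)\), and in particular \(D\phi^{V_\eta}_1(0)=F_\eta(1)\).

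I expect the main obstacle to be the compatibility step. One must make the Beltrami Cauchy problem well posed, meaning the forced normal component has to be consistent and the surface noncharacteristic, while simultaneously matching the full gradient. A second difficulty is securing a neighborhood that is uniform in \(\eta\). If the normal-component bookkeeping becomes awkward, the fallback is to adjust \(w_\eta\) by \(O(s^2)\) terms, which leaves the jet at \(s=0\) unchanged.
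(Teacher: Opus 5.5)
Your proposal has a genuine gap in the jet verification, at the claim that \(D_nV_\eta=A_\eta n\) along the arc ``because it is a tangential derivative of the data.'' Your Cauchy datum has tangential part \(P_Tw_\eta\), but its normal component is the forced surface curl \(g_\eta\), not \(\langle w_\eta,\nu\rangle=s\langle A_\eta n,\nu\rangle\). Put \(a=\langle w_\eta,v_\eta\rangle\) and \(b=\langle w_\eta,n\rangle\). Your compatibility computation at order \(s^0\) is correct, but in fact \(\partial_tb-\partial_sa=s\,\tfrac{d}{dt}\langle A_\eta n,n\rangle\) exactly. Hence
\[
  \langle D_nV_\eta,\nu\rangle\big|_{s=0}
  =\frac{\tfrac{d}{dt}\langle A_\eta n,n\rangle}{\abs{v_\eta\times n}},
  \quad\text{whereas you need}\quad
  \langle A_\eta n,\nu\rangle
  =\frac{\langle A_\eta n,v_\eta\times n\rangle}{\abs{v_\eta\times n}}.
\]
This entry of the trace-free symmetric part is not fixed by the Beltrami relations. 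Your uniqueness argument for \(\partial_\nu V_\eta\) therefore cannot repair it; it only propagates the wrong column.

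The simplest case already shows the failure. Take \(v\equiv e_1\), \(S\equiv0\), \(n=e_2\). The datum is the constant \(e_1\) on \(\{x_3=0\}\), and its unique Beltrami extension is \((\cos x_3,-\sin x_3,0)\). Its gradient at the origin is \(-E_{23}\), not \(A=\tfrac12(E_{32}-E_{23})\). So in general \(D\phi_1^{V_\eta}(0)\ne F_\eta(1)\).

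The paper avoids this in two steps. It prescribes a tangent gradient field \(w_\eta=\nabla_{\Sigma_\eta}f_\eta\), so that \(V_\eta\cdot N_\eta=0\) on the ribbon and \(\langle D_{E_\eta}V_\eta,N_\eta\rangle=\operatorname{II}_\eta(E_\eta,v_\eta)\). It then steers the ribbon direction \(E_\eta\) by an ODE so that this mixed second fundamental form equals \(\langle A_\eta E_\eta,N_\eta\rangle\).

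In your flat-ribbon setup, the fallback you mention is not optional bookkeeping; it is required, and it must be a specific term. Add \(\tfrac{s^2}{2}c_\eta(t)v_\eta(t)\) to the tangential data, with
\[
  c_\eta=\abs{v_\eta}^{-2}\Bigl(\tfrac{d}{dt}\langle A_\eta n,n\rangle-\langle A_\eta n,v_\eta\times n\rangle\Bigr).
\]
This shifts \(\partial_sg_\eta(t,0)\) by exactly the missing amount. It leaves \(V_\eta(x_\eta)\) and the tangential part of \(D_nV_\eta\) unchanged.

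You also need one further check. Your data are not tangent, so the tangent-data theorem of \cite{EncisoPeraltaSalas2012} does not apply verbatim. You must show that the normal component of the curl equation propagates from the curl--divergence Cauchy--Kovalevskaya system.

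With these two repairs your argument would close. Your ruled ribbon \(x_\eta(t)+sn\) with \(n\perp e\) is then globally embedded by the monotone \(e\)-projection, which is simpler than the paper's adapted frame.
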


\begin{proof}
All the data extend analytically to a common open interval containing
\([0,1]\).  The half-space estimate in
Lemma~\ref{lem:embedded-control} gives a uniform lower bound for
\(\abs{v_\eta}\).  Put
\[
  T_\eta=\frac{v_\eta}{\abs{v_\eta}}.
\]
Because \(v_\eta(0)=v_0\) is independent of \(\eta\), choose one unit vector
\(E_*\perp T_\eta(0)\) and solve, jointly in \((\eta,t)\),
\begin{equation}\label{eq:adapted-ribbon-frame}
  E_\eta'
  =
  -\ip{E_\eta}{T_\eta'}T_\eta
  +
  \ip{N_\eta}{A_\eta E_\eta}N_\eta,
  \qquad
  E_\eta(0)=E_*,
  \qquad
  N_\eta=T_\eta\times E_\eta.
\end{equation}
The equation preserves \(\abs{E_\eta}=1\) and
\(E_\eta\perp T_\eta\).  Extend \(t\) slightly past the endpoints and then
use strict monotonicity of \(e\cdot x_\eta(t)\), compactness, and the
tubular-neighborhood theorem.  After shrinking \(\rho\), there is one
\(s_0>0\) such that
\[
  \sigma_\eta(t,s)=x_\eta(t)+sE_\eta(t),
  \qquad
  \abs s<s_0,
\]
is a jointly analytic family of embedded oriented ribbons.  Along the central
curve its unit normal is \(N_\eta\), and the mixed second fundamental form
satisfies
\begin{equation}\label{eq:ribbon-mixed-II}
\begin{aligned}
  \operatorname{II}_\eta(E_\eta,v_\eta)
  &=
  \ip{\partial_s\partial_t\sigma_\eta}{N_\eta}\\
  &=
  \ip{E_\eta'}{N_\eta}
  =
  \ip{A_\eta E_\eta}{N_\eta}.
\end{aligned}
\end{equation}

On the ribbon set
\[
  f_\eta(t,s)
  =
  \int_0^t\abs{v_\eta(r)}^2\,dr
  +
  \frac12
  \ip{A_\eta(t)E_\eta(t)}{E_\eta(t)}s^2,
  \qquad
  w_\eta=\nabla_{\Sigma_\eta}f_\eta.
\]
Then
\[
  w_\eta(x_\eta(t))=v_\eta(t),
  \qquad
  j_{\Sigma_\eta}^*(w_\eta^\flat)=df_\eta,
\]
and hence
\[
  d\bigl(j_{\Sigma_\eta}^*w_\eta^\flat\bigr)=0.
\]

We next verify the prescribed first jet.  Differentiation along the central
curve gives
\[
  D_{v_\eta}w_\eta=v_\eta'=A_\eta v_\eta.
\]
For the other tangential column, the surface Hessian and
\eqref{eq:ribbon-mixed-II} yield
\begin{align*}
  \ip{D_{E_\eta}w_\eta}{E_\eta}
  &=
  \ip{A_\eta E_\eta}{E_\eta},\\
  \ip{D_{E_\eta}w_\eta}{v_\eta}
  &=
  \operatorname{Hess}_{\Sigma_\eta}
  f_\eta(E_\eta,v_\eta)\\
  &=
  \ip{A_\eta v_\eta}{E_\eta}
  =
  \ip{A_\eta E_\eta}{v_\eta},\\
  \ip{D_{E_\eta}w_\eta}{N_\eta}
  &=
  \operatorname{II}_\eta(E_\eta,v_\eta)
  =
  \ip{A_\eta E_\eta}{N_\eta}.
\end{align*}
The penultimate equality in the second line uses
\[
  \operatorname{skew}A_\eta
  =
  \frac12[v_\eta]_\times,
  \qquad
  [v_\eta]_\times v_\eta=0.
\]
Thus the two tangential columns of the desired ambient derivative are exactly
those of \(A_\eta\).

The Cauchy theorem for Beltrami fields
\cite[Theorem~3.1]{EncisoPeraltaSalas2012} states that an analytic tangent
field \(w\) on an embedded oriented analytic surface \(\Sigma\), with
\(d(j_\Sigma^*w^\flat)=0\), extends uniquely to an analytic field \(V\) near
\(\Sigma\) satisfying
\[
  \nabla\times V=V,
  \qquad
  V|_\Sigma=w.
\]
Apply it to \((\Sigma_\eta,w_\eta)\).  If
\(B_\eta=\nabla V_\eta\) on the central curve, then
\[
  (B_\eta-A_\eta)v_\eta=0,
  \qquad
  (B_\eta-A_\eta)E_\eta=0.
\]
Moreover, the Beltrami equation and its divergence imply
\[
  \operatorname{skew}B_\eta
  =
  \frac12[v_\eta]_\times
  =
  \operatorname{skew}A_\eta,
  \qquad
  \tr B_\eta=0=\tr A_\eta.
\]
Hence \(B_\eta-A_\eta\) is symmetric, annihilates the tangent plane, and has
zero trace; therefore it vanishes.

It remains to justify the common domain and parameter dependence, which are
not part of the pointwise statement just cited.  Extend \(N_\eta\)
analytically over the ribbon and use the tubular maps
\[
  \Theta_\eta(t,s,n)
  =
  \sigma_\eta(t,s)+nN_\eta(t,s).
\]
The shrinking is performed in the following order.  First choose a
compactly contained parameter ball and a common complex neighborhood of the
slightly extended \(t\)-interval on which the control data are bounded.
Next choose \(s_0\) so that all complexified ribbon maps are nonsingular on
one fixed strip.  On that strip use the analytic sup norm
\[
  \norm{h}_{\varrho}
  =
  \sup_{\substack{|\operatorname{Im}t|<\varrho\\
                   |\operatorname{Im}s|<\varrho}}
  \abs{h(t,s)}
\]
for the pulled-back coefficients and Cauchy data.  In signed-normal
coordinates, the curl equation together with its divergence consequence has
the Cauchy--Kovalevskaya form
\begin{equation}\label{eq:parametric-CK-normal-form}
  M_\eta(t,s,n)\partial_nU_\eta
  =
  \mathcal G_\eta
  \bigl(t,s,n,U_\eta,\partial_tU_\eta,\partial_sU_\eta\bigr).
\end{equation}
The matrices and the right-hand side are jointly analytic.  At \(n=0\) the
surface is noncharacteristic; compactness and the common ribbon
parametrization give the uniform bound
\[
  \inf_{\eta,t,s}
  \abs{\det M_\eta(t,s,0)}
  \ge c_0>0.
\]
After decreasing the complex strip once, the same bound holds throughout a
fixed normal slab.

The standard majorant-series proof applied with the preceding analytic norms
now gives one normal radius \(n_0>0\), independent of \(\eta\), and a solution
\(U_\eta\) jointly analytic in \((\eta,t,s,n)\).  In particular, the first
parameter derivative \(Z_j=\partial_{\eta_j}U_\eta\) satisfies the linearized
Cauchy system obtained from \eqref{eq:parametric-CK-normal-form}:
\begin{align*}
  M_\eta\partial_nZ_j
  &=
  D_U\mathcal G_\eta\,Z_j
  +
  D_{\partial_tU}\mathcal G_\eta\,\partial_tZ_j
  +
  D_{\partial_sU}\mathcal G_\eta\,\partial_sZ_j\\
  &\quad
  +
  \partial_{\eta_j}\mathcal G_\eta
  -
  (\partial_{\eta_j}M_\eta)\partial_nU_\eta,
\end{align*}
with the differentiated analytic Cauchy datum at \(n=0\).  The same
majorant bounds control this system uniformly.

Finally shrink \(\rho\) so that one fixed physical tube \(U\) around the base
arc is contained in every image
\(\Theta_\eta(\{|s|<s_0,\ |n|<n_0\})\) and contains every central arc.
Uniqueness patches the local solutions along the strip, and composition with
\(\Theta_\eta^{-1}\) proves joint analyticity on
\(B_\rho(0)\times U\).

Finally, \(x_\eta'=V_\eta(x_\eta)\), and both
\(D\phi^{V_\eta}_t(0)\) and \(F_\eta(t)\) solve
\[
  \dot G=A_\eta(t)G,
  \qquad
  G(0)=\Id.
\]
Uniqueness gives the last assertion.
\end{proof}

\begin{lemma}[Regular zeros and deformation endpoints]
\label{lem:regular-endpoint-stability}
Let \(B_r\subset\R^8\), let
\(K_0\Subset\operatorname{int}K\subset\R^3\), and let
\[
  Y\in C^1\bigl(B_r;C^2(K;\R^3)\bigr)
\]
be a family of divergence-free vector fields.  Assume that, for every
\(\eta\in B_r\), the \(Y_\eta\)-trajectory from the origin is defined on
\([0,1]\) and remains in \(K_0\).  Fix an analytic chart
\[
  \chi:\mathcal V\subset\SL(3,\R)\longrightarrow\R^8
\]
containing all the deformation endpoints under consideration, and set
\[
  \mathcal E_Y(\eta)
  =
  \chi\bigl(D\phi^{Y_\eta}_1(0)\bigr).
\]
After decreasing \(r\), the map
\[
  Y\longmapsto\mathcal E_Y
\]
is continuous from \(C^1(B_r;C^2(K))\) to
\(C^1(B_r;\R^8)\).  In particular, if
\[
  \mathcal E_Y(0)=0,
  \qquad
  D\mathcal E_Y(0)\in\operatorname{GL}(8,\R),
\]
then every sufficiently small divergence-free perturbation
\(\widetilde Y\) in \(C^1(B_r;C^2(K))\) has a unique parameter
\(\widetilde\eta\) in a fixed smaller ball such that
\[
  \mathcal E_{\widetilde Y}(\widetilde\eta)=0.
\]
Moreover, \(\widetilde\eta\to0\) as \(\widetilde Y\to Y\), and
\(D\mathcal E_{\widetilde Y}(\widetilde\eta)\) remains invertible.
\end{lemma}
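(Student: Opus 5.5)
The argument splits into a continuity statement for the endpoint map and a quantitative inverse function theorem applied to it.

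\emph{Continuity.} For \(\eta\in B_r\), write \(\gamma_\eta(t)=\phi^{Y_\eta}_t(0)\) and \(G_\eta(t)=D\phi^{Y_\eta}_t(0)\). They solve the coupled system
\[
  \dot\gamma=Y_\eta(\gamma),\qquad \dot G=\nabla Y_\eta(\gamma)G,\qquad \gamma(0)=0,\ G(0)=\Id .
\]
Since \(Y\in C^1(B_r;C^2(K))\), the right-hand side is \(C^1\) in \((\gamma,G)\) and \(C^1\) in \(\eta\). The standard parameter-dependence theorem for ODEs then makes \(\eta\mapsto(\gamma_\eta,G_\eta)\) a \(C^1\) map into \(C^0([0,1])\). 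Its \(\eta\)-derivative solves the variational system
\[
  \dot\xi=\nabla Y_\eta(\gamma)\xi+\partial_\eta Y_\eta(\gamma),\qquad
  \dot\Xi=\bigl(\nabla^2Y_\eta(\gamma)\xi\bigr)G+\nabla Y_\eta(\gamma)\Xi+\partial_\eta\nabla Y_\eta(\gamma)G .
\]
This is where \(C^2\) in space is needed: the term \(\nabla^2 Y_\eta\) must be continuous in \(\eta\). The derivative \(\partial_\eta\nabla Y_\eta\) is continuous because \(Y\in C^1(B_r;C^2)\).

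To get continuity in \(Y\), take \(\widetilde Y\) close to \(Y\) in \(C^1(B_r;C^2(K))\). Trajectories of \(Y\) stay in \(K_0\) and \(\operatorname{dist}(K_0,\partial K)>0\), so a Gronwall estimate shows that \(\widetilde\gamma_\eta\) stays in \(K\) on \([0,1]\), uniformly in \(\eta\). Then \(\abs{\widetilde\gamma-\gamma}\) and \(\abs{\widetilde G-G}\) are bounded by a constant times \(\norm{\widetilde Y-Y}_{C^1}\). Applying the same Gronwall argument to the variational systems bounds \(\widetilde\xi-\xi\) and \(\widetilde\Xi-\Xi\). That argument uses the bounded differences of the coefficients, the uniform continuity of \(\nabla^2Y\) on the compact set \(\overline{B_{r'}}\times K\), and the convergence \(\widetilde\gamma\to\gamma\). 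Here \(r\) is decreased to some \(r'<r\) so that all norms are sup norms over a compact parameter set. Divergence-freeness gives \(\det G=1\), so the endpoints lie in \(\SL(3,\R)\). Finally, \(G_\eta(1)\) converges uniformly to \(F\) at \(\eta=0\) and the chart domain \(\mathcal V\) is open. After shrinking \(r\), all endpoints therefore stay in a compact subset of \(\mathcal V\), and composing with the analytic chart \(\chi\) preserves \(C^1\) convergence. This gives continuity of \(Y\mapsto\mathcal E_Y\) into \(C^1(B_r;\R^8)\).

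\emph{Regular zero.} Set \(L=D\mathcal E_Y(0)\in\operatorname{GL}(8,\R)\). Because \(D\mathcal E_Y\) is continuous, we can choose \(r_1\) with \(\norm{D\mathcal E_Y(\eta)-L}\le(4\norm{L^{-1}})^{-1}\) on \(B_{r_1}\). For \(\widetilde Y\) close enough to \(Y\), continuity gives \(\norm{D\mathcal E_{\widetilde Y}-D\mathcal E_Y}_{C^0(B_{r_1})}\le(4\norm{L^{-1}})^{-1}\) and \(\abs{\mathcal E_{\widetilde Y}(0)}\le \epsilon\). Consider the map
\[
  \Psi(\eta)=\eta-L^{-1}\mathcal E_{\widetilde Y}(\eta).
\]
It is a contraction with constant \(1/2\) on \(\overline{B_{r_1}}\). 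Taking \(\epsilon<r_1/(2\norm{L^{-1}})\), it also maps this ball into itself. The contraction mapping principle then gives a unique zero \(\widetilde\eta\) of \(\mathcal E_{\widetilde Y}\), with \(\abs{\widetilde\eta}\le2\norm{L^{-1}}\abs{\mathcal E_{\widetilde Y}(0)}\to0\). Invertibility of \(D\mathcal E_{\widetilde Y}(\widetilde\eta)\) follows from the Neumann series, since \(\norm{L^{-1}}\norm{D\mathcal E_{\widetilde Y}(\widetilde\eta)-L}\le1/2\).

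The main obstacle is the first step: getting uniform control of the \(\eta\)-derivative of the endpoint. The difference of the second-order terms \(\nabla^2\widetilde Y_\eta(\widetilde\gamma)-\nabla^2Y_\eta(\gamma)\) is small only through \(\norm{\widetilde Y-Y}_{C^2}\) together with uniform continuity of \(\nabla^2 Y\). This is exactly why the lemma is stated after decreasing \(r\), so that the parameter set is compact and the uniform continuity is available.
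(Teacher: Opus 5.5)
Your proposal is correct and follows the paper's proof essentially step for step. Like the paper, you use the coupled trajectory/deformation ODE and its variational system, where the \(D^2Y\) term is what requires spatial \(C^2\). You then use the buffer \(K_0\Subset\operatorname{int}K\) together with Gronwall on a smaller compact parameter ball to get \(C^1\) continuity of \(Y\mapsto\mathcal E_Y\). Finally, you apply the contraction \(\eta\mapsto\eta-L^{-1}\mathcal E_{\widetilde Y}(\eta)\) under a quarter-size derivative bound to obtain the unique regular zero. Your explicit constants (contraction factor \(1/2\), the bound \(\abs{\widetilde\eta}\le2\norm{L^{-1}}\abs{\mathcal E_{\widetilde Y}(0)}\), and the Neumann-series invertibility) only spell out what the paper leaves to ``the same estimates.''
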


\begin{proof}
Write
\[
  z_\eta(t)=\phi^{Y_\eta}_t(0),
  \qquad
  G_\eta(t)=D\phi^{Y_\eta}_t(0).
\]
They solve
\[
  \dot z_\eta=Y_\eta(z_\eta),
  \qquad
  \dot G_\eta=DY_\eta(z_\eta)G_\eta,
  \qquad
  z_\eta(0)=0,
  \quad
  G_\eta(0)=\Id.
\]
For
\[
  Q_i=\partial_{\eta_i}z_\eta,
  \qquad
  H_i=\partial_{\eta_i}G_\eta,
\]
differentiation gives
\begin{align*}
  \dot Q_i
  &=
  DY_\eta(z_\eta)Q_i
  +
  (\partial_{\eta_i}Y_\eta)(z_\eta),\\
  \dot H_i
  &=
  DY_\eta(z_\eta)H_i
  +
  D^2Y_\eta(z_\eta)[Q_i]G_\eta\\
  &\quad
  +
  D(\partial_{\eta_i}Y_\eta)(z_\eta)G_\eta,
\end{align*}
with zero initial data for \(Q_i\) and \(H_i\).  The buffer
\(K_0\Subset\operatorname{int}K\), continuous dependence for ODEs, and
Gronwall's inequality therefore imply, uniformly on a smaller parameter
ball,
\[
  \norm{\mathcal E_{\widetilde Y}-\mathcal E_Y}_{C^1}
  \longrightarrow0
  \quad\hbox{whenever}\quad
  \norm{\widetilde Y-Y}_{C^1(B_r;C^2(K))}
  \longrightarrow0.
\]
The same estimates keep all perturbed trajectories in \(K\) and all
deformation endpoints inside the chart.

For completeness, put \(A=D\mathcal E_Y(0)\).  Choose \(0<r_0<r\) so that
\[
  \sup_{|\eta|\le r_0}
  \norm{
    A^{-1}\bigl(D\mathcal E_Y(\eta)-A\bigr)
  }
  \le\frac14.
\]
For \(\widetilde Y\) sufficiently close to \(Y\), the map
\[
  \mathcal T(\eta)
  =
  \eta-A^{-1}\mathcal E_{\widetilde Y}(\eta)
\]
is a contraction of \(\overline B_{r_0}\) into itself.  Its unique fixed point
\(\widetilde\eta\) is the required zero.  The same estimates give
\(\widetilde\eta\to0\) and preserve invertibility of the derivative.
\end{proof}

\begin{proof}[Proof of Theorem~\ref{thm:global-SL-realization}]
Apply Lemma~\ref{lem:embedded-control} and
Proposition~\ref{lem:curve-Beltrami}, and choose a compact tubular neighborhood
\[
  K\Subset U\cap B_1(0)
\]
of the base arc, with a uniform buffer for the nearby family; here we used the
small-ball clause in Lemma~\ref{lem:embedded-control}.  The set \(K\) can be
chosen diffeomorphic to a closed three-ball, so \(\R^3\setminus K\) is
connected.  Put
\[
  Z_0=V_0,\qquad
  Z_j=\partial_{\eta_j}V_\eta\big|_{\eta=0}
  \quad(1\le j\le8).
\]
All nine fields solve \(\nabla\times Z_j=Z_j\).  The Euclidean Beltrami
Runge theorem \cite[Theorem~3.6]{EncisoPeraltaSalas2012} approximates them,
to arbitrary accuracy in \(C^2(K)\), by global Beltrami fields
\[
  \widetilde Z_0,\ldots,\widetilde Z_8
  \quad\hbox{on }\R^3.
\]

The local affine family
\[
  Z_\eta=Z_0+\sum_{j=1}^8\eta_jZ_j
\]
has at \(\eta=0\) the same deformation endpoint and the same endpoint
derivative as \(V_\eta\).  Its endpoint is therefore \(F_*\), and its endpoint
derivative is an isomorphism.  For the global affine family
\[
  \widetilde Z_\eta
  =
  \widetilde Z_0+\sum_{j=1}^8\eta_j\widetilde Z_j,
\]
choose an analytic chart
\[
  \chi:\mathcal V\subset\SL(3,\R)\longrightarrow\R^8,
  \qquad
  \chi(F_*)=0,
\]
and write
\[
\begin{aligned}
  \mathcal E(\eta)
  &=
  \chi\bigl(D\phi^{Z_\eta}_1(0)\bigr),\\
  \widetilde{\mathcal E}(\eta)
  &=
  \chi\bigl(D\phi^{\widetilde Z_\eta}_1(0)\bigr).
\end{aligned}
\]
Choose the Runge approximants so that
\[
  \max_{0\le j\le8}
  \norm{\widetilde Z_j-Z_j}_{C^2(K)}
  <\delta.
\]
For affine families this implies, on every fixed small parameter ball,
\[
  \norm{\widetilde Z-Z}_{C^1(B_r;C^2(K))}
  \le C_r\delta.
\]
Lemma~\ref{lem:regular-endpoint-stability} therefore gives, for \(\delta\)
sufficiently small, a parameter \(\eta_*\) such that
\[
  \widetilde{\mathcal E}(\eta_*)=0.
\]
After recentering at \(\eta_*\),
\[
  \widetilde{\mathcal E}(0)=0,
  \qquad
  D\widetilde{\mathcal E}(0)
  \in\operatorname{GL}(8,\R).
\]
Notice that the \(D^2Z_0\) term in the parameter variation of the deformation
equation is precisely why the Runge approximation is taken in \(C^2\).
Moreover, compactness and the strict inequality in
Lemma~\ref{lem:embedded-control} give a spacetime tube of the base orbit on
which \(e\cdot Z_0\) is uniformly positive.  After shrinking the parameter
ball and the Runge error, all nearby trajectories remain in this tube and
\(e\cdot\widetilde Z_\eta\) remains uniformly positive there.

The toral part of the inverse-localization theorem
\cite[Theorem~2.1]{EncisoPeraltaSalasTorres2017} says the following.  If
\(Z\) is an entire real field satisfying \(\nabla\times Z=Z\), then for every
\(\epsilon>0\) and integer \(m\ge0\), each sufficiently large odd \(N\)
admits a real toral field \(W_N\), with
\(\nabla\times W_N=NW_N\), such that
\[
  \norm{
    W_N\left(p+\frac{\cdot}{N}\right)-Z
  }_{C^m(B_1)}
  <\epsilon.
\]
Since \(K\Subset B_1\), apply this statement separately to the nine fixed
fields \(\widetilde Z_0,\ldots,\widetilde Z_8\), take \(m=2\), and take the
maximum of the nine thresholds.  Thus, for every sufficiently large odd
\(N\), there are fields at the same prescribed frequency,
\[
  \nabla\times W_{j,N}=N W_{j,N}
  \qquad(0\le j\le8)
\]
such that, simultaneously,
\[
  W_{j,N}\left(p+\frac{\cdot}{N}\right)
  \longrightarrow
  \widetilde Z_j
  \quad\hbox{in }C^2(K).
\]
To make the convergence hold along all sufficiently large odd integers,
rather than only along a selected sequence, take errors
\(\epsilon_k\downarrow0\), take the increasing common thresholds \(N_k\), and
use accuracy \(\epsilon_k\) whenever \(N_k\le N<N_{k+1}\).

Define
\[
\begin{aligned}
  W_{\eta,N}
  &=
  W_{0,N}+\sum_{j=1}^8\eta_jW_{j,N},\\
  \widehat Z_{\eta,N}(y)
  &=
  W_{\eta,N}\left(p+\frac yN\right),\\
  \mathcal E_N(\eta)
  &=
  \chi\bigl(D\phi^{\widehat Z_{\eta,N}}_1(0)\bigr).
\end{aligned}
\]
The common \(C^2(K)\) error tending to zero gives
\[
  \norm{
    \widehat Z_{\cdot,N}-\widetilde Z_{\cdot}
  }_{C^1(B_r;C^2(K))}
  \longrightarrow0
\]
on a fixed small parameter ball.  A second application of
Lemma~\ref{lem:regular-endpoint-stability} gives
\(\eta_N\to0\) with
\[
  \mathcal E_N(\eta_N)=0.
\]
Set
\[
  W_N=W_{\eta_N,N}.
\]
The same spacetime-tube argument, now using
\(\widehat Z_{\eta,N}\to\widetilde Z_\eta\) in \(C^1\), keeps the rescaled
trajectory in \(K\) and preserves the strict half-space velocity margin.  For
all sufficiently large \(N\), its physical image lies in the torus coordinate
ball \(p+K/N\).  Hence the physical trajectory cannot wrap around the torus;
it is embedded and has nonzero velocity.

It remains only to choose the physical clock.  Let
\[
  c_N
  =
  \begin{cases}
    \displaystyle\frac1{NT},&\nu=0,\\[8pt]
    \displaystyle
    \frac{\nu N}{1-e^{-\nu N^2T}},&\nu>0.
  \end{cases}
\]
Since \(\Delta W_N=-N^2W_N\) and the Beltrami nonlinearity is a gradient,
\[
  u_N(x,t)=c_N e^{-\nu N^2t}W_N(x),
  \qquad
  p_N(x,t)
  =
  -\frac12c_N^2e^{-2\nu N^2t}\abs{W_N(x)}^2
\]
is the asserted exact unforced solution.  Put
\[
  a_N(t)=c_Ne^{-\nu N^2t},
  \qquad
  \tau_N(t)=N\int_0^ta_N(s)\,ds.
\]
If \(\widehat\phi_N^\tau\) denotes the flow of
\(\widehat Z_{\eta_N,N}\), then, as long as the rescaled trajectory remains in
the coordinate tube,
\[
  X_N\left(p+\frac{y_0}{N},t\right)
  =
  p+\frac1N\widehat\phi_N^{\tau_N(t)}(y_0).
\]
The displayed choice of \(c_N\) gives
\[
  \tau_N(T)=1,
  \qquad
  \tau_N'(t)>0.
\]
Differentiating the conjugacy at \(y_0=0\) yields
\[
  \nabla_aX_N(p,T)
  =
  D\widehat\phi_N^1(0)
  =
  F_*.
\]
Because \(\tau_N\) is strictly increasing, the strict half-space monotonicity
of the rescaled orbit is preserved by physical time.  The physical orbit lies
in the injective coordinate ball \(p+K/N\), and is therefore embedded, with
\[
  u_N(X_N(p,t),t)\ne0
  \qquad(0\le t\le T).
\]

Conversely, Liouville's formula and \(\nabla\cdot u=0\) give
\[
  \det\nabla_aX(a,T)=1
\]
for every solution in this class.  Hence the endpoint set is exactly
\(\SL(3,\R)\).
\end{proof}

\subsection{Generic multipoint interpolation and its sharp obstructions}

The preceding construction uses only a small part of the lowest nontrivial
Fourier shell.  High-energy Beltrami eigenspaces have enough multiplicity to
interpolate finitely many first jets simultaneously.  The proof combines the
Euclidean Beltrami Runge theorem of
\cite{EncisoPeraltaSalas2012} with the toral inverse-localization theorem of
\cite{EncisoPeraltaSalasTorres2017}.

For a positive curl eigenvalue \(\lambda\), set
\[
  E_\lambda^+
  =
  \left\{
    W\in C^\omega(\T^3;\R^3):
    \nabla\times W=\lambda W
  \right\}.
\]
Write
\[
  \operatorname{Conf}_k(\T^3)
  =
  \left\{
    (x_1,\ldots,x_k)\in(\T^3)^k:
    x_i\ne x_j\ \text{for }i\ne j
  \right\}
\]
for the ordered configuration space, a connected real-analytic manifold.
For \(\boldsymbol x\in\operatorname{Conf}_k(\T^3)\), define
\[
  \mathcal J_{\lambda,\boldsymbol x}W
  =
  \left(
    W(x_\ell),
    \operatorname{sym}\nabla W(x_\ell)
  \right)_{\ell=1}^k
  \in
  \left(\R^3\oplus\Symz(3)\right)^k.
\]

\begin{theorem}[Generic multipoint Beltrami jet interpolation]
\label{thm:multipoint-jet}
Fix \(k\ge1\).  For every sufficiently large odd integer \(\lambda\), there
is a canonical rank-deficient set
\[
  \Sigma_{\lambda,k}
  :=
  \left\{
    \boldsymbol x\in\operatorname{Conf}_k(\T^3):
    \operatorname{rank}\mathcal J_{\lambda,\boldsymbol x}<8k
  \right\}.
\]
This is a proper real-analytic subset with
\[
  \dim_{\mathrm H}\Sigma_{\lambda,k}\le 3k-1.
\]
In particular, it has measure zero and empty interior, and
\(\mathcal J_{\lambda,\boldsymbol x}\) is surjective whenever
\(\boldsymbol x\notin\Sigma_{\lambda,k}\).  For every such
\(\boldsymbol x\) and every
\(H_1,\ldots,H_k\in\Symz(3)\), there is a
\(W\in E_\lambda^+\) satisfying
\[
  W(x_\ell)=0,
  \qquad
  \nabla W(x_\ell)=H_\ell
  \quad(1\le\ell\le k).
\]
\end{theorem}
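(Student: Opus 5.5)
The plan is to prove properness of \(\Sigma_{\lambda,k}\) by exhibiting a single configuration where the rank is full, and then to use analyticity and connectedness of \(\operatorname{Conf}_k(\T^3)\).

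\emph{Analytic structure.} The space \(E_\lambda^+\) is finite-dimensional, since it is spanned by the Fourier modes with \(|\xi|=\lambda\) and polarization in the positive helicity line. Fix a basis \(W_1,\ldots,W_D\) of it. Then \(\mathcal J_{\lambda,\boldsymbol x}\) is an \(8k\times D\) matrix whose entries are trigonometric polynomials in \(\boldsymbol x\). Hence \(\Sigma_{\lambda,k}\) is the common zero set of all \(8k\times8k\) minors, which is a real-analytic subset of the connected analytic manifold \(\operatorname{Conf}_k(\T^3)\).

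\emph{Key step: one good configuration.} Such a zero set either is everything or is a proper analytic subset. A proper real-analytic subset of a connected \(3k\)-manifold has dimension at most \(3k-1\), which gives the Hausdorff dimension bound, measure zero, and empty interior. So it suffices to find one \(\boldsymbol x\) with full rank. I will take well-separated points in a small coordinate ball around some \(p\), say \(x_\ell=p+y_\ell/\lambda\) with distinct fixed \(y_\ell\in\R^3\) (separation \(\gg1\)), and use inverse localization.

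\begin{itemize}
\item First, produce entire Euclidean Beltrami fields \(Z\) with \(\nabla\times Z=Z\) whose jets at \(y_1,\ldots,y_k\) realize any prescribed element of \((\R^3\oplus\Symz(3))^k\). Locally, near each \(y_\ell\), Lemma~\ref{lem:Beltrami} rescaled to eigenvalue \(1\) (replace \(x\) by \(x/\sqrt2\)) supplies eight fields whose jets at \(y_\ell\) span \(\R^3\oplus\Symz(3)\). Put such a local field on a small ball \(B_\ell\) around \(y_\ell\) and \(0\) on the other disjoint balls; this is a Beltrami field on \(\bigcup_\ell B_\ell\). The complement of a union of disjoint closed balls is connected, so the Runge theorem \cite[Theorem~3.6]{EncisoPeraltaSalas2012} approximates it in \(C^1\) by global fields. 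Doing this for a basis of the \(8k\)-dimensional target gives global fields \(Z^{(1)},\ldots,Z^{(8k)}\) whose jet matrix at \(\boldsymbol y\) is close to the identity, hence invertible.
\item Second, apply \cite[Theorem~2.1]{EncisoPeraltaSalasTorres2017} with \(m=1\) to each \(Z^{(i)}\), on a ball large enough to contain the \(y_\ell\) (after rescaling the ball if needed). For every large odd \(\lambda\) this gives \(W^{(i)}\in E_\lambda^+\) with \(W^{(i)}(p+\cdot/\lambda)\approx Z^{(i)}\) in \(C^1\).
\item The jets of \(W^{(i)}\) at \(x_\ell\) are the rescaled jets of \(Z^{(i)}\) at \(y_\ell\): values unchanged, gradients multiplied by \(\lambda\). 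Multiplying the rows by the invertible diagonal factor \(\operatorname{diag}(1,\lambda^{-1})\) does not change rank, so the rank is \(8k\) for all large odd \(\lambda\). The threshold is uniform because only finitely many fields are involved.
\end{itemize}

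\emph{Interpolation.} For \(\boldsymbol x\notin\Sigma_{\lambda,k}\), surjectivity gives \(W\in E_\lambda^+\) with \(W(x_\ell)=0\) and \(\operatorname{sym}\nabla W(x_\ell)=H_\ell\). Since \(\nabla\times W(x_\ell)=\lambda W(x_\ell)=0\), the skew part of \(\nabla W(x_\ell)\) vanishes, so \(\nabla W(x_\ell)=H_\ell\), exactly as in the proof of Lemma~\ref{lem:Beltrami}.

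The main obstacle is the key step. One must make sure the Runge approximation handles disconnected compacta with connected complement, and that inverse localization applies on a ball containing all the \(y_\ell\). If the theorem is only stated for \(B_1\), I would first shrink the configuration into \(B_1\) using the scaling freedom of the local construction.
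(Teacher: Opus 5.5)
Your proposal is correct and follows essentially the same route as the paper. The paper's proof has the same structure: Runge approximation on disjoint small balls, using the rescaled jets of Lemma~\ref{lem:Beltrami}, yields $8k$ Euclidean fields with independent $k$-point jets; inverse localization is applied at $x_\ell=p+y_\ell/\lambda$, with the derivative rows rescaled by $1/\lambda$; and analyticity of the maximal minors on the connected configuration space gives properness and the dimension bound. No separation of the $y_\ell$ is needed, and the eigenvalue-one Beltrami equation has no scaling freedom to invoke anyway. It suffices to take distinct $y_\ell$ in the unit ball with small disjoint balls around them, as the paper does, which also settles your concern about the radius in the inverse-localization theorem.
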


\begin{proof}
We first prove that global solutions of
\(\nabla\times U=U\) on \(\R^3\) realize arbitrary compatible first jets at
finitely many points.  If \(W\) is one of the entire periodic fields used in
Lemma~\ref{lem:Beltrami}, then \(U(y)=W(y/\sqrt2)\) satisfies
\(\nabla_y\times U=U\); the derivative coordinates of its jet differ only by
the nonzero factor \(1/\sqrt2\).  Hence the eight-dimensional isomorphism in
that lemma, followed by translation, realizes an arbitrary pair
\[
  (U(y),\operatorname{sym}\nabla U(y))
  \in\R^3\oplus\Symz(3)
\]
locally at one point.  Choose mutually disjoint small closed balls around
distinct \(y_1,\ldots,y_k\) in the Euclidean unit ball, and prescribe the
desired local field on each component.  This finite union is a locally finite
union of pairwise disjoint compact sets, each with connected complement, so
the Beltrami Runge theorem
\cite[Theorem~3.6]{EncisoPeraltaSalas2012} approximates this disconnected
local field arbitrarily well in \(C^1\) by a global Euclidean Beltrami field.
The image of the global \(k\)-point jet map is a linear subspace of the
finite-dimensional target
\((\R^3\oplus\Symz(3))^k\).  Since the image is dense, it is the whole target.
We may therefore choose global Euclidean fields
\(U_1,\ldots,U_{8k}\) whose \(k\)-point jets form a basis.

Fix a point \(p\in\T^3\).  The toral inverse-localization theorem
\cite[Theorem~2.1]{EncisoPeraltaSalasTorres2017} implies that, for every
sufficiently large odd integer \(\lambda\), each \(U_s\) has a counterpart
\(W_s\in E_\lambda^+\) for which
\[
  W_s\left(p+\frac{y}{\lambda}\right)
\]
is arbitrarily close to \(U_s(y)\) in \(C^1\) on the Euclidean unit ball.
Take the finitely many approximations with a common sufficiently small error.
At
\[
  x_\ell^{(0)}
  =
  p+\frac{y_\ell}{\lambda},
\]
form the physical jet matrix of \(W_1,\ldots,W_{8k}\), and multiply all of
its derivative rows by \(1/\lambda\).  The resulting normalized matrix is
arbitrarily close to the Euclidean basis matrix because
\[
  \nabla_y
  \left[
    W_s\left(p+\frac{y}{\lambda}\right)
  \right]
  =
  \frac1\lambda
  \nabla_xW_s\left(p+\frac{y}{\lambda}\right).
\]
It therefore has full rank, and so does the unnormalized physical jet
matrix.  Thus at least one \(8k\times8k\) minor of
\(\mathcal J_{\lambda,\boldsymbol x}\) is not identically zero.

In a Fourier basis of \(E_\lambda^+\), every such minor is a real-analytic
function of \(\boldsymbol x\).  The rank-deficient set is the zero set of the
sum of the squares of all maximal minors.  This sum is nonzero by the
preceding construction, so its zero set is a proper real-analytic subset,
which has measure zero and empty interior.  The standard dimension bound for
the zero set of a nontrivial real-analytic function on the \(3k\)-dimensional
configuration manifold gives
\(\dim_{\mathrm H}\Sigma_{\lambda,k}\le3k-1\).

Surjectivity gives a field with jets \((0,H_\ell)\).  At a zero of a Beltrami
field, the curl equation makes the antisymmetric part of its gradient vanish;
hence the prescribed symmetric gradient is the full gradient.
\end{proof}

\begin{proposition}[Frequency-independent genericity and stable right inverses]
\label{prop:multipoint-stability}
Let \(\lambda_0(k)\) be a threshold in
Theorem~\ref{thm:multipoint-jet}, and set
\[
  \mathcal G_k
  =
  \bigcap_{\substack{\lambda\ge\lambda_0(k)\\ \lambda\ {\rm odd}}}
  \left(
    \operatorname{Conf}_k(\T^3)\setminus\Sigma_{\lambda,k}
  \right).
\]
Then \(\mathcal G_k\) is a frequency-independent full-measure residual
subset of \(\operatorname{Conf}_k(\T^3)\), and
\(\mathcal J_{\lambda,\boldsymbol x}\) is surjective for every
\(\boldsymbol x\in\mathcal G_k\) and every odd
\(\lambda\ge\lambda_0(k)\).

Fix one such \(\lambda\), equip \(E_\lambda^+\) with its \(L^2\) inner
product, and equip the jet target with its Euclidean--Frobenius inner product.
On
\(\mathcal G_{\lambda,k}:=\operatorname{Conf}_k(\T^3)
\setminus\Sigma_{\lambda,k}\), the minimum-norm right inverse
\[
  \mathcal R_{\lambda,\boldsymbol x}
  =
  \mathcal J_{\lambda,\boldsymbol x}^*
  \left(
    \mathcal J_{\lambda,\boldsymbol x}
    \mathcal J_{\lambda,\boldsymbol x}^*
  \right)^{-1}
\]
depends real analytically on \(\boldsymbol x\).  Consequently, for every
compact \(K\subset\mathcal G_{\lambda,k}\), there is a finite constant
\(C_{K,\lambda}\) such that
\[
  \norm{\mathcal R_{\lambda,\boldsymbol x}\boldsymbol q}_{L^2(\T^3)}
  \le
  C_{K,\lambda}\norm{\boldsymbol q}
  \qquad
  (\boldsymbol x\in K).
\]
\end{proposition}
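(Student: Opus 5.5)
The proof has two independent parts: genericity of \(\mathcal G_k\), and analytic dependence of the right inverse at one fixed frequency.

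\emph{Genericity.} By Theorem~\ref{thm:multipoint-jet}, for each odd \(\lambda\ge\lambda_0(k)\) the set \(\Sigma_{\lambda,k}\) is a proper real-analytic subset of the connected real-analytic manifold \(\operatorname{Conf}_k(\T^3)\). It is the zero set of the nontrivial analytic function \(\Delta_\lambda=\sum(\text{maximal minors})^2\). It is closed, has empty interior and has measure zero. Hence each complement \(\operatorname{Conf}_k(\T^3)\setminus\Sigma_{\lambda,k}\) is open, dense and of full measure. The index set of odd integers \(\lambda\ge\lambda_0(k)\) is countable, so \(\mathcal G_k\) is a countable intersection of open dense sets. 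Baire's theorem applies because \(\operatorname{Conf}_k(\T^3)\) is an open subset of the compact manifold \((\T^3)^k\), hence completely metrizable. So \(\mathcal G_k\) is residual. A countable union of null sets is null, so \(\mathcal G_k\) has full measure. Surjectivity of \(\mathcal J_{\lambda,\boldsymbol x}\) on \(\mathcal G_k\) is immediate from the definition of \(\Sigma_{\lambda,k}\). No uniformity in \(\lambda\) is needed here, since each frequency is handled separately by the earlier theorem.

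\emph{Analyticity of the right inverse.} Fix \(\lambda\). The space \(E_\lambda^+\) is finite-dimensional: it is spanned by the Fourier modes \(e^{ik\cdot x}\) with \(|k|=\lambda\), each paired with its helical polarization. Choose an \(L^2\)-orthonormal real basis \(\Psi_1,\dots,\Psi_d\) of trigonometric Beltrami fields. In this basis \(\mathcal J_{\lambda,\boldsymbol x}\) is an \(8k\times d\) matrix whose entries are values and first derivatives of the \(\Psi_i\) at the points \(x_\ell\). These entries are trigonometric polynomials, so the matrix is real-analytic in \(\boldsymbol x\). The adjoint is then just the transpose, and the Gram matrix \(\mathcal J\mathcal J^*\) is real-analytic. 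On \(\mathcal G_{\lambda,k}\) the matrix \(\mathcal J\) has full row rank, so \(\det(\mathcal J\mathcal J^*)>0\) there. Cramer's rule then shows that \((\mathcal J\mathcal J^*)^{-1}\) is real-analytic on \(\mathcal G_{\lambda,k}\). Therefore \(\mathcal R_{\lambda,\boldsymbol x}\) is real-analytic, and it is a right inverse since \(\mathcal J\mathcal R=\Id\).

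\emph{Uniform bound.} For \(\boldsymbol q\) in the jet target we have
\[
  \norm{\mathcal R_{\lambda,\boldsymbol x}\boldsymbol q}_{L^2(\T^3)}^2
  =
  \ip{\boldsymbol q}{(\mathcal J\mathcal J^*)^{-1}\boldsymbol q}
  \le
  \sigma_{\min}(\boldsymbol x)^{-2}\norm{\boldsymbol q}^2,
\]
where \(\sigma_{\min}(\boldsymbol x)\) is the smallest singular value of \(\mathcal J_{\lambda,\boldsymbol x}\). The function \(\sigma_{\min}\) is continuous and positive on \(\mathcal G_{\lambda,k}\). On a compact \(K\subset\mathcal G_{\lambda,k}\) it therefore attains a positive minimum, which gives \(C_{K,\lambda}=(\min_K\sigma_{\min})^{-1}\).

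The only point requiring care is the topological one: the countable Baire argument must run on the noncompact configuration space. This is handled by completeness of \(\operatorname{Conf}_k(\T^3)\), or alternatively by working on an exhaustion by compact subsets.
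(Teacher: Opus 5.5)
Your proof is correct and follows the paper's argument step for step: each complement is open, dense and of full measure, and these are combined by Baire's theorem and countable subadditivity; the jet matrix is real-analytic in an \(L^2\)-orthonormal Fourier basis; the positive-definite Gram matrix is inverted analytically on the full-rank set; and compactness gives the uniform bound. Your identity \(\norm{\mathcal R_{\lambda,\boldsymbol x}\boldsymbol q}^2=\ip{\boldsymbol q}{(\mathcal J\mathcal J^*)^{-1}\boldsymbol q}\) only makes explicit the paper's one-line remark that the norm is bounded on compact sets. Identifying the adjoint with the transpose also requires Frobenius-orthonormal coordinates on the jet target, which is a harmless choice.
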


\begin{proof}
For each odd \(\lambda\ge\lambda_0(k)\), the complement of
\(\Sigma_{\lambda,k}\) is open, dense, and of full measure.  The asserted
properties of their countable intersection follow from the Baire theorem and
countable subadditivity of null sets.  In a fixed \(L^2\)-orthonormal Fourier
basis, the jet matrix is real-analytic in \(\boldsymbol x\).  On the full-rank
set its target Gram matrix is positive definite, so the displayed
Moore--Penrose formula is an analytic right inverse.  Its norm is bounded on
compact subsets.
\end{proof}

\begin{proposition}[Shell and time obstructions]
\label{prop:Beltrami-obstructions}
The generic qualifier in Theorem~\ref{thm:multipoint-jet} cannot be removed
for a fixed Fourier shell.  Moreover, within one curl eigenspace, independent
targets at several times cannot replace the one-terminal-time conclusion of
Corollary~\ref{cor:multipoint-realization}.
\begin{enumerate}
  \item The real eigenspace \(E_{\sqrt2}^+\) has dimension \(12\), and cannot
  interpolate arbitrary zero values and trace-free symmetric first jets at
  two points.
  \item If \(\lambda=\sqrt m\) with \(m\in\mathbb N\), every
  \(W\in E_\lambda^+\) satisfies
  \[
    W(x+\pi\boldsymbol 1)=(-1)^mW(x),
    \qquad
    \nabla W(x+\pi\boldsymbol 1)=(-1)^m\nabla W(x),
    \quad
    \boldsymbol 1=(1,1,1).
  \]
  Thus the jets at such a resonant pair cannot be prescribed independently.
  \item If an unforced Euler or Navier--Stokes solution remains in one fixed
  space \(E_\lambda^+\), then
  \[
    u(t)=u(0)\quad(\nu=0),
    \qquad
    u(t)=e^{-\nu\lambda^2t}u(0)\quad(\nu>0).
  \]
  Consequently the deformation gradients at a fixed zero of \(u(0)\) lie in
  a one-parameter matrix subgroup, so arbitrary targets at two distinct
  times are impossible within this ansatz.
\end{enumerate}
\end{proposition}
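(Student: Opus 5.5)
All three items come from explicit features of a single Fourier shell: the parity of lattice points on a sphere, and the fact that the Beltrami nonlinearity is a pure gradient. I will prove item 2 first and then derive item 1 from it.

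\emph{Item 2.} Every \(W\in E_\lambda^+\) with \(\lambda=\sqrt m\) is a finite sum of modes \(\hat W_k e^{ik\cdot x}\) with \(k\in\mathbb Z^3\) and \(\abs k^2=m\), since \(\nabla\times\) composed with itself equals \(-\Delta\) on divergence-free fields. For such \(k\),
\[
  e^{ik\cdot\pi\boldsymbol 1}=(-1)^{k_1+k_2+k_3}.
\]
Because \(k_i\equiv k_i^2\pmod 2\), we have \(k_1+k_2+k_3\equiv\abs k^2=m\pmod 2\). Hence every mode, and therefore \(W\), is multiplied by \((-1)^m\) under translation by \(\pi\boldsymbol 1\). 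Differentiating this identity gives the statement for \(\nabla W\). Since \(\pi\boldsymbol 1\ne0\) in \(\T^3_{2\pi}\), the pair \((x,x+\pi\boldsymbol 1)\) lies in \(\operatorname{Conf}_2(\T^3)\), and the jets at its two points are tied together.

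\emph{Item 1.} The lattice points with \(\abs k^2=2\) are the twelve vectors \(\pm e_i\pm e_j\) with \(i<j\), which form six pairs \(\pm k\). For each pair, the condition \(ik\times\hat W_k=\sqrt2\,\hat W_k\) together with \(k\cdot\hat W_k=0\) leaves a one-dimensional complex helical space. Reality ties \(\hat W_{-k}\) to \(\overline{\hat W_k}\), so each pair contributes real dimension \(2\), giving \(\dim E_{\sqrt2}^+=12\). Non-interpolability then follows from item 2 with \(m=2\). At the pair \((x,x+\pi\boldsymbol 1)\) every field satisfies \(\nabla W(x+\pi\boldsymbol 1)=\nabla W(x)\), so the targets \(W=0\) at both points with \(H_1\ne H_2\) are unattainable. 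A cruder alternative is the count \(16=2\cdot8>12\), but it says less about which zero-value targets fail; the resonance argument exhibits them explicitly.

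\emph{Item 3.} Suppose \(u(t)\in E_\lambda^+\) for all \(t\). The identity \(u\cdot\nabla u=\nabla\frac{\abs u^2}{2}-u\times(\nabla\times u)\) reduces the nonlinearity to \(\nabla\frac{\abs u^2}{2}\), and \(\Delta u=-\lambda^2u\). The equation becomes
\[
  \partial_tu+\nu\lambda^2u=-\nabla\Bigl(p+\frac{\abs u^2}{2}\Bigr).
\]
The left side is divergence-free with zero mean, and on the torus it is \(L^2\)-orthogonal to gradients, so both sides vanish. Integrating \(\partial_tu=-\nu\lambda^2u\) gives \(u(t)=e^{-\nu\lambda^2t}u(0)\), which is constant when \(\nu=0\).

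At a zero \(x_0\) of \(u(0)\), the point \(x_0\) is a stationary material point. Its deformation gradient is
\[
  F(t)=\exp\bigl(\theta(t)H\bigr),\qquad H=\nabla u(0)(x_0),\qquad \theta(t)=\int_0^te^{-\nu\lambda^2s}\,ds,
\]
where \(\theta\) is strictly increasing. So \(F(t)\) lies in a one-parameter subgroup, and the targets at different times are mutually determined.

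For a concrete failure, note that the curl equation at a zero of \(u(0)\) makes \(H\) symmetric. Then \(\exp(\theta(t_1)H)=\Id\) with \(\theta(t_1)>0\) forces \(H=0\), so the targets \(F(t_1)=\Id\) and \(F(t_2)\ne\Id\) are impossible.

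The only step needing care is item 3's elimination of the gradient term, which rests on the Helmholtz orthogonality just stated. The rest is bookkeeping on the lattice points with \(\abs k^2=m\).
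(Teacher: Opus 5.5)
Items 2 and 3 follow the paper's route. Item 2 uses the parity congruence $k_1+k_2+k_3\equiv\abs{k}^2\pmod 2$. Item 3 uses the Beltrami gradient identity with the Leray projection (your Helmholtz orthogonality), followed by the matrix exponential at a stationary zero. Your symmetric-$H$ counterexample is a valid concrete instance, and your count $\dim E_{\sqrt2}^+=12$ is correct.

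The gap is in the non-interpolation part of item 1. Your resonance argument shows failure only at the special pairs $(x,x+\pi\boldsymbol 1)$. It is therefore just item 2 with $m=2$, and it makes no use of the dimension that the statement puts up front. The content of item 1, and what the paper proves, is stronger: the zero-value targets $(0,H_1,0,H_2)$ cannot all be interpolated at \emph{any} pair of distinct points, because the shell is too small. Read as ``at some pair,'' your argument would suffice, but then the dimension $12$ would play no role.

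Your ``cruder alternative'' does not supply the stronger claim either. The inequality $16>12$ only shows that the full jet map $\mathcal J_{\sqrt2,\boldsymbol x}$ is never onto $(\R^3\oplus\Symz(3))^2$. The zero-value targets form a $10$-dimensional subspace, and $10\le 12$, so that count says nothing about whether they lie in the image.

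The missing step is that point evaluation on $E_{\sqrt2}^+$ has rank exactly $3$. By Lemma~\ref{lem:Beltrami}, the map $W\mapsto(W(0),\operatorname{sym}\nabla W(0))$ is already onto $\R^3\oplus\Symz(3)$ on the span of the fields $C_j,R_j$. Translation invariance transfers this to any point $x_1$. Hence $\{W\in E_{\sqrt2}^+:W(x_1)=0\}$ has dimension $12-3=9$. The linear map $W\mapsto(\operatorname{sym}\nabla W(x_1),\operatorname{sym}\nabla W(x_2))$ on this subspace cannot cover the $10$-dimensional space $\Symz(3)^2$, even before imposing $W(x_2)=0$. Adding this argument proves item 1 at every pair of points. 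You can keep the resonance computation as an explicit example of unattainable targets at resonant pairs.
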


\begin{proof}
The shell \(\{n\in\mathbb Z^3:\abs{n}^2=2\}\) has \(12\) lattice points, and
the corresponding real positive-helicity eigenspace has dimension \(12\).
By Lemma~\ref{lem:Beltrami} and translation invariance, evaluation at one
point has rank \(3\), so the subspace of fields vanishing there has
dimension \(9\).  It cannot map onto the \(10\)-dimensional space
\(\Symz(3)^2\), even before imposing vanishing at the second point.

For every \(n\in\mathbb Z^3\) with \(\abs{n}^2=m\),
\[
  n_1+n_2+n_3
  \equiv
  n_1^2+n_2^2+n_3^2
  \equiv m\pmod2.
\]
Every Fourier mode in the shell therefore acquires the same factor
\((-1)^m\) under translation by \(\pi\boldsymbol1\), and differentiation
preserves that factor.

Finally, for \(u(t)\in E_\lambda^+\),
\[
  (u\cdot\nabla)u=\nabla\frac{\abs{u}^2}{2},
  \qquad
  \Delta u=-\lambda^2u.
\]
After applying the Leray projector, the unforced equation is
\(\partial_tu=0\) for Euler and
\(\partial_tu=-\nu\lambda^2u\) for Navier--Stokes.  At a fixed zero, the
velocity gradient has the same scalar time factor, and the deformation
gradient is its matrix exponential integrated in time.
\end{proof}

\begin{corollary}[Simultaneous generic multipoint realization]
\label{cor:multipoint-realization}
Fix \(k\ge1\), \(T>0\), and \(\nu\ge0\).  For every sufficiently large odd
integer \(\lambda\), every
\[
  \boldsymbol x=(x_1,\ldots,x_k)
  \in\operatorname{Conf}_k(\T^3)\setminus\Sigma_{\lambda,k},
\]
and every collection
\[
  F_\ell^*\in\Sym^+(3)\cap\SL(3,\R)
  \quad(1\le\ell\le k),
\]
there is one exact real-analytic periodic unforced Euler solution if
\(\nu=0\), and one such Navier--Stokes solution if \(\nu>0\), for which
\[
  X(x_\ell,t)=x_\ell\quad(0\le t\le T),
  \qquad
  \nabla_aX(x_\ell,T)=F_\ell^*
  \quad(1\le\ell\le k).
\]
\end{corollary}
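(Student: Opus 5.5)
The plan is to imitate the proof of Theorem~\ref{thm:realization}, replacing the single-point interpolation of Lemma~\ref{lem:Beltrami} by the multipoint interpolation of Theorem~\ref{thm:multipoint-jet}. For each \(\ell\), set \(H_\ell=\log F_\ell^*\). Here \(\log\) is the real symmetric logarithm of a positive-definite matrix. Since \(\det F_\ell^*=1\), we get \(\tr H_\ell=\log\det F_\ell^*=0\), so \(H_\ell\in\Symz(3)\).

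Take \(\lambda\) odd and beyond the threshold of Theorem~\ref{thm:multipoint-jet}, and take \(\boldsymbol x\notin\Sigma_{\lambda,k}\). That theorem gives one \(W\in E_\lambda^+\) with \(W(x_\ell)=0\) and \(\nabla W(x_\ell)=H_\ell\) for all \(\ell\). Note that the full gradient, not just its symmetric part, equals \(H_\ell\): the curl equation at a zero forces the skew part to vanish, as is already recorded there.

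Next I define the solution exactly as before:
\[
  u(x,t)=c_{\nu,T}e^{-\nu\lambda^2t}W(x),\qquad
  p(x,t)=-\tfrac12c_{\nu,T}^2e^{-2\nu\lambda^2t}\abs{W(x)}^2,
\]
where \(c_{\nu,T}=\nu\lambda^2/(1-e^{-\nu\lambda^2T})\) for \(\nu>0\) and \(c_{\nu,T}=T^{-1}\) for \(\nu=0\). The identity \((W\cdot\nabla)W=\nabla\abs{W}^2/2\), valid for any Beltrami field, cancels the nonlinearity against the pressure. The identity \(\Delta W=-\lambda^2W\) (from \(\nabla\cdot W=0\) and curl-curl) cancels the viscous term against the time derivative. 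Hence \((u,p)\) is an exact real-analytic periodic unforced solution.

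Since \(u(x_\ell,t)=0\), uniqueness for the particle ODE gives \(X(x_\ell,t)=x_\ell\) on \([0,T]\). Along each fixed point, \(\nabla u(x_\ell,t)=c_{\nu,T}e^{-\nu\lambda^2t}H_\ell\). These matrices commute for different \(t\), because they are scalar multiples of one matrix. The deformation gradient is therefore
\[
  F(x_\ell,t)=\exp\Bigl(\Bigl(\int_0^t c_{\nu,T}e^{-\nu\lambda^2s}\,ds\Bigr)H_\ell\Bigr).
\]
By the choice of \(c_{\nu,T}\) the integral equals \(1\) at \(t=T\), so \(F(x_\ell,T)=e^{H_\ell}=F_\ell^*\). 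This holds simultaneously for every \(\ell\), since all points share the same scalar time profile.

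I expect no real obstacle. The only points needing care are that the scalar clock is common to all \(k\) points, which is exactly why one terminal time suffices (compare Proposition~\ref{prop:Beltrami-obstructions}(3)), and that \(\log\) maps \(\Sym^+(3)\cap\SL(3,\R)\) into \(\Symz(3)\). This second point explains why the statement is restricted to positive-definite targets: a general \(F\in\SL(3,\R)\) is not of the form \(e^H\) with \(H\) symmetric.
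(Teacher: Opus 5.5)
Your proposal is correct and follows the paper's proof essentially verbatim. You take \(H_\ell=\log F_\ell^*\in\Symz(3)\), use Theorem~\ref{thm:multipoint-jet} to obtain one \(W\in E_\lambda^+\) vanishing at every \(x_\ell\) with \(\nabla W(x_\ell)=H_\ell\), and run the common scalar clock \(c\,e^{-\nu\lambda^2t}\) normalized by \(\int_0^Tc\,e^{-\nu\lambda^2r}\,dr=1\), which yields \(e^{H_\ell}=F_\ell^*\) at all points simultaneously; your added checks (trace zero of the symmetric logarithm, commutation of the scalar multiples of \(H_\ell\)) make explicit steps the paper leaves implicit.
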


\begin{proof}
Set \(H_\ell=\log F_\ell^*\in\Symz(3)\), and use
Theorem~\ref{thm:multipoint-jet} to choose one
\(W\in E_\lambda^+\) with
\[
  W(x_\ell)=0,
  \qquad
  \nabla W(x_\ell)=H_\ell
  \quad(1\le\ell\le k).
\]
Put
\[
  c=
  \begin{cases}
    T^{-1},&\nu=0,\\[4pt]
    \displaystyle\frac{\nu\lambda^2}
    {1-e^{-\nu\lambda^2T}},&\nu>0,
  \end{cases}
\]
and define
\[
  u(x,t)=ce^{-\nu\lambda^2t}W(x),
  \qquad
  p(x,t)=-\frac12c^2e^{-2\nu\lambda^2t}\abs{W(x)}^2.
\]
This is an exact unforced solution.  Since every \(x_\ell\) is a zero of
\(W\), it is fixed by the flow.  Along this trajectory,
\[
  \nabla u(x_\ell,t)=ce^{-\nu\lambda^2t}H_\ell,
\]
and hence
\[
  \nabla_aX(x_\ell,t)
  =
  \exp\!\left(
    \int_0^tce^{-\nu\lambda^2r}\,dr\,H_\ell
  \right).
\]
The scalar integral equals \(1\) at \(t=T\), so the last display gives
\(\nabla_aX(x_\ell,T)=e^{H_\ell}=F_\ell^*\) simultaneously.
\end{proof}

\section{Quadratic sensing under volume-preserving congruence}

Let \(\Sym(n)\) be the real symmetric \(n\times n\) matrices, equipped with the
Frobenius inner product, and let
\[
  \Symz(n)=\{B\in\Sym(n):\tr B=0\}.
\]
Unless a subscript is displayed, matrix norms are operator norms; the
Frobenius norm is denoted by \(\norm{\cdot}_{\mathrm F}\).  Pointwise matrix
absolute values are Frobenius norms.
For a direction system
\(\mathcal D=\{d_1,\ldots,d_M\}\subset\sphere^{n-1}\), write
\[
  Q_j=d_j\otimes d_j
\]
and define
\[
  \mathcal A_{\mathcal D}B
  =
  \bigl(\ip{B}{Q_j}\bigr)_{j=1}^M
  =
  \bigl(d_j^\top Bd_j\bigr)_{j=1}^M.
\]
For \(F\in\operatorname{GL}(n)\), the normalized transported system is
\[
  F_\#\mathcal D
  =
  \left\{\tau_j(F):=\frac{Fd_j}{\abs{Fd_j}}\right\}_{j=1}^M.
\]

\begin{definition}
The direction system \(\mathcal D\) is \emph{robustly trace-free sensing} if
\[
  B\longmapsto
  \bigl(\tau_j(F)^\top B\tau_j(F)\bigr)_{j=1}^M
\]
is injective on \(\Symz(n)\) for every \(F\in\SL(n,\R)\).
\end{definition}

\begin{theorem}[Sharp channel classification]\label{thm:classification}
Let \(n\ge2\), \(M\ge1\), and
\(\mathcal D=\{d_1,\ldots,d_M\}\subset\sphere^{n-1}\).  The following are
equivalent:
\begin{enumerate}
  \item \(\mathcal D\) is robustly trace-free sensing;
  \item the rank-one projectors span the full symmetric space:
  \[
    \operatorname{span}\{d_j\otimes d_j:1\le j\le M\}=\Sym(n);
  \]
  \item \(\mathcal A_{\mathcal D}\) is injective on \(\Sym(n)\).
\end{enumerate}
Consequently every robust system has
\[
  M\ge N_n=\frac{n(n+1)}2,
\]
and this lower bound is attained.
\end{theorem}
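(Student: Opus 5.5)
The plan is to first dispose of the equivalence (2)\(\Leftrightarrow\)(3), which is pure linear algebra. By definition \(\mathcal A_{\mathcal D}B=(\ip{B}{Q_j})_j\), so \(\ker\mathcal A_{\mathcal D}\) is exactly the Frobenius-orthogonal complement of \(\operatorname{span}\{Q_j\}\) in \(\Sym(n)\). Hence injectivity on \(\Sym(n)\) holds precisely when the projectors span \(\Sym(n)\). The counting consequence is then immediate: spanning a space of dimension \(N_n=n(n+1)/2\) needs \(M\ge N_n\). For attainment I would exhibit the \(N_n\) directions \(e_i\) (\(1\le i\le n\)) and \((e_i+e_j)/\sqrt2\) (\(i<j\)). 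Their projectors are \(E_{ii}\) and \(\tfrac12(E_{ii}+E_{jj}+E_{ij}+E_{ji})\), which clearly span \(\Sym(n)\).

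The core is (1)\(\Leftrightarrow\)(2), and I would reduce it to a congruence reformulation. Since \(\tau_j(F)^\top B\tau_j(F)=\abs{Fd_j}^{-2}\,d_j^\top(F^\top BF)d_j\) and the positive scalings do not affect injectivity, the map in the definition is injective on \(\Symz(n)\) iff \(\mathcal A_{\mathcal D}\) is injective on the subspace \(F^\top\Symz(n)F=\{C\in\Sym(n):\tr(F^{-\top}CF^{-1})=0\}=\{C:\ip{C}{P^{-1}}=0\}\), where \(P=F^\top F\). As \(F\) ranges over \(\SL(n,\R)\), \(P^{-1}\) ranges over all of \(\Sym^+(n)\cap\SL(n,\R)\). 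Robust sensing therefore says: \(\ker\mathcal A_{\mathcal D}\) meets no hyperplane \(\{C:\ip{C}{R}=0\}\) with \(R\) positive definite, except in \(0\). Equivalently, every nonzero \(C\in\ker\mathcal A_{\mathcal D}\) satisfies \(\ip{C}{R}\ne0\) for all \(R\in\Sym^+(n)\); the determinant normalization is harmless by scaling.

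Given this reformulation, (2)\(\Rightarrow\)(1) is trivial, since the kernel is zero. For (1)\(\Rightarrow\)(2), suppose a nonzero \(C\in\Sym(n)\) satisfies \(d_j^\top Cd_j=0\) for all \(j\). We need some \(R\in\Sym^+(n)\) with \(\tr(CR)=0\). If \(C\) is indefinite or zero-trace-compatible, this follows by intermediate value; the key case is when \(C\) is semidefinite. But a semidefinite nonzero \(C\) cannot satisfy \(d_j^\top Cd_j=0\) for every \(j\) in a robust system without further thought, so I would argue directly. If \(C\) has eigenvalues of both signs, or a zero eigenvalue, choose \(R=\sum_i r_iu_iu_i^\top\) in an eigenbasis of \(C\) with positive weights \(r_i\) making \(\sum r_i\mu_i=0\). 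This is possible when the \(\mu_i\) are not all strictly of one sign; a zero eigenvalue with others of one sign needs a limiting tweak, handled by perturbing the weights since strict positivity only requires \(r_i>0\), and \(\sum r_i\mu_i=0\) with some \(\mu_i=0\) fails. So the delicate case is \(C\) semidefinite. Say \(C\succeq0\), \(C\ne0\). Then \(d_j^\top Cd_j=0\) forces \(Cd_j=0\) for all \(j\), so all \(d_j\) lie in the proper subspace \(\ker C\). Then I would show robustness fails outright. Pick a unit \(w\perp\ker C\) and take \(B=ww^\top-\tfrac1n\Id+\text{correction}\); more cleanly, take \(C'=ww^\top-\epsilon\sum_{\text{basis of }\ker C}\) \(u u^\top\)-type combinations. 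Simplest: any \(C'=ww^\top-s\,\Pi\) with \(\Pi\) the projector onto \(\ker C\) has \(d_j^\top C'd_j=-s\). Instead use \(C'=ww^\top\) plus \(G\) where \(G\) is a nonzero matrix supported on \(\ker C\) orthogonal to all \(d_jd_j^\top\) if available. Otherwise replace \(C\) by \(C''=ww^\top-\lambda\Pi_w^\perp\)... I expect this semidefinite case to be the main obstacle. The fix I would try first is to replace \(C\) by \(C+tK\) with \(K\in\ker\mathcal A_{\mathcal D}\) indefinite, or else, if the kernel contains only semidefinite matrices, to use the freedom that the kernel is a linear space. If \(C\succeq0\) is in it, the kernel also contains \(-C\preceq0\), yet both have the same sign issue. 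So I would instead exploit \(\Id\)-direction: when all \(d_j\in\ker C\), the matrix \(C-\alpha\Pi_{\ker C}\) is not in the kernel. Hence the honest route is to check whether this case is genuinely compatible with (1) and, if needed, establish robustness failure via an \(F\) that stretches \(\ker C^\perp\).
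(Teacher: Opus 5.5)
Most of your argument is sound and matches the paper. This includes the orthogonal-complement argument for (2)\(\Leftrightarrow\)(3), the congruence step \(K=F^\top BF\) for (2)\(\Rightarrow\)(1), and the count with the directions \(e_i,(e_i+e_j)/\sqrt2\). It also includes the indefinite case of the converse: choosing \(R\succ0\) with \(\tr(CR)=0\), normalizing \(\det R=1\), and taking \(F=R^{-1/2}\), \(B=F^{-\top}CF^{-1}\) is exactly the paper's construction with \(A=R\).

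The genuine gap is the semidefinite case, which you leave unfinished; you even question whether it is compatible with (1). Your proposed ``limiting tweak'' cannot work. If \(0\ne C\succeq0\), then \(\tr(CR)>0\) for every \(R\succ0\), so no deformation turns this particular \(C\) into a trace-free witness. The missing idea is to abandon \(C\) and exhibit a different element of the kernel. You already showed \(Cd_j=0\) for all \(j\), so every \(d_j\) lies in the proper subspace \(\ker C\). Take a unit \(z\in(\ker C)^\perp\), which exists because \(C\ne0\), and put \(w=d_1\), so that \(w\perp z\). Set \(B=w\otimes z+z\otimes w\). Then \(Bz=w\ne0\), \(\tr B=2\,w\cdot z=0\), and
\[
  d_j^\top Bd_j=2(d_j\cdot w)(d_j\cdot z)=0\qquad(1\le j\le M),
\]
so robust sensing already fails at \(F=\Id\). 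This closes (1)\(\Rightarrow\)(2).

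The paper avoids your dead end by splitting the converse according to whether \(\mathcal D\) spans \(\R^n\), rather than by the sign of a chosen kernel element. If \(\mathcal D\) does not span, it uses precisely this off-diagonal \(B\) at \(F=\Id\). If \(\mathcal D\) spans, your own observation \(K^{1/2}d_j=0\) shows that every nonzero kernel element is indefinite. Only in that case is the positive-weight construction of \(A\) needed.
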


\begin{proof}
The equivalence of the second and third statements is immediate.
Assume that the projectors span \(\Sym(n)\).  Let \(F\in\SL(n,\R)\) and
\(B\in\Symz(n)\) satisfy
\[
  \tau_j(F)^\top B\tau_j(F)=0
  \qquad\hbox{for every }j.
\]
Set \(K=F^\top BF\).  Then
\[
  d_j^\top Kd_j
  =
  \abs{Fd_j}^2\tau_j(F)^\top B\tau_j(F)
  =0.
\]
The spanning hypothesis gives \(K=0\), hence \(B=0\).  Thus the system is
robust.

Conversely, suppose that the projectors do not span \(\Sym(n)\).  If the
directions do not span \(\R^n\), let \(W=\operatorname{span}\mathcal D\).
Choose nonzero \(w\in W\) and \(z\in W^\perp\), and set
\[
  B=w\otimes z+z\otimes w.
\]
Then \(B\ne0\), \(\tr B=0\), and \(d_j^\top Bd_j=0\) for every \(j\).
Robust sensing already fails at \(F=\Id\).

It remains to treat the case in which the directions span \(\R^n\).  Choose
  \[
  0\ne K\in
  \left(
    \operatorname{span}\{d_j\otimes d_j:1\le j\le M\}
  \right)^\perp.
  \]
The matrix \(K\) is indefinite.  Indeed, if \(K\ge0\), then
\(d_j^\top Kd_j=0\) implies \(K^{1/2}d_j=0\) for all \(j\); since the
directions span, this forces \(K=0\).  The same argument excludes \(K\le0\).

Diagonalizing \(K\), choose positive weights on its positive and negative
eigenspaces so that a positive-definite matrix \(A_0\) satisfies
\(\tr(A_0K)=0\).  After multiplying \(A_0\) by a positive scalar, we may assume
\[
  A>0,\qquad \det A=1,\qquad \tr(AK)=0.
\]
Set \(F=A^{-1/2}\in\SL(n,\R)\) and
\[
  B=F^{-\top}KF^{-1}.
\]
Then \(B\ne0\) and
\[
  \tr B
  =
  \tr(F^{-1}F^{-\top}K)
  =
  \tr(AK)=0.
\]
Moreover,
\[
  \tau_j(F)^\top B\tau_j(F)
  =
  \frac{d_j^\top Kd_j}{\abs{Fd_j}^2}
  =0.
\]
Thus robust sensing fails.

The dimension bound follows because \(\dim\Sym(n)=N_n\).  It is attained by
the \(N_n\) directions
\[
  e_i,\qquad
  \frac{e_i+e_j}{\sqrt2}\quad(1\le i<j\le n),
\]
whose projectors recover all diagonal and off-diagonal entries of a symmetric
matrix.
\end{proof}

For quantitative estimates, define
\begin{equation}\label{eq:alphaD}
  \alpha_{\mathcal D}
  =
  \inf_{\substack{C\in\Sym(n)\\ \norm{C}_{\mathrm F}=1}}
  \sum_{j=1}^M(d_j^\top Cd_j)^2.
\end{equation}
By compactness of the unit sphere in \(\Sym(n)\),
\(\alpha_{\mathcal D}>0\) if and only if
\(\ker\mathcal A_{\mathcal D}=\{0\}\).  Thus the classification theorem says
that \(\alpha_{\mathcal D}>0\) precisely for robust systems.

\begin{lemma}[Quantitative congruence bound]\label{lem:congruence}
If \(\alpha_{\mathcal D}>0\), then for every \(F\in\operatorname{GL}(n)\) and
every \(B\in\Sym(n)\),
\begin{equation}\label{eq:congruence}
  \norm{B}_{\mathrm F}
  \le
  \alpha_{\mathcal D}^{-1/2}\kappa(F)^2
  \left(
    \sum_{j=1}^M
    \bigl(\tau_j(F)^\top B\tau_j(F)\bigr)^2
  \right)^{1/2},
\end{equation}
where \(\kappa(F)=\norm{F}\norm{F^{-1}}\) is the Euclidean condition number.
\end{lemma}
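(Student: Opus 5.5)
The plan is to reuse the congruence trick from the proof of Theorem~\ref{thm:classification}: pull \(B\) back to \(K=F^\top BF\), apply the definition \eqref{eq:alphaD} of \(\alpha_{\mathcal D}\) to \(K\), and then compare the norms of \(K\) and \(B\) and the weights \(\abs{Fd_j}^2\) with \(\kappa(F)\).

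\emph{Step 1.} Fix \(F\in\operatorname{GL}(n)\) and \(B\in\Sym(n)\), and set \(K=F^\top BF\in\Sym(n)\). As in the classification proof,
\[
  d_j^\top Kd_j=\abs{Fd_j}^2\,\tau_j(F)^\top B\tau_j(F).
\]
Since \(\abs{d_j}=1\), we have \(\abs{Fd_j}\le\norm F\). Hence
\[
  \sum_j(d_j^\top Kd_j)^2\le\norm F^4\sum_j\bigl(\tau_j(F)^\top B\tau_j(F)\bigr)^2 .
\]

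\emph{Step 2.} By homogeneity, \eqref{eq:alphaD} gives
\[
  \alpha_{\mathcal D}\norm K_{\mathrm F}^2\le\sum_j(d_j^\top Kd_j)^2 .
\]

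\emph{Step 3.} Recover \(B\) from \(B=F^{-\top}KF^{-1}\). The Frobenius norm satisfies \(\norm{XYZ}_{\mathrm F}\le\norm X\norm Y_{\mathrm F}\norm Z\), and \(\norm{F^{-\top}}=\norm{F^{-1}}\). Therefore
\[
  \norm B_{\mathrm F}\le\norm{F^{-1}}^2\norm K_{\mathrm F}.
\]

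\emph{Step 4.} Chain the three bounds:
\[
  \norm B_{\mathrm F}\le\norm{F^{-1}}^2\alpha_{\mathcal D}^{-1/2}\norm F^2\Bigl(\sum_j\bigl(\tau_j(F)^\top B\tau_j(F)\bigr)^2\Bigr)^{1/2},
\]
and \(\norm{F^{-1}}^2\norm F^2=\kappa(F)^2\), which is \eqref{eq:congruence}.

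There is no real obstacle. The only care needed is the mixed operator/Frobenius submultiplicativity in Step 3, which follows from the singular value characterization or from \(\norm{XY}_{\mathrm F}\le\norm X\norm Y_{\mathrm F}\) applied column by column. One should also note that the hypothesis \(\alpha_{\mathcal D}>0\) is used only to make \(\alpha_{\mathcal D}^{-1/2}\) finite.
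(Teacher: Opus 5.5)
Your proposal is correct and follows essentially the same route as the paper. Both pull back to \(C=F^\top BF\), apply the definition of \(\alpha_{\mathcal D}\) to \(C\), bound the weights \(\abs{Fd_j}^2\le\norm F^2\), and recover \(\norm B_{\mathrm F}\le\norm{F^{-1}}^2\norm C_{\mathrm F}\) from \(B=F^{-\top}CF^{-1}\).
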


\begin{proof}
Set \(C=F^\top BF\).  By the definition of \(\alpha_{\mathcal D}\),
\[
  \norm{C}_{\mathrm F}
  \le
  \alpha_{\mathcal D}^{-1/2}
  \left(\sum_{j=1}^M(d_j^\top Cd_j)^2\right)^{1/2}.
\]
Since
\[
  d_j^\top Cd_j
  =
  \abs{Fd_j}^2\tau_j(F)^\top B\tau_j(F),
\]
the right-hand side is at most
\[
  \alpha_{\mathcal D}^{-1/2}\norm{F}^2
  \left(
    \sum_{j=1}^M
    \bigl(\tau_j(F)^\top B\tau_j(F)\bigr)^2
  \right)^{1/2}.
\]
Finally \(B=F^{-\top}CF^{-1}\), so
\[
  \norm{B}_{\mathrm F}
  \le \norm{F^{-1}}^2\norm{C}_{\mathrm F}.
\]
Combining the last two estimates proves \eqref{eq:congruence}.
\end{proof}

\section{The outer-product-optimal six-line system in dimension three}

Let \(\mathcal D=\{d_1,\ldots,d_6\}\subset\sphere^2\).  We optimize the lower
bound \(\alpha_{\mathcal D}\) from \eqref{eq:alphaD} over all six-direction
systems.  The value and equality geometry below are the real
three-dimensional, six-vector specialization of the optimal outer-product
Riesz bounds in \cite{CasazzaPinkhamTuomanen2016}; we include the short proof
to fix normalization and the quantitative constant used later.

\begin{theorem}[Outer-product frame optimum]\label{thm:optimal-six}
For every six unit directions in \(\R^3\),
\begin{equation}\label{eq:optimal-bound}
  \alpha_{\mathcal D}\le\frac45.
\end{equation}
Equality holds if and only if
\begin{equation}\label{eq:ETF}
  \sum_{j=1}^6d_j\otimes d_j=2\Id,
  \qquad
  \abs{d_i\cdot d_j}^2=\frac15
  \quad(i\ne j).
\end{equation}
In the equality case, for
\[
  B=B_0+\frac{\tr B}{3}\Id,
  \qquad B_0\in\Symz(3),
\]
one has the exact identity
\begin{equation}\label{eq:optimal-identity}
  \sum_{j=1}^6(d_j^\top Bd_j)^2
  =
  \frac45\norm{B_0}_{\mathrm F}^2
  +
  \frac23(\tr B)^2.
\end{equation}
\end{theorem}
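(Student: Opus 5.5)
We work with the Gram operator of the measurement map, \(\mathcal A_{\mathcal D}^*\mathcal A_{\mathcal D}:\Sym(3)\to\Sym(3)\), that is, \(\mathcal S B=\sum_j\ip{B}{Q_j}Q_j\). By definition \(\alpha_{\mathcal D}=\lambda_{\min}(\mathcal S)\), the smallest eigenvalue of this positive semidefinite operator on the six-dimensional space \(\Sym(3)\). The upper bound will come from a trace count on the five-dimensional subspace \(\Symz(3)\), followed by analysis of the equality case.

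\textbf{Upper bound.} Let \(P_0\) be the orthogonal projection onto \(\Symz(3)\). The compression \(P_0\mathcal SP_0\) has trace equal to the sum of its five eigenvalues, so
\[
  5\alpha_{\mathcal D}\le\tr(P_0\mathcal SP_0)=\sum_{j=1}^6\norm{P_0Q_j}_{\mathrm F}^2.
\]
For a unit vector \(d_j\), \(P_0Q_j=Q_j-\tfrac13\Id\), hence \(\norm{P_0Q_j}_{\mathrm F}^2=1-\tfrac13=\tfrac23\). The right-hand side is therefore \(4\), which gives \(\alpha_{\mathcal D}\le\tfrac45\). If equality holds, then \(P_0\mathcal SP_0=\tfrac45 P_0\) on \(\Symz(3)\). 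The Rayleigh quotient of \(\mathcal S\) on the full space is at least \(\tfrac45\), while \(\Symz(3)\) is spanned by minimizing vectors. A minimizing vector of a Rayleigh quotient is an eigenvector, so \(\Symz(3)\) is \(\mathcal S\)-invariant with eigenvalue \(\tfrac45\). By symmetry of \(\mathcal S\), \(\Id\) is then also an eigenvector.

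\textbf{Equality \(\Rightarrow\) \eqref{eq:ETF}.} Since \(\Id\) is an eigenvector, \(\mathcal S\Id=\sum_jQ_j=\mu\Id\). Taking traces gives \(6=3\mu\), so \(\mu=2\) and the frame is tight. Next, evaluate \(\ip{\mathcal S(P_0Q_i)}{P_0Q_i}\) in two ways. From the eigenvalue it equals \(\tfrac45\cdot\tfrac23=\tfrac8{15}\). Directly it equals \(\sum_j\ip{P_0Q_i}{Q_j}^2=\sum_j\bigl((d_i\cdot d_j)^2-\tfrac13\bigr)^2\); the term \(j=i\) contributes \(\tfrac49\). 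Let \(c_j=(d_i\cdot d_j)^2\) for \(j\ne i\). Then \(\sum_{j\ne i}(c_j-\tfrac13)^2=\tfrac8{15}-\tfrac49=\tfrac4{45}\). Tightness gives \(\sum_j(d_i\cdot d_j)^2=d_i^\top(2\Id)d_i=2\), so \(\sum_{j\ne i}c_j=1\). Among five numbers summing to \(1\), the sum \(\sum_{j\ne i}(c_j-\tfrac13)^2\) is minimized exactly when all equal \(\tfrac15\), with minimum value \(5(\tfrac15-\tfrac13)^2=\tfrac4{45}\). Hence \(c_j=\tfrac15\) for all \(j\ne i\). This Cauchy--Schwarz equality step is the crux; the only delicacy is making the eigenvector argument above rigorous, which is standard.

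\textbf{\eqref{eq:ETF} \(\Rightarrow\) identity \eqref{eq:optimal-identity}, and hence equality.} The tightness condition gives \(\mathcal S\Id=2\Id\). For \(B_0\in\Symz(3)\), expand \(\sum_j(d_j^\top B_0d_j)^2=\sum_j\tr(B_0Q_j)^2\) as a quadratic form, namely \(\tr\bigl(B_0\otimes B_0\,\sum_jQ_j\otimes Q_j\bigr)\). To evaluate it, we would show the six lines form a projective \(2\)-design: the tight equiangular frame condition gives the fourth-moment identity \(\sum_j(d_j\cdot x)^4=\tfrac65\abs x^4\). We would check this via the frame potential: \(\sum_{i,j}(d_i\cdot d_j)^4=6+30\cdot\tfrac1{25}=\tfrac{36}5\), which matches the \(2\)-design bound \(\tfrac{M^2\cdot 3}{n(n+2)}=\tfrac{108}{15}=\tfrac{36}5\). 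Equality in that bound characterizes \(2\)-designs.

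A \(2\)-design averages \(Q\otimes Q\) to the isotropic tensor, so \(\sum_j\tr(BQ_j)^2=\tfrac{6}{15}\bigl((\tr B)^2+2\norm{B}_{\mathrm F}^2\bigr)\). Writing \(\norm{B}_{\mathrm F}^2=\norm{B_0}_{\mathrm F}^2+\tfrac13(\tr B)^2\), this becomes \(\tfrac45\norm{B_0}_{\mathrm F}^2+\tfrac25(1+\tfrac23)(\tr B)^2=\tfrac45\norm{B_0}_{\mathrm F}^2+\tfrac23(\tr B)^2\), which is \eqref{eq:optimal-identity}. Since \(\tfrac23\ge\tfrac45\cdot\tfrac13\), the minimum over \(\norm{B}_{\mathrm F}=1\) is \(\tfrac45\), attained on \(\Symz(3)\).

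We expect the main obstacle to be this last step, invoking or proving the \(2\)-design characterization. As a fallback, one can verify the identity directly on the icosahedral configuration and then use the fact that any equiangular tight configuration of six lines in \(\R^3\) is orthogonally equivalent to it, which holds by uniqueness of the Gram matrix up to switching.
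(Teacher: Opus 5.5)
Your proof is correct. The upper bound is the same trace count on $\Symz(3)$ that the paper uses, but your equality analysis and converse take a different route.

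\textbf{Tightness.} The paper uses a $2\times2$ block argument: a nonzero off-diagonal component $b$ between $\Id/\sqrt3$ and $\Symz(3)$ would produce the compression $\begin{pmatrix}2&|b|\\|b|&4/5\end{pmatrix}$, which has an eigenvalue below $4/5$. Your observation is that every nonzero element of $\Symz(3)$ then minimizes the Rayleigh quotient and is therefore an eigenvector. This is the same mechanism stated more cleanly, and it gives $\mu=2$ from the trace without computing the diagonal entry first.

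\textbf{Equiangularity.} The paper uses the synthesis map $Te_j=P_j$. From $TT^*=\frac45\Id$ and $\ker T=\operatorname{span}\mathbf 1$ it reads off the whole Gram matrix $T^*T=\frac45(I_6-\frac16\mathbf 1\mathbf 1^\top)$ at once. Your row-by-row variance argument fixes the row sum by tightness and the second moment by the eigenvalue. In effect it reads the diagonal of $(T^*T)^2=\frac45T^*T$ together with $T\mathbf 1=0$, so it reaches the same conclusion by a more hands-on path.

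\textbf{Converse.} This is where the routes differ most. You pass through the projective $2$-design characterization (equality in the $t=2$ frame-potential bound) and the isotropic fourth-moment formula. That is valid and buys extra structure: it shows equiangularity alone already forces tightness and the design property. The cost is importing an external theorem.

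The paper avoids it entirely. Under \eqref{eq:ETF}, $\ip{P_i}{P_j}$ equals $\frac23$ or $-\frac2{15}$, so the Gram matrix is $\frac45$ times a rank-five projection. Hence $TT^*=\frac45\Id$ on the five-dimensional space $\Symz(3)$. Then $\sum_jP_j=0$ kills the cross term when you expand $d_j^\top Bd_j=\ip{B_0}{P_j}+\frac13\tr B$.

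So the step you flagged as the main obstacle is a two-line Gram-matrix computation. Neither the design theory nor your fallback via uniqueness of six equiangular lines is needed.
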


\begin{proof}
Let
\[
  Q_j=d_j\otimes d_j,
  \qquad
  P_j=Q_j-\frac13\Id\in\Symz(3).
\]
For every \(j\),
\[
  \norm{P_j}_{\mathrm F}^2=\frac23.
\]
Consider the frame operator on the five-dimensional space \(\Symz(3)\),
\[
  \mathcal S_0B=\sum_{j=1}^6\ip{B}{P_j}P_j.
\]
Its trace is
\[
  \tr_{\Symz(3)}\mathcal S_0
  =
  \sum_{j=1}^6\norm{P_j}_{\mathrm F}^2
  =4.
\]
Hence, testing the Rayleigh quotient of the full sensing frame operator only
on \(\Symz(3)\),
\[
  \alpha_{\mathcal D}
  \le
  \lambda_{\min}(\mathcal S_0)
  \le
  \frac15\tr_{\Symz(3)}\mathcal S_0
  =
  \frac45,
\]
which proves \eqref{eq:optimal-bound}.

Suppose that equality holds.  Then the compression of the full sensing frame
operator to \(\Symz(3)\) has smallest eigenvalue at least \(4/5\).  The trace
identity forces
\[
  \mathcal S_0=\frac45\Id_{\Symz(3)}.
\]
Relative to
\[
  \Sym(3)=\operatorname{span}\{\Id/\sqrt3\}\oplus\Symz(3),
\]
the trace-direction diagonal entry of the full frame operator equals \(2\).
If its off-diagonal block had a nonzero component \(b\), the restriction to
the corresponding two-dimensional subspace would have matrix
\[
  \begin{pmatrix}
    2&\abs b\\
    \abs b&4/5
  \end{pmatrix},
\]
whose smaller eigenvalue is strictly below \(4/5\).
Therefore the off-diagonal block vanishes, which is equivalent to
\[
  \sum_{j=1}^6P_j=0
  \quad\Longleftrightarrow\quad
  \sum_{j=1}^6Q_j=2\Id.
\]

Let \(T:\R^6\to\Symz(3)\) be the synthesis map \(Te_j=P_j\).  We have
\[
  TT^*=\frac45\Id_{\Symz(3)}
\]
and \(\ker T=\operatorname{span}\{(1,\ldots,1)\}\).  Consequently
\[
  T^*T
  =
  \frac45\left(I_6-\frac16\mathbf 1\mathbf 1^\top\right).
\]
For \(i\ne j\),
\[
  \ip{P_i}{P_j}=-\frac{2}{15}.
\]
But
\[
  \ip{P_i}{P_j}
  =
  \abs{d_i\cdot d_j}^2-\frac13,
\]
which gives \(\abs{d_i\cdot d_j}^2=1/5\).

Conversely, \eqref{eq:ETF} gives
\[
  \ip{P_i}{P_j}
  =
  \begin{cases}
    2/3,&i=j,\\
    -2/15,&i\ne j.
  \end{cases}
\]
Thus \(P_1,\ldots,P_6\) form a regular simplex and
\(\mathcal S_0=(4/5)\Id\).  The tightness identity
\(\sum_jP_j=0\) eliminates the mixed trace--trace-free term, so expanding
\(B=B_0+(\tr B)\Id/3\) proves \eqref{eq:optimal-identity}.  Since
\[
  \norm{B}_{\mathrm F}^2
  =
  \norm{B_0}_{\mathrm F}^2+\frac13(\tr B)^2,
\]
we have
\[
  \sum_{j=1}^6(d_j^\top Bd_j)^2
  -
  \frac45\norm{B}_{\mathrm F}^2
  =
  \frac25(\tr B)^2
  \ge0.
\]
Taking a unit trace-free \(B\) gives equality; hence
\(\alpha_{\mathcal D}=4/5\).
\end{proof}

\begin{corollary}[Icosahedral realization]\label{cor:icosahedron}
Let \(\varphi=(1+\sqrt5)/2\) and normalize the six vectors
\[
  (0,1,\varphi),\ (0,1,-\varphi),\
  (1,\varphi,0),\ (1,-\varphi,0),\
  (\varphi,0,1),\ (\varphi,0,-1).
\]
The resulting six unoriented directions satisfy \eqref{eq:ETF} and attain
\(\alpha_{\mathcal D}=4/5\).
\end{corollary}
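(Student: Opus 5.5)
The plan is to verify the two conditions in \eqref{eq:ETF} directly and then invoke the equality case of Theorem~\ref{thm:optimal-six}, which already supplies \(\alpha_{\mathcal D}=4/5\) together with the identity \eqref{eq:optimal-identity}. All six listed vectors have the same squared length \(1+\varphi^2\), so each is normalized by the same factor \((1+\varphi^2)^{-1/2}\). Replacing any vector by its negative changes neither the unoriented line nor \(d_j\otimes d_j\). It therefore suffices to work with the raw vectors \(w_j\) and divide by \(1+\varphi^2\) at the end.

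For tightness, I would compute the raw sum \(\sum_j w_j\otimes w_j\). The vectors come in pairs differing by the sign of one coordinate: \((0,1,\pm\varphi)\), \((1,\pm\varphi,0)\), \((\varphi,0,\pm1)\). Within each pair, the off-diagonal term involving the sign-flipped coordinate cancels. The remaining off-diagonal entries already vanish because each vector has a zero coordinate, so every off-diagonal entry of the sum cancels. On the diagonal, each coordinate receives \(0\), \(2\cdot1\), and \(2\varphi^2\) from the three pairs, in cyclic order. Hence the sum is \(2(1+\varphi^2)\Id\), and after normalization \(\sum_j d_j\otimes d_j=2\Id\).

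For equiangularity, I would compute the inner products. Within a pair, \(w\cdot w'=1-\varphi^2=-\varphi\), using \(\varphi^2=\varphi+1\). Across pairs, every product of a vector of the form \((0,1,\pm\varphi)\) with one of the form \((1,\pm\varphi,0)\) equals \(\pm\varphi\), and the same holds for the other cross combinations by the cyclic symmetry. So every \(\abs{w_i\cdot w_j}=\varphi\), and
\[
  \abs{d_i\cdot d_j}^2=\frac{\varphi^2}{(1+\varphi^2)^2}.
\]
It remains to check \((1+\varphi^2)^2=5\varphi^2\). Since \(1+\varphi^2=\varphi+2\), this reduces to \((\varphi+2)^2=5\varphi+5\), that is, \(\varphi^2+4\varphi+4=5\varphi+5\), which is \(\varphi^2=\varphi+1\). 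This gives \(\abs{d_i\cdot d_j}^2=1/5\).

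With \eqref{eq:ETF} established, the converse direction of Theorem~\ref{thm:optimal-six} gives \(\alpha_{\mathcal D}=4/5\). The only potential obstacle is bookkeeping: checking all \(15\) pairs. Using the cyclic coordinate symmetry \((x,y,z)\mapsto(y,z,x)\), which permutes the three pairs, reduces this to the within-pair case and a single representative cross-pair family, which has four sign choices.
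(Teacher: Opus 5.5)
Your proposal is correct and follows the paper's route: compute the tight-frame identity and the equiangularity \(\abs{d_i\cdot d_j}^2=1/5\) directly using \(\varphi^2=\varphi+1\), then invoke the equality case of Theorem~\ref{thm:optimal-six} to get \(\alpha_{\mathcal D}=4/5\). One harmless slip: for the pair \((\varphi,0,\pm1)\) the inner product is \(\varphi^2-1=+\varphi\), not \(1-\varphi^2\), but only the absolute value enters.
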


\begin{proof}
The squared norm of every displayed vector is \(1+\varphi^2\).  Using
\(\varphi^2=\varphi+1\), direct calculation gives
\[
  \sum_{j=1}^6d_j\otimes d_j=2\Id
  \quad\hbox{and}\quad
  \abs{d_i\cdot d_j}^2=\frac15
  \quad(i\ne j).
\]
\end{proof}

\subsection{Dynamical sharpness of the channel count}

\begin{corollary}[Dynamical sharpness of the channel count]
\label{cor:dynamic-sharpness}
Every direction system in \(\R^3\) with at most five channels loses
trace-free sensing at a deformation geometry attained by an exact smooth
periodic Euler flow and, for every \(\nu>0\), by an exact smooth periodic
Navier--Stokes flow at a material point.
\end{corollary}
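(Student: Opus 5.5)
With at most five channels, \(\dim\operatorname{span}\{d_j\otimes d_j\}\le5<6=\dim\Sym(3)\). The plan is to let the converse half of Theorem~\ref{thm:classification} produce an explicit failing deformation, and then realize that deformation dynamically using Theorem~\ref{thm:realization}. Since \(M\le5<N_3\), condition (2) of Theorem~\ref{thm:classification} fails, so the system is not robustly trace-free sensing.

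We first extract the failing geometry explicitly from the proof of Theorem~\ref{thm:classification}, and we check in each case that it is symmetric positive definite.

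\emph{Case 1: the directions do not span \(\R^3\).} The trace-free matrix \(B=w\otimes z+z\otimes w\) is annihilated already at \(F=\Id\). This is trivially SPD with determinant one.

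\emph{Case 2: the directions span \(\R^3\).} The construction there gives an indefinite \(K\perp d_j\otimes d_j\) and a matrix \(A>0\) with \(\det A=1\) and \(\tr(AK)=0\). Setting \(F=A^{-1/2}\) produces a nonzero \(B\in\Symz(3)\) with \(\tau_j(F)^\top B\tau_j(F)=0\) for every \(j\). Because \(A\) is SPD, \(F=A^{-1/2}\in\Sym^+(3)\cap\SL(3,\R)\).

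So in both cases the failing deformation is an SPD matrix of determinant one. This is the key observation, because such matrices are exactly the targets of the closed-form construction.

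Next, fix any \(T>0\) and apply Theorem~\ref{thm:realization} with \(F_*=F\). For \(\nu=0\) it gives an exact real-analytic (in particular smooth) periodic unforced Euler solution, and for each \(\nu>0\) a Navier--Stokes one. In both, the origin is a fixed material point and \(\nabla_aX(0,T)=F\). The transported directions \(F d_j/|Fd_j|\) at that material point are therefore exactly the \(\tau_j(F)\). By construction of \(B\), the sensing map \(B\mapsto(\tau_j^\top B\tau_j)_j\) has the nonzero trace-free kernel element \(B\) at this dynamically attained deformation.

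Alternatively, one could invoke Theorem~\ref{thm:global-SL-realization} directly, since it attains every element of \(\SL(3,\R)\). That route, however, gives a moving particle rather than a fixed one, so the explicit low-frequency Theorem~\ref{thm:realization} is preferable.

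The only genuine point to check is that the witness from the classification proof lies in \(\Sym^+(3)\cap\SL(3,\R)\) rather than being a general element of \(\SL(3,\R)\); both cases above confirm this. The rest is a direct combination of the two cited theorems.
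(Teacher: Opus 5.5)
Your proposal is correct and follows essentially the same route as the paper, which also takes the witness \(F_*=A^{-1/2}\in\Sym^+(3)\cap\SL(3,\R)\) from the converse half of Theorem~\ref{thm:classification} and realizes it with Theorem~\ref{thm:realization}. The only cosmetic difference is the case split: the paper separates ``fails already at \(F=\Id\)'' (handled by the zero solution) from ``otherwise,'' whereas you split on whether the directions span \(\R^3\) and feed \(F_*=\Id\) into Theorem~\ref{thm:realization}, which yields the same trivial solution.
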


\begin{proof}
If the system already fails at \(F=\Id\), use the zero solution.  Otherwise,
the proof of Theorem~\ref{thm:classification} supplies
\(F_*=A^{-1/2}\in\Sym^+(3)\cap\SL(3,\R)\) at which trace-free sensing fails.
Apply Theorem~\ref{thm:realization}.
\end{proof}

\begin{corollary}[Simultaneous dynamic sharpness]
Fix \(T>0\) and finitely many direction systems
\(\mathcal D_1,\ldots,\mathcal D_k\subset\sphere^2\), each with at most five
channels.  For every sufficiently large odd \(\lambda\) and every
\[
  \boldsymbol x=(x_1,\ldots,x_k)
  \in\operatorname{Conf}_k(\T^3)\setminus\Sigma_{\lambda,k},
\]
one exact periodic unforced Euler
solution, and for every \(\nu>0\) one exact periodic unforced Navier--Stokes
solution, can make \(\mathcal D_\ell\) lose trace-free sensing at \(x_\ell\)
at one common prescribed terminal time, simultaneously for
\(1\le\ell\le k\).
\end{corollary}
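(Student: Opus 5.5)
The proof combines the single-system obstruction from Theorem~\ref{thm:classification} with the simultaneous realization of Corollary~\ref{cor:multipoint-realization}. For each \(\ell\) I will produce a target \(F_\ell^*\in\Sym^+(3)\cap\SL(3,\R)\) at which \(\mathcal D_\ell\) fails to be trace-free sensing. I will then realize all these targets at once, at the fixed points \(x_\ell\), by one exact solution.

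\emph{Step 1: choosing the targets.} Fix \(\ell\). Since \(\mathcal D_\ell\) has at most five channels and \(\dim\Sym(3)=6\), its projectors cannot span \(\Sym(3)\). By Theorem~\ref{thm:classification}, \(\mathcal D_\ell\) is therefore not robustly trace-free sensing. The converse half of that proof gives an explicit witness, which I split into two cases.
\begin{enumerate}
  \item If \(\mathcal D_\ell\) does not span \(\R^3\), sensing already fails at \(F=\Id\). I take \(F_\ell^*=\Id\).
  \item Otherwise, the proof constructs \(A_\ell>0\) with \(\det A_\ell=1\), and sensing fails at \(F=A_\ell^{-1/2}\). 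I take \(F_\ell^*=A_\ell^{-1/2}\), which lies in \(\Sym^+(3)\cap\SL(3,\R)\).
\end{enumerate}
In both cases the target is symmetric positive definite with determinant one. It is independent of \(\lambda\), \(T\), \(\nu\) and the points \(x_\ell\).

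\emph{Step 2: simultaneous realization.} I apply Corollary~\ref{cor:multipoint-realization} with these \(k\) targets, the common time \(T\), and the given \(\nu\ge0\). For every sufficiently large odd \(\lambda\) and every \(\boldsymbol x\notin\Sigma_{\lambda,k}\), it gives one exact unforced Euler or Navier--Stokes solution with
\[
  X(x_\ell,t)=x_\ell\quad(0\le t\le T),
  \qquad
  \nabla_aX(x_\ell,T)=F_\ell^*
  \quad(1\le\ell\le k).
\]
The threshold on \(\lambda\) is the one from Theorem~\ref{thm:multipoint-jet}, which depends only on \(k\) and not on the targets. So the quantifier order in the statement is respected.

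\emph{Step 3: loss of sensing.} At \(x_\ell\) and time \(T\), the transported directions of \(\mathcal D_\ell\) are \(\tau_j(F_\ell^*)\). The matrix \(B_\ell\) built in the proof of Theorem~\ref{thm:classification} satisfies
\[
  0\ne B_\ell\in\Symz(3),
  \qquad
  \tau_j(F_\ell^*)^\top B_\ell\,\tau_j(F_\ell^*)=0\quad\text{for all } j.
\]
Hence the measurement map of \(\mathcal D_\ell\) is not injective on \(\Symz(3)\) at \(x_\ell\). This happens simultaneously for all \(\ell\) at the common time \(T\).

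There is no substantial obstacle, since Corollary~\ref{cor:multipoint-realization} does all the PDE work. The only point to check is that the witness from the classification proof really lies in \(\Sym^+(3)\cap\SL(3,\R)\) in both cases. This matters because the multipoint construction produces only fixed-point deformations of the form \(e^{H}\) with \(H\in\Symz(3)\).
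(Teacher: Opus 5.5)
Your proposal is correct and matches the paper's proof. You take \(F_\ell^*=\Id\) or \(F_\ell^*=A_\ell^{-1/2}\in\Sym^+(3)\cap\SL(3,\R)\) from the converse half of the proof of Theorem~\ref{thm:classification}, then invoke Corollary~\ref{cor:multipoint-realization} at the common time \(T\); your non-spanning/spanning case split differs only cosmetically from the paper's "fails at \(\Id\)/otherwise" split, and your observation that the \(\lambda\)-threshold does not depend on the targets correctly justifies the quantifier order.
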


\begin{proof}
For each \(\mathcal D_\ell\), take \(F_\ell^*=\Id\) if sensing already fails
there.  Otherwise, the construction in the proof of
Theorem~\ref{thm:classification} gives
\(F_\ell^*=A_\ell^{-1/2}\in\Sym^+(3)\cap\SL(3,\R)\) at which sensing fails.
Apply
Corollary~\ref{cor:multipoint-realization}.
\end{proof}

\section{Continuation consequences and the viscous frontier}

\subsection{Six-channel Navier--Stokes strain characterization}

Consider the incompressible Navier--Stokes equations on \(\T^3\),
\begin{equation}\label{eq:NS}
  \partial_tu+u\cdot\nabla u+\nabla p=\nu\Delta u,
  \qquad \nabla\cdot u=0,
\end{equation}
with \(\nu>0\).  Let \(0<T<\infty\), \(s>5/2\), and let
\[
  u\in
  C([0,T);H^s_\sigma(\T^3))
  \cap
  L^2_{\mathrm{loc}}([0,T);H^{s+1}_\sigma(\T^3))
\]
be a strong solution.  Set
\[
  S(u)=\frac12(\nabla u+\nabla u^\top),
  \qquad
  \omega=\nabla\times u,
  \qquad
  \omega_0=\nabla\times u(\cdot,0),
\]
and let \(X\) be the Lagrangian flow, with \(F=\nabla_aX\).  For a robust
direction system
\(\mathcal D=\{d_1,\ldots,d_M\}\), define
\begin{align}
  q_j(a,t)
  &=
  \frac{d}{dt}\log\abs{F(a,t)d_j},\\
  g_{\mathcal D}(t)
  &=
  \esssup_{a\in\T^3}
  \left(\sum_{j=1}^Mq_j(a,t)^2\right)^{1/2},\\
  \mathfrak A_{\mathcal D}(t)
  &=
  \int_0^tg_{\mathcal D}(r)\,dr
  \qquad(0\le t<T),
\end{align}
and set
\[
  \mathfrak A_{\mathcal D}(T)
  =
  \lim_{t\uparrow T}\mathfrak A_{\mathcal D}(t)
  \in[0,\infty].
\]

\begin{theorem}[Finite-channel strain characterization]\label{thm:NS-observability}
Let \(\mathcal D\subset\sphere^2\) satisfy the equivalent conditions in
Theorem~\ref{thm:classification}.  Then
\begin{equation}\label{eq:action-equivalence}
  \mathfrak A_{\mathcal D}(T)<\infty
  \quad\Longleftrightarrow\quad
  \int_0^T\norm{S(u)(t)}_{L^\infty(\T^3)}\,dt<\infty.
\end{equation}
Consequently \(u\) extends in \(H^s\) beyond \(T\) if and only if
\(\mathfrak A_{\mathcal D}(T)<\infty\).

For the icosahedral six-direction system,
\begin{equation}\label{eq:ico-quantitative}
  \norm{S(u)(t)}_{L^\infty}
  \le
  27\sqrt{\frac54}\,
  e^{6\mathfrak A_{\mathcal D}(t)}g_{\mathcal D}(t).
\end{equation}
\end{theorem}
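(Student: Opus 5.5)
The plan is to reduce everything to the pointwise identity \eqref{eq:stretch} and the congruence bound of Lemma~\ref{lem:congruence}. Fix \(t<T\). Since \(s>5/2\), \(u(t)\) is Lipschitz, so the flow \(X(\cdot,t)\) is a volume-preserving bi-Lipschitz homeomorphism and \(F(a,t)\in\SL(3,\R)\) is defined for every \(a\). Then \eqref{eq:stretch} gives
\[
  q_j(a,t)=\tau_j(F(a,t))^\top S(X(a,t),t)\,\tau_j(F(a,t)).
\]
Because \(X(\cdot,t)\) preserves Lebesgue measure, essential suprema over labels \(a\) equal essential suprema over physical points \(x=X(a,t)\).

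\emph{Easy direction.} Since each \(\tau_j\) is a unit vector, \(\abs{q_j}\le\abs{S}\). Hence \(g_{\mathcal D}(t)\le\sqrt M\,\norm{S(u)(t)}_{L^\infty}\), and integrating in time shows that finiteness of \(\int_0^T\norm{S}_{L^\infty}\) implies \(\mathfrak A_{\mathcal D}(T)<\infty\).

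\emph{Converse direction.} Apply Lemma~\ref{lem:congruence} at each label with \(B=S(X(a,t),t)\) and \(F=F(a,t)\). This gives
\[
  \abs{S(X(a,t),t)}\le\alpha_{\mathcal D}^{-1/2}\kappa(F(a,t))^2\Bigl(\sum_jq_j(a,t)^2\Bigr)^{1/2}.
\]
The remaining task is to bound \(\kappa(F)\) by the accumulated action. Integrating the definition of \(q_j\) from \(0\) to \(t\), with \(\abs{F(a,0)d_j}=1\), gives
\[
  e^{-\mathfrak A_{\mathcal D}(t)}\le\abs{F(a,t)d_j}\le e^{\mathfrak A_{\mathcal D}(t)}.
\]
Since the \(d_j\) span \(\R^3\), this yields \(\norm{F}^2\le c_{\mathcal D}e^{2\mathfrak A_{\mathcal D}(t)}\). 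To see this, write \(\sum_jd_j\otimes d_j\ge\mu\Id\) with \(\mu>0\); then \(\mu\norm F_{\mathrm F}^2\le\sum_j\abs{Fd_j}^2\).

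For the inverse, use \(\det F=1\). If \(s_1\ge s_2\ge s_3\) are the singular values of \(F\), then \(1/s_3=s_1s_2\le s_1^2\). So \(\norm{F^{-1}}\le\norm F^2\), and therefore \(\kappa(F)\le\norm F^3\).

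For the icosahedral system, Corollary~\ref{cor:icosahedron} gives \(\sum_jd_j\otimes d_j=2\Id\), so \(\mu=2\). Then
\[
  \norm F^2\le\norm F_{\mathrm F}^2=\tfrac12\sum_j\abs{Fd_j}^2\le3e^{2\mathfrak A_{\mathcal D}(t)}.
\]
Hence \(\kappa^2\le\norm F^6\le27e^{6\mathfrak A_{\mathcal D}(t)}\). Combining this with \(\alpha_{\mathcal D}=4/5\) from Theorem~\ref{thm:optimal-six}, and the fact that the operator norm is dominated by the Frobenius norm, gives \eqref{eq:ico-quantitative}. For a general robust \(\mathcal D\), the same argument gives
\[
  \norm{S(u)(t)}_{L^\infty}\le C_{\mathcal D}e^{6\mathfrak A_{\mathcal D}(t)}g_{\mathcal D}(t),
\]
where \(\alpha_{\mathcal D}>0\) by Theorem~\ref{thm:classification}. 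Since \(\mathfrak A_{\mathcal D}\) is nondecreasing, integrating this bound over \([0,T)\) gives \(\int_0^T\norm S_{L^\infty}\le C_{\mathcal D}e^{6\mathfrak A_{\mathcal D}(T)}\mathfrak A_{\mathcal D}(T)<\infty\), which proves \eqref{eq:action-equivalence}.

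The continuation statement will follow by combining \eqref{eq:action-equivalence} with the classical strain form of the Beale--Kato--Majda criterion for Navier--Stokes. That criterion states that \(\int_0^T\norm{S(u)}_{L^\infty}dt<\infty\) if and only if the \(H^s\) solution extends past \(T\); its proof rests on the enstrophy identity \(\frac{d}{dt}\tfrac12\norm\omega_{L^2}^2+\nu\norm{\nabla\omega}_{L^2}^2=\int\omega\cdot S\omega\) together with standard \(H^s\) energy estimates. I will cite this criterion rather than reprove it.

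The main technical point requiring care is measurability and pointwise validity of \(q_j\) and \(g_{\mathcal D}\). The essential supremum must be taken consistently in \(a\) and then transported to \(x\), and the time integrals must be handled on \([0,t]\) with \(t<T\) before passing to the limit \(t\uparrow T\). I expect the algebraic bound on \(\kappa\) to pose no real obstacle.
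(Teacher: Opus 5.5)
Your proof is correct. For the core of the theorem it follows the paper's argument step by step:
\begin{itemize}
\item the easy bound \(g_{\mathcal D}\le\sqrt M\,\norm{S(u)}_{L^\infty}\);
\item the two-sided bound \(\abs{\log\abs{F d_j}}\le\mathfrak A_{\mathcal D}(t)\);
\item the frame estimate for \(\norm F\) and the singular-value inequality \(\norm{F^{-1}}\le\norm F^2\);
\item Lemma~\ref{lem:congruence} at \(B=S(u)(X(a,t),t)\), with volume preservation to transfer the essential supremum;
\item the constant \(27\sqrt{5/4}\) from \(\sum_j d_j\otimes d_j=2\Id\) and \(\alpha_{\mathcal D}=4/5\).
\end{itemize}

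The only real divergence is the continuation step. The paper does not cite a strain criterion; it derives one. Kato's inequality gives \((\partial_t+u\cdot\nabla-\nu\Delta)\abs\omega\le\norm{S(u)(t)}_{L^\infty}\abs\omega\). The maximum principle then yields \(\norm{\omega(t)}_{L^\infty}\le\norm{\omega_0}_{L^\infty}\exp\bigl(\int_0^t\norm{S(u)}_{L^\infty}\bigr)\), and the vorticity form of Beale--Kato--Majda finishes.

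Your enstrophy sketch gives only an \(L^2\) bound on \(\omega\), that is, a uniform \(H^1\) bound on \(u\) on \([0,T)\). This suffices because \(\nu>0\) makes \(H^1\) control subcritical for three-dimensional Navier--Stokes: Leray's blow-up rate for \(\norm{\nabla u}_{L^2}\), followed by parabolic propagation of \(H^s\) regularity. So your route is valid and somewhat more elementary. However, it uses viscosity essentially and would not carry over to Euler. The paper's \(L^\infty\) maximum-principle route parallels the inviscid theory.

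On the measurability point you flag: since \(s>5/2\), \(\nabla u\) and \(F\) are continuous in \((a,t)\), so \(q_j\) is continuous on \(\T^3\times[0,T)\). The essential supremum in \(g_{\mathcal D}\) is therefore a true supremum, and \(\abs{\log\abs{F(a,t)d_j}}\le\mathfrak A_{\mathcal D}(t)\) holds for every label, not just almost every one.
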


\begin{proof}
The stretching identity \eqref{eq:stretch} gives
\[
  g_{\mathcal D}(t)
  \le
  \sqrt M\,\norm{S(u)(t)}_{L^\infty},
\]
which proves one implication.

Conversely, suppose \(\mathfrak A_{\mathcal D}(T)<\infty\).  For almost every
\(a\), every \(0\le t<T\), and every \(j\),
\[
  \abs{\log\abs{F(a,t)d_j}}
  \le \mathfrak A_{\mathcal D}(t).
\]
Since the directions span \(\R^3\), their frame operator has a positive lower
bound.  Hence
\[
  \norm{F(a,t)}
  \le C_{\mathcal D}e^{\mathfrak A_{\mathcal D}(t)}.
\]
Because \(\det F=1\), the singular values give
\[
  \norm{F(a,t)^{-1}}
  \le \norm{F(a,t)}^2,
  \qquad
  \kappa(F(a,t))^2
  \le C_{\mathcal D}e^{6\mathfrak A_{\mathcal D}(t)}.
\]
Apply Lemma~\ref{lem:congruence} to
\[
  B=S(u)(X(a,t),t).
\]
Using \eqref{eq:stretch} and the fact that \(X(\cdot,t)\) preserves volume,
\[
  \esssup_a\abs{S(u)(X(a,t),t)}
  =
  \norm{S(u)(t)}_{L^\infty}.
\]
Taking the essential supremum in \(a\) and then integrating in time yields
\[
  \int_0^T\norm{S(u)(t)}_{L^\infty}\,dt
  \le
  C_{\mathcal D}
  e^{6\mathfrak A_{\mathcal D}(T)}
  \mathfrak A_{\mathcal D}(T)
  <\infty.
\]

For the icosahedral system,
\[
  \sum_{j=1}^6d_j\otimes d_j=2\Id,
\]
and therefore
\[
  2\norm{F}_{\mathrm F}^2
  =
  \sum_{j=1}^6\abs{Fd_j}^2
  \le 6e^{2\mathfrak A_{\mathcal D}(t)}.
\]
Thus \(\norm F\le\sqrt3e^{\mathfrak A_{\mathcal D}(t)}\),
\(\norm{F^{-1}}\le3e^{2\mathfrak A_{\mathcal D}(t)}\), and
\(\kappa(F)^2\le27e^{6\mathfrak A_{\mathcal D}(t)}\).  Combining this with
\(\alpha_{\mathcal D}=4/5\) in Lemma~\ref{lem:congruence} proves
\eqref{eq:ico-quantitative}.

Finally, Kato's inequality applied to the vorticity equation
\[
  \partial_t\omega+u\cdot\nabla\omega-\nu\Delta\omega=S(u)\omega
\]
gives, in the distributional sense,
\[
  (\partial_t+u\cdot\nabla-\nu\Delta)\abs\omega
  \le
  \norm{S(u)(t)}_{L^\infty}\abs\omega.
\]
The maximum principle therefore yields
\[
  \norm{\omega(t)}_{L^\infty}
  \le
  \norm{\omega_0}_{L^\infty}
  \exp\!\left(
    \int_0^t\norm{S(u)(r)}_{L^\infty}\,dr
  \right).
\]
Thus \(\omega\in L^1(0,T;L^\infty)\), and the standard
Beale--Kato--Majda--Kato--Ponce continuation estimate
\cite{BealeKatoMajda1984,KatoPonce1988,BeiraoDaVeiga1995}
gives an \(H^s\) extension beyond \(T\).  Conversely, if the solution extends
in \(H^s\) beyond \(T\), Sobolev embedding makes its strain bounded on a
larger compact time interval, giving the reverse necessity.
\end{proof}

\begin{remark}
Theorem~\ref{thm:NS-observability} is an exact Lagrangian representation of a
classical continuation action.  It is not a proof of global regularity and
does not weaken the Eulerian strain hypothesis.
\end{remark}

\subsection{A one-sided fixed-label criterion for three-dimensional Euler}

Let \(0<T<\infty\), \(s>5/2\), and let
\[
  u\in C([0,T);H^s(\T^3))
  \cap C^1([0,T);H^{s-1}(\T^3))
\]
be a solution of
\begin{equation}\label{eq:Euler}
  \partial_tu+u\cdot\nabla u+\nabla p=0,
  \qquad
  \nabla\cdot u=0.
\end{equation}
Set
\[
  S(u)=\frac12(\nabla u+\nabla u^\top),
  \qquad
  \omega=\nabla\times u,
  \qquad
  \omega_0=\nabla\times u(\cdot,0).
\]
Let \(X\) be its Lagrangian flow, \(F=\nabla_aX\), and let
\(\mathcal E=\{e_1,\ldots,e_M\}\subset\sphere^2\) span \(\R^3\).  Set
\[
  \beta_{\mathcal E}
  =
  \lambda_{\min}
  \left(\sum_{j=1}^Me_j\otimes e_j\right)>0
\]
and
\[
  q_j(a,t)
  =
  \partial_t\log\abs{F(a,t)e_j}
  =
  \tau_j(a,t)^\top S(u)(X(a,t),t)\tau_j(a,t),
  \qquad
  \tau_j=\frac{Fe_j}{\abs{Fe_j}}.
\]
For \(0<t<T\), define the one-sided fixed-trajectory action
\begin{equation}\label{eq:Euler-positive-action}
  \mathfrak P_{\mathcal E}(t)
  =
  \esssup_{a\in\T^3}
  \int_0^t
  \left(
    \sum_{j=1}^M[q_j(a,r)]_+^2
  \right)^{1/2}dr,
\end{equation}
and set
\[
  \mathfrak P_{\mathcal E}(T)
  =
  \lim_{t\uparrow T}\mathfrak P_{\mathcal E}(t)
  \in[0,\infty].
\]

\begin{theorem}[One-sided fixed-trajectory Euler criterion]
\label{thm:Euler-fixed-trajectory}
The solution \(u\) extends in \(H^s\) beyond \(T\) if and only if
\[
  \mathfrak P_{\mathcal E}(T)<\infty.
\]
More precisely, for \(0<t<T\),
\begin{equation}\label{eq:Euler-vorticity-bound}
  \norm{\omega(t)}_{L^\infty}
  \le
  \beta_{\mathcal E}^{-1/2}\sqrt M\,
  e^{\mathfrak P_{\mathcal E}(t)}
  \norm{\omega_0}_{L^\infty}.
\end{equation}
Three directions suffice.  Three is the minimum cardinality for this uniform
frame-based control of arbitrary deformation gradients.  Among three unit
directions, \(\beta_{\mathcal E}\le1\), with equality if and only if the
directions form an orthonormal basis.
\end{theorem}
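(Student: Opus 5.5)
The plan is to bound vorticity along each trajectory by a Cauchy-type formula for Euler and a one-sided growth bound for $F$, then finish with Beale--Kato--Majda; the cardinality claims are handled by separate frame arguments.

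\textbf{Vorticity bound.} For Euler, $\omega(X(a,t),t)=F(a,t)\omega_0(a)$. This follows because both sides solve $\dot Y=\nabla u(X,t)Y$ with the same initial data, since $D_t\omega=(\nabla u)\omega$. Hence $\abs{\omega(X(a,t),t)}\le\norm{F(a,t)}\,\abs{\omega_0(a)}$, and it suffices to bound $\norm{F(a,t)}$ using only the positive parts $[q_j]_+$. For each $j$,
\[
\log\abs{F(a,t)e_j}=\int_0^tq_j(a,r)\,dr\le\int_0^t[q_j(a,r)]_+\,dr\le\int_0^t\Bigl(\sum_i[q_i]_+^2\Bigr)^{1/2}dr\le\mathfrak P_{\mathcal E}(t)
\]
for a.e.\ $a$; note that $\abs{Fe_j}$ starts at $1$. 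The frame inequality then gives, for unit $z$,
\[
\beta_{\mathcal E}\abs{F^\top z}^2\le\sum_j(e_j\cdot F^\top z)^2=\sum_j(Fe_j\cdot z)^2\le Me^{2\mathfrak P}.
\]
So $\norm F\le\beta_{\mathcal E}^{-1/2}\sqrt M e^{\mathfrak P_{\mathcal E}(t)}$. Because $X(\cdot,t)$ is a measure-preserving homeomorphism, the essential supremum over labels equals the $L^\infty$ norm, and \eqref{eq:Euler-vorticity-bound} follows. No $\det F=1$ argument is needed; that is the advantage of the one-sided quantity.

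\textbf{Continuation criterion.} If $\mathfrak P_{\mathcal E}(T)<\infty$, then $\omega\in L^\infty(0,T;L^\infty)$, and Beale--Kato--Majda gives the $H^s$ extension. Conversely, if $u$ extends in $H^s$ beyond $T$, then $S(u)$ is bounded on $[0,T]$. Since $\abs{q_j}\le\norm{S}_{L^\infty}$, we get $\mathfrak P_{\mathcal E}(T)\le\sqrt M\,T\sup_t\norm{S}_{L^\infty}<\infty$.

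\textbf{Minimality of three directions.} With fewer than three directions, the vectors $e_j$ fail to span $\R^3$. Take $F=\operatorname{diag}$-type matrices in $\SL(3,\R)$ that fix $\operatorname{span}\mathcal E$ up to a bounded factor and stretch its orthogonal complement. Then every $\abs{Fe_j}$ stays bounded while $\norm F\to\infty$, so no bound of the form $\norm F\le C\max_j\abs{Fe_j}$ can hold. The main care needed is to state what "uniform frame-based control" means: I will interpret it as an estimate $\norm F\le C\max_j\abs{Fe_j}$ valid for all $F\in\SL(3,\R)$, and that is exactly what fails here. Three orthonormal directions achieve such control, so three suffice.

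\textbf{Bound on $\beta_{\mathcal E}$.} For three unit directions, the trace of $\sum_je_j\otimes e_j$ equals $3$, so its smallest eigenvalue is at most $1$. Equality forces all three eigenvalues to equal $1$, so $\sum_je_j\otimes e_j=\Id$. Taking traces of squares gives $\sum_{i,j}(e_i\cdot e_j)^2=3$, which forces $e_i\cdot e_j=0$ for $i\ne j$; hence the directions form an orthonormal basis.
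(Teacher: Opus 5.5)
Your proposal is correct and follows the paper's proof essentially step for step. The shared chain is: the one-sided bound $\log\abs{F(a,t)e_j}\le\mathfrak P_{\mathcal E}(t)$, the lower frame bound to control $F$, the Euler Cauchy formula $\omega(X(a,t),t)=F(a,t)\omega_0(a)$, Beale--Kato--Majda for sufficiency, $\abs{q_j}\le\norm{S(u)}_{L^\infty}$ for necessity, and the trace of the frame operator for $\beta_{\mathcal E}\le1$. The only differences are minor: you bound the operator norm of $F$ through $F^\top z$ rather than the Frobenius norm through $\tr(F^\top F G_{\mathcal E})$; you make the minimality claim explicit with a stretching matrix in $\SL(3,\R)$, which is the same device the paper uses in its stochastic three-line theorem; and you spell out the equality case via $\tr(G_{\mathcal E}^2)$.
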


\begin{proof}
For almost every \(a\), every \(0<t<T\), and each \(j\),
\[
  \log\abs{F(a,t)e_j}
  =
  \int_0^tq_j(a,r)\,dr
  \le
  \int_0^t[q_j(a,r)]_+\,dr
  \le
  \mathfrak P_{\mathcal E}(t).
\]
Writing \(G_{\mathcal E}=\sum_je_j\otimes e_j\), the inequality
\(G_{\mathcal E}\ge\beta_{\mathcal E}\Id\) gives
\[
  \beta_{\mathcal E}\norm{F(a,t)}_{\mathrm F}^2
  \le
  \sum_{j=1}^M\abs{F(a,t)e_j}^2
  \le
  M e^{2\mathfrak P_{\mathcal E}(t)}.
\]
The Euler Cauchy formula
\[
  \omega(X(a,t),t)=F(a,t)\omega_0(a)
\]
therefore yields \eqref{eq:Euler-vorticity-bound}.  If
\(\mathfrak P_{\mathcal E}(T)<\infty\), then
\[
  \int_0^T\norm{\omega(t)}_{L^\infty}\,dt
  \le
  T\beta_{\mathcal E}^{-1/2}\sqrt M\,
  e^{\mathfrak P_{\mathcal E}(T)}
  \norm{\omega_0}_{L^\infty}<\infty,
\]
and the Beale--Kato--Majda criterion
\cite{BealeKatoMajda1984,KatoPonce1988} gives continuation beyond \(T\).

Conversely, if \(u\) extends in \(H^s\) beyond \(T\), then
\[
  \mathfrak P_{\mathcal E}(T)
  \le
  \sqrt M\int_0^T\norm{S(u)(t)}_{L^\infty}\,dt<\infty.
\]

Finally, a spanning family in \(\R^3\) contains at least three vectors.  If
\(M=3\) and the vectors are unit, then
\[
  \beta_{\mathcal E}
  \le
  \frac13\tr
  \left(\sum_{j=1}^3e_j\otimes e_j\right)=1.
\]
Equality holds exactly when the frame operator is the identity, equivalently
when the three directions form an orthonormal basis.
\end{proof}

\begin{corollary}[Deterministic finite-line Cauchy criterion]
\label{cor:Euler-net-lines}
Define
\[
  \mathcal L_{\mathcal E}^{\rm Eul}(t)
  =
  \esssup_{a\in\T^3}
  \left(
    \sum_{j=1}^M\abs{F(a,t)e_j}^2
  \right)^{1/2}.
\]
Then
\[
  \norm{\omega(t)}_{L^\infty}
  \le
  \beta_{\mathcal E}^{-1/2}
  \norm{\omega_0}_{L^\infty}
  \mathcal L_{\mathcal E}^{\rm Eul}(t),
\]
and \(u\) extends in \(H^s\) beyond \(T\) if and only if
\[
  \int_0^T\mathcal L_{\mathcal E}^{\rm Eul}(t)\,dt<\infty.
\]
\end{corollary}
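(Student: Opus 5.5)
The proof will follow the argument for Theorem~\ref{thm:Euler-fixed-trajectory}, with the one-sided action replaced directly by the line-stretching quantity. First, the pointwise vorticity bound. For almost every label \(a\) we combine the Cauchy formula \(\omega(X(a,t),t)=F(a,t)\omega_0(a)\) with the frame inequality \(G_{\mathcal E}=\sum_je_j\otimes e_j\ge\beta_{\mathcal E}\Id\). Then
\[
  \beta_{\mathcal E}\norm{F(a,t)}_{\mathrm F}^2
  \le
  \sum_{j=1}^M\abs{F(a,t)e_j}^2
  =
  \tr\bigl(F^\top FG_{\mathcal E}\bigr),
\]
and therefore
\[
  \abs{\omega(X(a,t),t)}
  \le
  \norm{F(a,t)}\abs{\omega_0(a)}
  \le
  \beta_{\mathcal E}^{-1/2}\Bigl(\sum_j\abs{F(a,t)e_j}^2\Bigr)^{1/2}\norm{\omega_0}_{L^\infty}.
\]
Since \(X(\cdot,t)\) is a measure-preserving bijection of \(\T^3\), the essential supremum over \(a\) of the left side equals \(\norm{\omega(t)}_{L^\infty}\). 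Taking essential suprema of both sides gives the displayed estimate with \(\mathcal L_{\mathcal E}^{\rm Eul}(t)\).

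Next, sufficiency. If \(\int_0^T\mathcal L_{\mathcal E}^{\rm Eul}\,dt<\infty\), the bound gives \(\omega\in L^1(0,T;L^\infty)\), and the Beale--Kato--Majda criterion \cite{BealeKatoMajda1984,KatoPonce1988} yields continuation in \(H^s\) beyond \(T\).

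For necessity, suppose \(u\) extends beyond \(T\). Then \(\nabla u\) is bounded on \(\T^3\times[0,T]\) by Sobolev embedding (\(s>5/2\)). Gronwall applied to \(\partial_tF=\nabla u(X,t)F\) gives \(\abs{Fe_j}\le\exp\bigl(\int_0^t\norm{\nabla u}_{L^\infty}\bigr)\). Alternatively, one can use \(\log\abs{Fe_j}\le\mathfrak P_{\mathcal E}(t)\) together with Theorem~\ref{thm:Euler-fixed-trajectory}. Either way \(\mathcal L_{\mathcal E}^{\rm Eul}(t)\le\sqrt M\,C\) uniformly on \([0,T)\), so its integral is finite.

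The only delicate point is measurability and the essential supremum in \(a\). The flow is a \(C^1\) volume-preserving diffeomorphism for \(t<T\), so composition preserves null sets and the essential suprema transfer. Everything else reduces to the routine frame inequality and the results already cited.
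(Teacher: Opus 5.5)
Your proposal is correct and follows essentially the same route as the paper. The lower frame bound \(\tr(F^\top F G_{\mathcal E})\ge\beta_{\mathcal E}\norm{F}_{\mathrm F}^2\) combined with the Euler Cauchy formula gives the vorticity estimate, Beale--Kato--Majda gives sufficiency, and Gronwall applied to \(\partial_tF=(\nabla u)(X,t)F\) gives necessity. Your alternative necessity argument via \(\log\abs{Fe_j}\le\mathfrak P_{\mathcal E}(t)\) and Theorem~\ref{thm:Euler-fixed-trajectory} is also valid; it is the bound \(\mathcal L_{\mathcal E}^{\rm Eul}(t)\le\sqrt M\,e^{\mathfrak P_{\mathcal E}(t)}\) recorded in the paper's subsequent remark.
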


\begin{proof}
The frame bound and the Euler Cauchy formula give the displayed estimate.
Its time integral and the Beale--Kato--Majda criterion prove sufficiency.
If the solution extends, Gronwall's inequality applied to
\(\partial_tF=(\nabla u)(X,t)F\) makes
\(\mathcal L_{\mathcal E}^{\rm Eul}\) bounded on \([0,T]\), proving
necessity.
\end{proof}

\begin{remark}
The criterion uses only positive stretching and places the essential
supremum over material labels outside the time integral; quantitatively,
\[
  \mathcal L_{\mathcal E}^{\rm Eul}(t)
  \le
  \sqrt M\,e^{\mathfrak P_{\mathcal E}(t)}.
\]
Its
three-direction minimality concerns uniform finite-frame control of an
arbitrary deformation gradient, not all conceivable PDE-specific Euler
criteria.  The PDE-specific input is the Euler Cauchy formula; therefore the
result is not contradicted by the purely kinematic moving-pulse construction
in Theorem~\ref{thm:kinematic-firewall}.
\end{remark}

\subsection{A three-line stochastic Lagrangian continuation criterion}

We record a consequence of the stochastic Cauchy formula that is naturally
adapted to finite material-line observations.  Let \(\nu>0\),
\(0<T<\infty\), \(s>5/2\), and let
\[
  u\in
  C([0,T);H^s_\sigma(\T^3))
  \cap
  L^2_{\mathrm{loc}}([0,T);H^{s+1}_\sigma(\T^3))
\]
be a strong solution of \eqref{eq:NS}.  Set
\[
  S(u)=\frac12(\nabla u+\nabla u^\top),
  \qquad
  \omega=\nabla\times u,
  \qquad
  \omega_0=\nabla\times u(\cdot,0).
\]
Let \(W\) be a standard
three-dimensional Brownian motion and let
\[
  dX_t(a)
  =
  u(X_t(a),t)\,dt+\sqrt{2\nu}\,dW_t,
  \qquad
  X_0(a)=a.
\]
We regard \(W_t\) as a spatially uniform Brownian translation of the torus,
and all expectations below are with respect to \(W\).  Since the noise is
independent of the label,
\[
  dJ_t(a)
  =
  (\nabla u)(X_t(a),t)J_t(a)\,dt,
  \qquad
  \det J_t(a)
  =
  \exp\!\left(
    \int_0^t(\nabla\cdot u)(X_r(a),r)\,dr
  \right)
  =
  1.
\]
Moreover,
\[
  Y_t(a)=X_t(a)-\sqrt{2\nu}\,W_t
\]
is the flow of a random \(C^1\) time-dependent vector field.  Thus, pathwise,
\(X_t\) is a volume-preserving \(C^1\) diffeomorphism.  Write
\[
  A_t=X_t^{-1},
  \qquad
  J_t(a)=\nabla_aX_t(a).
\]
For fixed \(d\in\sphere^2\),
\begin{align*}
  r_d(a,t)
  &:=
  \log\abs{J_t(a)d}
  =
  \int_0^t
  \bigl(\tau_d^{\rm st}(a,s)\bigr)^\top
  S(u)(X_s(a),s)
  \tau_d^{\rm st}(a,s)\,ds,\\
  \tau_d^{\rm st}(a,s)
  &:=
  \frac{J_s(a)d}{\abs{J_s(a)d}},
\end{align*}
pathwise in the Brownian realization.

Let \(\mathcal E=\{e_1,\ldots,e_M\}\subset\sphere^2\) span \(\R^3\), and set
\[
  \beta_{\mathcal E}
  =
  \lambda_{\min}
  \left(\sum_{j=1}^M e_j\otimes e_j\right)>0.
\]
Define the stochastic finite-line action
\begin{equation}\label{eq:stochastic-action}
  \mathcal L_{\mathcal E}(t)
  =
  \esssup_{x\in\T^3}
  \mathbb E
  \left[
    \left(
      \sum_{j=1}^M
      e^{2r_{e_j}(A_t(x),t)}
    \right)^{1/2}
  \right].
\end{equation}

\begin{theorem}[Stochastic finite-line criterion]
\label{thm:stochastic-lines}
Under the preceding assumptions, let
\(\mathcal E\subset\sphere^2\) span \(\R^3\).  Then
\begin{equation}\label{eq:stochastic-continuation}
  u\ \hbox{extends in }H^s\hbox{ beyond }T
  \quad\Longleftrightarrow\quad
  \int_0^T\mathcal L_{\mathcal E}(t)\,dt<\infty.
\end{equation}
More precisely,
\begin{equation}\label{eq:stochastic-vorticity-bound}
  \norm{\omega(t)}_{L^\infty}
  \le
  \beta_{\mathcal E}^{-1/2}
  \norm{\omega_0}_{L^\infty}
  \mathcal L_{\mathcal E}(t).
\end{equation}
Three fixed initial directions are sufficient, and three is the smallest
number whose fixed-direction line lengths give uniform pathwise frame control
of every deformation gradient in \(\SL(3,\R)\).
\end{theorem}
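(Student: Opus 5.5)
The plan follows Corollary~\ref{cor:Euler-net-lines}, with the Euler Cauchy formula replaced by the stochastic Cauchy formula of Constantin--Iyer \cite{ConstantinIyer2008}. Because the noise is a spatially uniform translation, that formula should take the form
\[
  \omega(x,t)=\mathbb E\bigl[J_t(A_t(x))\,\omega_0(A_t(x))\bigr].
\]
To justify it, I would pass to the pathwise volume-preserving $C^1$ flow $Y_t$. Since $X_t$ and $Y_t$ differ only by a label-independent shift, $\nabla_aX_t=\nabla_aY_t=J_t$ pathwise. Applying It\^o's formula to $J_t(a)^{-1}\omega(X_t(a),t)$ shows that this process is a martingale, because the viscous term $\nu\Delta\omega$ is exactly the It\^o correction. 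Taking expectations and inverting $X_t$ then gives the displayed formula; at this point I would simply cite \cite{ConstantinIyer2008}.

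Given the formula, the bound \eqref{eq:stochastic-vorticity-bound} is a frame estimate. Write $G_{\mathcal E}=\sum_je_j\otimes e_j\ge\beta_{\mathcal E}\Id$. For any matrix $J$,
\[
  \beta_{\mathcal E}\norm{J}^2\le\beta_{\mathcal E}\norm{J}_{\mathrm F}^2\le\sum_j\abs{Je_j}^2,
  \qquad
  \abs{J_t(a)e_j}=e^{r_{e_j}(a,t)}.
\]
So $\abs{\omega(x,t)}\le\mathbb E\bigl[\norm{J_t(A_t(x))}\bigr]\norm{\omega_0}_{L^\infty}\le\beta_{\mathcal E}^{-1/2}\norm{\omega_0}_{L^\infty}\,\mathbb E\bigl[(\sum_je^{2r_{e_j}(A_t(x),t)})^{1/2}\bigr]$. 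Taking the essential supremum in $x$ gives \eqref{eq:stochastic-vorticity-bound}. A small measure-theoretic point needs checking here: essential suprema should be preserved under the measure-preserving maps $A_t$, and measurability in $(x,\omega)$ follows from joint continuity of the flow.

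For the equivalence \eqref{eq:stochastic-continuation}, sufficiency is immediate. If $\int_0^T\mathcal L_{\mathcal E}\,dt<\infty$, then $\omega\in L^1(0,T;L^\infty)$, and the Beale--Kato--Majda/Kato--Ponce/Beir\~ao da Veiga criterion \cite{BealeKatoMajda1984,KatoPonce1988,BeiraoDaVeiga1995} gives $H^s$ continuation. For necessity, if $u$ extends beyond $T$, Sobolev embedding bounds $\norm{\nabla u}_{L^\infty}\le C$ on $[0,T]$. Pathwise Gronwall applied to $dJ=(\nabla u)J\,dt$ gives $\abs{J_t(a)e_j}\le e^{Ct}$ deterministically. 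Hence $\mathcal L_{\mathcal E}(t)\le\sqrt M e^{CT}$ is bounded and integrable.

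For the cardinality claim, sufficiency of three directions follows by taking an orthonormal basis, for which $\beta_{\mathcal E}=1$. For minimality, with $M\le2$ the directions fail to span, so some $z\ne0$ is orthogonal to all of them. Choose the matrices $F_k=\operatorname{diag}$ in a basis adapted to $\operatorname{span}\mathcal E\oplus z$, equal to $k^{-1/2}$ on the span (of dimension $\le2$) and $k$ or $k^{2}$ along $z$, normalized so that $\det F_k=1$. Then $\abs{F_ke_j}\to0$ while $\norm{F_k}\to\infty$, so no bound $\norm F\le C(\abs{Fe_j})_j$ can hold on $\SL(3,\R)$.

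The main obstacle is the rigorous derivation of the stochastic Cauchy formula with uniform translation noise and the inversion $A_t$, together with the measurability needed to exchange the essential supremum and the expectation. I would defer this to \cite{ConstantinIyer2008} and \cite{BusnelloFlandoliRomito2005}.
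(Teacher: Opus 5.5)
Your proof has the same structure as the paper's. You cite the Constantin--Iyer stochastic Cauchy formula and use the pathwise frame bound \(\norm{J}_{\mathrm{op}}\le\norm{J}_{\mathrm F}\le\beta_{\mathcal E}^{-1/2}\bigl(\sum_j\abs{Je_j}^2\bigr)^{1/2}\) with \(\abs{J_te_j}=e^{r_{e_j}}\), taking the expectation before the essential supremum. You then invoke Beale--Kato--Majda for sufficiency and pathwise Gronwall for necessity, and use a determinant-one stretch orthogonal to \(\operatorname{span}\mathcal E\) for minimality. For that last step, the paper's \(R\,v\otimes v+R^{-1/2}(\Id-v\otimes v)\) handles all cases at once, since \(\operatorname{span}\mathcal E\subset v^\perp\), and avoids your ``\(k\) or \(k^2\)'' case split.

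The heuristic you give for the Cauchy formula, however, is wrong and should be dropped. Along forward trajectories the It\^o correction does not cancel the viscous term; it doubles it. Using \(\partial_t\omega=-u\cdot\nabla\omega+\nu\Delta\omega+(\nabla u)\omega\) and \(d(J_t^{-1})=-J_t^{-1}(\nabla u)(X_t,t)\,dt\), one gets
\[
  d\bigl[J_t(a)^{-1}\omega(X_t(a),t)\bigr]
  =
  2\nu\,J_t(a)^{-1}\Delta\omega(X_t(a),t)\,dt
  +\sqrt{2\nu}\,J_t(a)^{-1}\nabla\omega(X_t(a),t)\,dW_t .
\]
So this process is not a martingale. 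Even a fixed-label identity of that kind could not be transferred to \(\omega(x,t)\) by substituting the random, path-dependent label \(a=A_t(x)\) inside the expectation.

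The cancellation you have in mind occurs in the Eulerian frame. With uniform noise, \(\Theta_t=(J_t\omega_0)\circ Y_t^{-1}\) solves, pathwise,
\[
  \partial_t\Theta+\tilde u\cdot\nabla\Theta=(\nabla\tilde u)\Theta,
  \qquad
  \tilde u(y,t)=u(y+\sqrt{2\nu}W_t,t).
\]
Set \(\widetilde\omega_t(x)=\Theta_t(x-\sqrt{2\nu}W_t)=J_t(A_t(x))\omega_0(A_t(x))\). It\^o's formula at fixed \(x\) gives \(\widetilde\omega\) the drift \(-u\cdot\nabla\widetilde\omega+\nu\Delta\widetilde\omega+(\nabla u)\widetilde\omega\). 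Thus \(\mathbb E\widetilde\omega\) solves the linear vorticity equation with coefficient \(u\) and datum \(\omega_0\), and uniqueness for that linear problem yields \(\mathbb E\widetilde\omega=\omega\). Since both you and the paper ultimately rest on the citation, the rest of your argument is unaffected.
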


\begin{proof}
The stochastic Cauchy formula \cite{ConstantinIyer2008} is
\[
  \omega(x,t)
  =
  \mathbb E
  \left[
    J_t(A_t(x))\,
    \omega_0(A_t(x))
  \right].
\]
For smooth data this is the classical formula; the stated \(H^s\) case
follows by approximation and stability of the \(C^1\) stochastic flows.
Consequently, with
\(
Z=J_t(A_t(x))\omega_0(A_t(x))
\),
\[
  \abs{\omega(x,t)}
  =
  \abs{\mathbb EZ}
  \le
  \mathbb E\abs Z
  \le
  \norm{\omega_0}_{L^\infty}
  \mathbb E\norm{J_t(A_t(x))}_{\mathrm{op}}.
\]
The lower frame bound gives, for every \(J\in\R^{3\times3}\),
\[
  \norm{J}_{\mathrm F}^2
  \le
  \beta_{\mathcal E}^{-1}
  \sum_{j=1}^M\abs{Je_j}^2.
\]
Indeed, if \(G_{\mathcal E}=\sum_je_j\otimes e_j\), then
\[
  \sum_j\abs{Je_j}^2
  =
  \tr(J^\top JG_{\mathcal E})
  \ge
  \beta_{\mathcal E}\norm{J}_{\mathrm F}^2.
\]
Since \(\norm J_{\mathrm{op}}\le\norm J_{\mathrm F}\), applying this estimate
pathwise with \(J=J_t(A_t(x))\), using
\(\abs{J_te_j}=e^{r_{e_j}}\), then taking expectation and only afterward the
essential supremum in \(x\), proves
\eqref{eq:stochastic-vorticity-bound}.  Thus finiteness of the right-hand
side of \eqref{eq:stochastic-continuation} implies
\[
  \int_0^T\norm{\omega(t)}_{L^\infty}\,dt<\infty.
\]
The Beale--Kato--Majda--Kato--Ponce continuation mechanism
\cite{BealeKatoMajda1984,KatoPonce1988} yields extension beyond \(T\).

Conversely, if the solution extends beyond \(T\), then \(\nabla u\) is bounded
on a larger compact time interval.  The equation
\[
  dJ_t=(\nabla u)(X_t,t)J_t\,dt
\]
and Gronwall's inequality give, realization-independently,
\[
  \norm{J_t(a)}_{\mathrm{op}}
  \le
  \exp\left(\int_0^t\norm{\nabla u(r)}_{L^\infty}\,dr\right).
\]
Hence
\[
  \mathcal L_{\mathcal E}(t)
  \le
  \sqrt M
  \exp\left(\int_0^t\norm{\nabla u(r)}_{L^\infty}\,dr\right),
\]
so the action is integrable and the reverse implication follows.

Finally, a finite family has a positive lower frame bound exactly when it
spans \(\R^3\), so at least three directions are necessary and any basis is
sufficient.  If the family does not span, choose a unit vector \(v\) orthogonal
to its span and set
\[
  J_R
  =
  R\,v\otimes v
  +
  R^{-1/2}(\Id-v\otimes v)
  \in\Sym^+(3)\cap\SL(3,\R).
\]
Then all observed lengths tend to zero as \(R\to\infty\), whereas
\(\norm{J_R}=R\).  Thus fewer than three line lengths cannot control arbitrary
volume-preserving deformation.
\end{proof}

\begin{remark}
Theorem~\ref{thm:stochastic-lines} uses net stochastic line elongation rather
than absolute strain accumulation.  It is a finite-sensor consequence of the
stochastic Cauchy formula, in the lineage of the probabilistic vorticity
representations and continuation principles in
\cite{BusnelloFlandoliRomito2005,ConstantinIyer2008}, not a new stochastic
representation or a global regularity theorem.  Whether its action is
strictly weaker than classical criteria on the class of Navier--Stokes
solutions is a separate PDE question.
\end{remark}

\section{Sharp limitations of finite material sensing}

\subsection{One-sided sensing is never uniformly robust}

For a trace-free strain tensor, a natural physical observable is its positive
stretching part.  The next result shows that no finite fixed system controls
this quantity pointwise after arbitrary volume-preserving transport.

\begin{theorem}[Finite one-sided no-go]\label{thm:one-sided}
Let \(n\ge2\) and let
\(\mathcal D=\{d_1,\ldots,d_M\}\subset\sphere^{n-1}\) be finite, with
\(M\ge1\).  There exist
\(F\in\SL(n,\R)\) and \(0\ne B\in\Symz(n)\) such that
\[
  \left(\frac{Fd_j}{\abs{Fd_j}}\right)^\top
  B
  \left(\frac{Fd_j}{\abs{Fd_j}}\right)
  <0
  \qquad(1\le j\le M).
\]
Consequently no estimate of the form
\[
  \norm B
  \le C
  \max_j
  \left[
    \left(\frac{Fd_j}{\abs{Fd_j}}\right)^\top
    B
    \left(\frac{Fd_j}{\abs{Fd_j}}\right)
  \right]_+
\]
can hold uniformly for all \(F\in\SL(n,\R)\) and \(B\in\Symz(n)\).
\end{theorem}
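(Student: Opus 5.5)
The plan is to use the same congruence reduction as in Theorem~\ref{thm:classification}: we have
\[
\tau_j(F)^\top B\tau_j(F)=\frac{d_j^\top Kd_j}{\abs{Fd_j}^2},\qquad K=F^\top BF .
\]
So it suffices to find a nonzero symmetric \(K\) with \(d_j^\top Kd_j<0\) for all \(j\), together with some \(F\in\SL(n,\R)\) for which \(B=F^{-\top}KF^{-1}\) is trace-free. Trace-freeness of \(B\) is the condition \(\tr(AK)=0\) with \(A=(FF^\top)^{-1}\). Any positive-definite \(A\) of determinant one arises this way, via \(F=A^{-1/2}\) (after normalizing by a positive scalar, which does not affect \(\tr(AK)=0\)).

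To build \(K\), pick a unit vector \(v\) and set
\[
K=\varepsilon\Id-v\otimes v .
\]
Then
\[
d_j^\top Kd_j=\varepsilon-(v\cdot d_j)^2 .
\]
Choose \(v\) so that \(v\cdot d_j\neq0\) for every \(j\). This is possible because the finitely many hyperplanes \(d_j^\perp\) cannot cover \(\sphere^{n-1}\). Now take
\[
0<\varepsilon<\min_j (v\cdot d_j)^2 .
\]
With these choices every \(d_j^\top Kd_j<0\). Since \(\varepsilon<1\) and \(n\ge2\), \(K\) has eigenvalue \(\varepsilon-1<0\) on \(v\) and eigenvalue \(\varepsilon>0\) on \(v^\perp\). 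Thus \(K\) is indefinite and nonzero.

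Next choose \(A\) diagonal in the eigenbasis of \(K\): weight \(1\) on \(v\), and \(\mu>0\) on \(v^\perp\), with
\[
\mu(n-1)\varepsilon=1-\varepsilon ,
\]
so that \(\tr(AK)=0\). Rescale \(A\) to determinant one and set \(F=A^{-1/2}\). Then \(B=F^{-\top}KF^{-1}\neq0\), \(\tr B=\tr(AK)=0\), and
\[
\tau_j(F)^\top B\tau_j(F)=\frac{d_j^\top Kd_j}{\abs{Fd_j}^2}<0
\]
for all \(j\), which is the claim.

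The consequence is immediate. For this pair the right-hand side \(C\max_j[\cdot]_+\) vanishes, while \(\norm B>0\), so no constant \(C\) works. The only real point is the existence of \(v\) avoiding all \(d_j^\perp\) together with the strict margin \(\varepsilon\). Note that this is exactly where negativity, rather than the mere vanishing used in Theorem~\ref{thm:classification}, enters, and it is why no spanning hypothesis can rescue one-sided sensing.
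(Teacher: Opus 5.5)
Your proof is correct. It reaches the result by a different derivation from the paper's, although it ends up with the same witness pair. The paper works directly on the deformed side. It fixes $e$ with $e\cdot d_j\neq0$ and takes the explicit family $F_R=R^{n-1}e\otimes e+R^{-1}(\Id-e\otimes e)$ together with $B=\Id-n\,e\otimes e$. It then argues by a limit: the transported directions $F_Rd_j/\abs{F_Rd_j}$ converge to $\pm e$, and $e^\top Be=1-n<0$, so every observation is negative for $R$ sufficiently large.

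You work instead on the pulled-back side through $K=F^\top BF$. There you make the negativity exact with the explicit margin $\varepsilon<\min_j(v\cdot d_j)^2$, and then reuse the trace-balancing step from the proof of Theorem~\ref{thm:classification}.

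Unwinding your choices shows the two constructions coincide. Write $A=c\bigl(v\otimes v+\mu(\Id-v\otimes v)\bigr)$, where $c>0$ is your determinant normalization. Then $B=A^{1/2}KA^{1/2}=\frac{c(1-\varepsilon)}{n-1}(\Id-n\,v\otimes v)$, a positive multiple of the paper's $B$. Likewise $F=A^{-1/2}$ is exactly $F_R$ with $e=v$ and $R^{2n}=\mu=(1-\varepsilon)/((n-1)\varepsilon)$. Your condition $\varepsilon<(v\cdot d_j)^2$ is then precisely $(n-1)R^{2n}(v\cdot d_j)^2>1-(v\cdot d_j)^2$, an explicit threshold for the paper's ``sufficiently large $R$''.

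The paper's route is shorter and more geometric: all channels collapse onto one compressive axis. Yours is fully quantitative and avoids the limiting argument. It also makes visible that the one-sided failure is the strict-inequality analogue of the kernel construction in the classification theorem.
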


\begin{proof}
Choose \(e\in\sphere^{n-1}\) such that \(e\cdot d_j\ne0\) for all \(j\);
this is possible because a finite union of great spheres does not cover
\(\sphere^{n-1}\).  For \(R>1\), set
\[
  F_R=R^{n-1}e\otimes e+R^{-1}(\Id-e\otimes e).
\]
Then \(F_R\in\Sym^+(n)\cap\SL(n,\R)\), and
\[
  \frac{F_Rd_j}{\abs{F_Rd_j}}
  \longrightarrow
  \operatorname{sgn}(e\cdot d_j)e
  \qquad(R\to\infty).
\]
Take
\[
  B=\Id-ne\otimes e.
\]
This matrix is nonzero and trace-free, with \(e^\top Be=1-n<0\).
For all sufficiently large \(R\), every transported observation is strictly
negative.
\end{proof}

\begin{corollary}[Euler and Navier--Stokes accessible one-sided degeneracy]
For every \(p\in\T^3\) and \(T>0\), the deformation \(F\) in
Theorem~\ref{thm:one-sided} may be chosen to occur as the deformation
gradient of an exact smooth periodic Euler solution and of an exact smooth
periodic Navier--Stokes solution at the material point \(p\) and time \(T\).
\end{corollary}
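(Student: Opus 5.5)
The plan is to observe that the degenerating deformation built in the proof of Theorem~\ref{thm:one-sided} is already symmetric positive definite with determinant one, and then to invoke the realization theorems proved earlier. Concretely, fix \(\mathcal D\) and choose \(e\in\sphere^{2}\) with \(e\cdot d_j\ne0\) for all \(j\). For \(R>1\) set
\[
  F_R=R^{2}e\otimes e+R^{-1}(\Id-e\otimes e),
\]
so that \(F_R\in\Sym^+(3)\cap\SL(3,\R)\). The proof of Theorem~\ref{thm:one-sided} shows that for all sufficiently large \(R\), with \(B=\Id-3e\otimes e\), every transported observation \(\tau_j(F_R)^\top B\tau_j(F_R)\) is strictly negative. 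I would fix one such \(R\) and put \(F=F_R\).

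For the dynamical part there are two routes, and I would use the first since it is closed-form.

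\emph{Route via Theorem~\ref{thm:realization}.} Apply Theorem~\ref{thm:realization} to \(F_*=F_R\), with the given \(T\) and with \(\nu=0\) and \(\nu>0\) respectively. This produces exact real-analytic periodic unforced solutions satisfying \(X(0,t)=0\) and \(\nabla_aX(0,T)=F_R\). To move the material point from the origin to the prescribed \(p\), translate the solution: \(\tilde u(x,t)=u(x-p,t)\) and \(\tilde p(x,t)=p_{\mathrm{press}}(x-p,t)\). Translation invariance of \eqref{eq:Euler-NS} on \(\T^3_{2\pi}\) gives \(\tilde X(a,t)=p+X(a-p,t)\), so \(\tilde X(p,t)=p\) and \(\nabla_a\tilde X(p,T)=F_R\). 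Analyticity implies smoothness, which is all the statement requires.

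\emph{Alternative via Theorem~\ref{thm:global-SL-realization}.} This theorem applies directly at \(p\), since \(F_R\in\SL(3,\R)\). It also gives a moving trajectory with nonvanishing velocity, if one prefers a non-stationary particle.

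There is essentially no obstacle. The only point needing care is that \(F\) in Theorem~\ref{thm:one-sided} must be chosen as the specific SPD matrix \(F_R\), rather than an arbitrary witness of the no-go, so that Theorem~\ref{thm:realization} applies. The degeneracy condition is strict and open, so there is some freedom in the choice of \(R\), but none is needed here because the realization is exact.
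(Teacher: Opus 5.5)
Your proposal is correct and follows the paper's proof: \(F_R\) lies in \(\Sym^+(3)\cap\SL(3,\R)\), so Theorem~\ref{thm:realization} realizes it at the origin for \(\nu=0\) and \(\nu>0\), and translating the realizing field from the origin to \(p\) gives the statement. Your alternative route through Theorem~\ref{thm:global-SL-realization} is also valid; it yields a moving particle rather than a fixed one.
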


\begin{proof}
The matrices \(F_R\) used in the proof are positive definite and have
determinant one.  Apply Theorem~\ref{thm:realization} and translate its
realizing field from the origin to \(p\).
\end{proof}

\begin{remark}
The corollary concerns the accessibility of the degenerating sensor geometry.
It does not assert that the algebraic witness \(B\) is simultaneously the
strain of the realizing solution at the terminal time.
\end{remark}

\subsection{A kinematic firewall for fixed-trajectory accumulation}

The action in Theorem~\ref{thm:NS-observability} places the supremum over labels
inside the time integral.  The following construction shows that the order
cannot be exchanged using incompressibility alone.

\begin{theorem}[Moving-pulse firewall]
\label{thm:kinematic-firewall}
Let \(T>0\).  There exists a divergence-free velocity field
\[
  u\in C^\infty([0,T)\times\T^3)
\]
with a volume-preserving Lagrangian flow \(X\).  Write
\[
  F(a,t)=\nabla_aX(a,t).
\]
For every nonempty finite direction system
\(\mathcal D=\{d_1,\ldots,d_M\}\subset\sphere^2\), set
\[
  \tau_j(a,t)=\frac{F(a,t)d_j}{\abs{F(a,t)d_j}},
  \qquad
  q_j(a,t)
  =
  \partial_t\log\abs{F(a,t)d_j}
  =
  \tau_j(a,t)^\top
  S(u)(X(a,t),t)\tau_j(a,t).
\]
Then
\begin{equation}\label{eq:eulerian-diverges}
  \int_0^T\norm{S(u)(t)}_{L^\infty}\,dt=+\infty,
\end{equation}
while
\begin{equation}\label{eq:trajectory-bounded}
  \sup_{a\in\T^3}
  \int_0^T
  \abs{S(u)(X(a,t),t)}\,dt<\infty.
\end{equation}
Consequently, for every nonempty finite direction system,
\[
  \sup_a\int_0^T
  \max_j
  \left|
    \frac{d}{dt}\log\abs{F(a,t)d_j}
  \right|dt<\infty.
\]
More precisely, for every robust direction system \(\mathcal D\), the two
orders of aggregation are separated sharply:
\begin{equation}\label{eq:firewall-channel-separation}
\begin{aligned}
  \int_0^T
  \esssup_{a\in\T^3}
  \left(\sum_jq_j(a,t)^2\right)^{1/2}dt
  &=\infty,\\
  \sup_{a\in\T^3}
  \int_0^T
  \left(\sum_jq_j(a,t)^2\right)^{1/2}dt
  &<\infty.
\end{aligned}
\end{equation}
The constructed velocity field is purely kinematic and is not asserted to
solve Navier--Stokes.
\end{theorem}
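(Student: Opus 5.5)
We will build \(u\) as a sum of localized strain pulses whose supports move so fast that any fixed material particle meets each pulse only during a short time window, while every pulse carries strain of size comparable to its reciprocal duration. Choose times \(t_k\uparrow T\) with disjoint intervals \(I_k=[t_k,t_{k+1})\) and \(\abs{I_k}\to0\). On \(I_k\) the field is the superposition of a spatially uniform translation \(V_k\) and a smooth divergence-free pulse \(w_k(x-V_k(t-t_k))\). Here \(w_k\) is compactly supported in a small ball \(B_{r_k}\), and its strain satisfies \(\norm{S(w_k)}_{L^\infty}=\sigma_k\). The parameters are chosen with \(\sum_k\sigma_k\abs{I_k}=\infty\); for instance \(\sigma_k\abs{I_k}=1\) for every \(k\), which already gives \eqref{eq:eulerian-diverges}. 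We take \(w_k=\nabla\times\psi_k\) with \(\psi_k\) a compactly supported bump, so that it is divergence-free. The translation does not change \(\nabla u\), and \(S(u)(\cdot,t)=S(w_k)(\cdot-V_k(t-t_k))\). Smooth temporal cutoffs near the endpoints of \(I_k\) will make \(u\in C^\infty([0,T)\times\T^3)\) (each compact subinterval of \([0,T)\) meets only finitely many \(I_k\)), and each Lagrangian map is volume preserving.

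The key trajectory estimate runs as follows. In the comoving frame of pulse \(k\), a particle moves with velocity \(-V_k+w_k\). Take \(\abs{V_k}\ge 2\norm{w_k}_{L^\infty}\) and align \(V_k\) with an irrational direction, or take it so large that a particle crosses \(B_{r_k}\) at most once during \(I_k\). Then the particle spends at most time \(\approx 2r_k/\abs{V_k}\) in the support, which gives
\[
  \int_{I_k}\abs{S(u)(X(a,t),t)}\,dt\le C\sigma_k\frac{r_k}{\abs{V_k}}.
\]
Wrapping around the torus is the issue: in time \(\abs{I_k}\) a particle traverses the torus about \(\abs{V_k}\abs{I_k}/2\pi\) times. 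We choose \(V_k\) parallel to a coordinate axis, so the comoving trajectory is nearly a closed loop of length \(2\pi\). The number of passes through the ball is then at most \(1+\abs{V_k}\abs{I_k}/2\pi\), and each pass costs at most \(C\sigma_k r_k/\abs{V_k}\). The total is therefore bounded by \(C\sigma_k r_k(\abs{V_k}^{-1}+\abs{I_k})\), which is at most \(C r_k(1+\sigma_k/\abs{V_k})\) under \(\sigma_k\abs{I_k}=1\). Taking \(\abs{V_k}\ge\sigma_k\) and \(\sum r_k<\infty\) yields \eqref{eq:trajectory-bounded} uniformly in \(a\). The condition \(\abs{V_k}\ge 2\norm{w_k}_{L^\infty}\) is compatible with this: by scaling, \(\norm{w_k}_{L^\infty}\lesssim\sigma_k r_k\), so any \(\abs{V_k}\ge\sigma_k\) suffices when \(r_k\le1\). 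The perturbation by \(w_k\) of the straight comoving line stays inside a tube of width \(O(\norm{w_k}\abs{I_k})\). We take this width smaller than \(r_k\) so that the pass count is unaffected; this is the step I expect to be most delicate. The fallback is to make passes genuinely transverse, which follows from the velocity margin \(\abs{V_k}-\norm{w_k}\ge\abs{V_k}/2\) in the \(V_k\) direction, exactly as in the half-space argument of Lemma~\ref{lem:embedded-control}.

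The channel statements then follow directly. Since \(\abs{q_j}\le\abs{S(u)(X,t)}\), \eqref{eq:trajectory-bounded} gives the bound on \(\sup_a\int\max_j\abs{q_j}\,dt\) and the second line of \eqref{eq:firewall-channel-separation}, with a factor \(\sqrt M\). For the first line we use robustness together with Lemma~\ref{lem:congruence}: \(\norm{S(u)(X(a,t),t)}_{\mathrm F}\le\alpha_{\mathcal D}^{-1/2}\kappa(F)^2(\sum_jq_j^2)^{1/2}\). Here \(\kappa(F(a,t))\) is uniformly bounded, because Gronwall applied along trajectories with \eqref{eq:trajectory-bounded} gives \(\norm{F^{\pm1}}\le\exp(\sup_a\int_0^T\abs{\nabla u(X,t)}dt)\). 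This in turn requires the trajectory bound for the full gradient \(\nabla u\), not only for \(S(u)\); it holds by the same pass-counting argument if \(\norm{\nabla w_k}_{L^\infty}\le C\sigma_k\), which is true for scaled bumps. Taking the essential supremum in \(a\), and using that \(X(\cdot,t)\) is a measure-preserving bijection so that \(\esssup_a\abs{S(u)(X(a,t),t)}=\norm{S(u)(t)}_{L^\infty}\), we get \(\esssup_a(\sum_jq_j^2)^{1/2}\ge c\norm{S(u)(t)}_{L^\infty}\). Integrating in time and applying \eqref{eq:eulerian-diverges} completes the proof.
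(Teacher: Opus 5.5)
There is a genuine gap in the proposal. The field as you define it defeats the mechanism you rely on.

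\textbf{The failing step.} On \(I_k\) you take \(u=V_k+w_k\bigl(x-V_k(t-t_k)\bigr)\), a uniform stream carrying a pulse that moves with that same stream. In the comoving variable \(y=x-V_k(t-t_k)\) one gets \(\dot y=\dot x-V_k=w_k(y)\), not \(w_k(y)-V_k\). This is exactly the Galilean boost of Remark~\ref{rem:local-SL-moving}. It leaves \(\nabla_aX\), and the strain felt by each particle, unchanged relative to the stationary pulse \(w_k\). So particles ride along with the pulse instead of crossing it.

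Because the flow of \(w_k\) preserves its support ball, a particle inside it never leaves. A particle at a stagnation point of \(w_k\) where \(\abs{S(w_k)}\) is comparable to \(\sigma_k\) collects an amount comparable to \(\sigma_k\abs{I_k}=1\) from that single pulse, not \(O(r_k)\).

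Nothing in your parameter choices stops one label from meeting every pulse this way. For example, centre every pulse at a common point \(c\), and let \(c\) be such a stagnation point of each \(w_k\) (a cut-off pure strain will do). Take \(V_k\parallel e_1\) with \(\abs{V_k}\ge\sigma_k\), chosen so that the net displacement over \(I_k\) lies in \(2\pi\mathbb Z^3\). Then the label \(c\) returns to \(c\) after every interval and gains a fixed positive amount each time. So your transit-time bound, and with it \eqref{eq:trajectory-bounded}, is not established, and is false for some admissible choices.

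\textbf{The repair.} Drop the stream and let the pulse travel through fluid at rest:
\(u=h_k(t)\,w_k\bigl(x-c_k-V_k(t-t_k)\bigr)\), with \(h_k\in C_c^\infty(I_k)\), \(0\le h_k\le1\), \(\int h_k\ge\abs{I_k}/2\). Holding the pulse fixed inside a uniform stream works equally well. Then \(\dot y=h_kw_k(y)-V_k\), so the \(e_1\)-component of \(y\) decreases at rate at least \(\abs{V_k}/2\).

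The tube-width step you flagged as delicate is then unnecessary:
\begin{itemize}
\item each slab \(\{\abs{y_1-2\pi n}<r_k\}\) is crossed in time at most \(4r_k/\abs{V_k}\), whatever the transverse drift;
\item at most \(2+\abs{V_k}\abs{I_k}\) slabs are met;
\item together with \(\norm{\nabla w_k}_{L^\infty}\le C\sigma_k\), \(\sigma_k\abs{I_k}=1\) and \(\abs{V_k}\ge\sigma_k\), this gives \(\int_{I_k}\abs{\nabla u(X(a,t),t)}\,dt\le Cr_k\) uniformly in \(a\).
\end{itemize}
Your Gronwall bound on \(\kappa\), Lemma~\ref{lem:congruence}, and the measure-preserving essential supremum then finish the argument.

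\textbf{Comparison with the paper.} Once repaired, your route genuinely differs from the paper's. The paper uses stationary pulses \(h_m(t)V_m(x)\) with \(\int h_m=1\), supported in pairwise disjoint balls of shrinking radius. Each ball is invariant under its own pulse, so every label feels at most one pulse. The bound on \(\kappa\) then comes from scale invariance of a single unit-phase flow, with no crossing-time estimate at all. Your version lets every label meet every pulse. It replaces disjointness of supports with a quantitative transit-time bound and \(\sum r_k<\infty\).
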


\begin{proof}
Choose pairwise disjoint balls \(B_m\Subset\T^3\) with radii tending to zero.
Let \(V\) be a nonzero smooth divergence-free vector field compactly supported
in the unit ball.  By translating and scaling,
\[
  V_m(x)=r_mV\left(\frac{x-x_m}{r_m}\right)
\]
is divergence-free, supported in \(B_m\), and satisfies
\[
  \norm{\nabla V_m}_{L^\infty}
  =
  \norm{\nabla V}_{L^\infty}.
\]
After replacing \(V\) if necessary, assume
\(\norm{S(V)}_{L^\infty}>0\).

Choose disjoint time intervals \(I_m\) accumulating only at \(T\), and choose
nonnegative \(h_m\in C_c^\infty(I_m)\) with
\[
  \int_{I_m}h_m(t)\,dt=1.
\]
Define
\[
  u(x,t)=h_m(t)V_m(x)\quad\hbox{for }t\in I_m,
  \qquad
  u(x,t)=0\quad\hbox{otherwise}.
\]
At every time only one summand is active.  The field is smooth on every compact
subinterval of \([0,T)\), and it is divergence-free.

For each \(m\),
\[
  \int_{I_m}\norm{S(u)(t)}_{L^\infty}\,dt
  =
  \norm{S(V)}_{L^\infty}.
\]
Summing over \(m\) proves \eqref{eq:eulerian-diverges}.

The flow of \(V_m\) is the identity outside \(B_m\) and leaves \(B_m\)
invariant because \(V_m\) is compactly supported there.  Since the balls are
disjoint, every material label is affected by at most one pulse.  Therefore
\[
  \int_0^T
  \abs{S(u)(X(a,t),t)}\,dt
  \le
  \operatorname*{ess\,sup}_x\abs{S(V)(x)}
\]
for every \(a\), proving \eqref{eq:trajectory-bounded}.  The final assertion
follows from the stretching identity \eqref{eq:stretch}.

It remains to verify the sharper channel statement.  During one pulse, the
accumulated phase belongs to \([0,1]\).  The deformation gradients of all
active scaled flows therefore satisfy
\[
  \kappa(F(a,t))\le K_V
\]
with a constant independent of \(m\), \(a\), and \(t\).  If
\(\mathcal D\) is robust, Lemma~\ref{lem:congruence}, applied at
\(x=X(a,t)\), gives
\[
  \left(\sum_jq_j(a,t)^2\right)^{1/2}
  \ge
  \alpha_{\mathcal D}^{1/2}K_V^{-2}
  \abs{S(u)(X(a,t),t)}.
\]
Because \(X(\cdot,t)\) preserves volume, taking the essential supremum in
\(a\), integrating, and summing the pulses proves the first line of
\eqref{eq:firewall-channel-separation}.  Conversely,
\[
  \left(\sum_jq_j(a,t)^2\right)^{1/2}
  \le
  \sqrt M\,\abs{S(u)(X(a,t),t)}.
\]
The one-pulse-per-label property and \eqref{eq:trajectory-bounded} prove the
second line.
\end{proof}

\begin{remark}[The PDE frontier]
Theorem~\ref{thm:kinematic-firewall} isolates the genuinely difficult
viscous strengthening.  The Euler Cauchy invariant supplies exactly the
PDE-specific step used in Theorem~\ref{thm:Euler-fixed-trajectory}; no
deterministic counterpart is presently available for Navier--Stokes:
\[
  \sup_a\int_0^T
  \left(\sum_jq_j(a,t)^2\right)^{1/2}dt<\infty
  \quad\stackrel{?}{\Longrightarrow}\quad
  \hbox{Navier--Stokes continuation}.
\]
Any proof must rule out a Navier--Stokes analogue of the moving-pulse
construction.  Neither finite-dimensional sensing nor volume preservation can
do so.
\end{remark}

\section{Concluding perspective}

The results separate four levels of information:
\begin{enumerate}
  \item static trace-free sensing needs \(N_n-1\) generic scalar observations;
  \item robustness under arbitrary volume-preserving transport needs exactly
  \(N_n\) fixed initial directions;
  \item deterministic Cauchy control for Euler needs three spanning directions
  and admits the one-sided fixed-trajectory action
  \eqref{eq:Euler-positive-action};
  \item the stochastic Navier--Stokes Cauchy formula admits the parallel
  three-direction net-deformation criterion, while its deterministic
  fixed-trajectory analogue remains open.
\end{enumerate}
The congruence-robust classification and its exact unforced deformation
realization form the main new algebraic--dynamical loop.  The global
realization theorem shows that every element of \(\SL(3,\R)\), not only the
positive cone or a neighborhood of the identity, occurs as the one-particle
endpoint derivative of an analytic periodic unforced single-shell Euler or
Navier--Stokes flow.  At low frequency, the explicit eight-dimensional
Beltrami jet gives a local endpoint chart, and the fixed-zero construction
gives every positive-definite target by a closed formula.  The generic
multipoint theorem shows that accessibility is not an isolated one-point
effect, while the low-frequency, resonant, and time-rigidity obstructions
record limits of single-eigenspace interpolation.  The icosahedral constant is
the classical optimal outer-product-frame geometry selected by this loop.
The Euler and stochastic criteria then expose why vector amplification needs
three directions whereas instantaneous symmetric-tensor reconstruction needs
six.  None of these statements resolves global regularity for Euler or
Navier--Stokes; the remaining deterministic viscous fixed-trajectory question
requires a genuinely new PDE mechanism.

\begin{appendices}

\section{The Beltrami jet matrix}\label{app:jet-matrix}

With coefficient order
\[
  (\alpha_1,\beta_1,\alpha_2,\beta_2,
   \alpha_3,\beta_3,\alpha_4,\beta_4)
\]
and target coordinates
\[
  (W_1,W_2,W_3,G_{11},G_{22},G_{12},G_{13},G_{23}),
\]
where \(G=\operatorname{sym}\nabla W(0)\in\Symz(3)\), the jet map in
Lemma~\ref{lem:Beltrami} is represented by
\[
\begin{pmatrix}
0&\frac{\sqrt2}{2}&0&-\frac{\sqrt2}{2}&0&-\frac{\sqrt2}{2}&0&\frac{\sqrt2}{2}\\
0&-\frac{\sqrt2}{2}&0&-\frac{\sqrt2}{2}&1&0&1&0\\
1&0&1&0&0&\frac{\sqrt2}{2}&0&\frac{\sqrt2}{2}\\
-\frac{\sqrt2}{2}&0&\frac{\sqrt2}{2}&0&\frac{\sqrt2}{2}&0&-\frac{\sqrt2}{2}&0\\
\frac{\sqrt2}{2}&0&-\frac{\sqrt2}{2}&0&0&0&0&0\\
0&0&0&0&0&\frac12&0&\frac12\\
0&\frac12&0&\frac12&0&0&0&0\\
0&\frac12&0&-\frac12&0&\frac12&0&-\frac12
\end{pmatrix}.
\]
Its determinant is \(-\sqrt2\).

\end{appendices}

\backmatter

\section*{Statements and Declarations}

\subsection*{Author contributions}
Hao Huang conceived the study, developed the mathematical arguments, carried
out the formal analysis, and wrote and revised the manuscript.

\subsection*{Funding}
No funding was received for conducting this study.

\subsection*{Competing interests}
The author has no relevant financial or non-financial interests to disclose.

\subsection*{Data availability}
No datasets were generated or analysed during the current study.

\bibliography{references}

\end{document}